\def\vs{\vspace{0.2cm}}
\newcommand\blfootnote[1]{%
  \begingroup
  \renewcommand\thefootnote{}\footnote{#1}%
  \addtocounter{footnote}{-1}%
  \endgroup
}
\newtheorem{lemma}{\bf Lemma}[section]
\newtheorem{corollary}{\bf Corollary}[section]
\newtheorem{theorem}{\bf Theorem}[section]
\newtheorem{definition}{\bf Definition}[section]
\newenvironment{proof}{{\noindent \bf \em Proof:}}{\hfill$\square$}
\title{Spectral methods for nonlinear functionals and functional differential equations}
\begin{document}
\begin{frontmatter}

\author[ucsc]{Daniele Venturi\corref{correspondingAuthor}}
\ead{venturi@ucsc.edu}
\author[ucsc]{Alec Dektor}

\address[ucsc]{Department of Applied Mathematics, University of California Santa Cruz, Santa Cruz, CA 95064}

\cortext[correspondingAuthor]{Corresponding author}

\journal{arXiv}

\begin{abstract}
We present a rigorous convergence analysis for cylindrical approximations of nonlinear functionals, functional derivatives, and functional differential equations (FDEs). The purpose of this analysis is twofold: first, we prove that continuous nonlinear functionals, functional derivatives and FDEs can be approximated uniformly on any compact subset of a   {real Banach space admitting a basis} by high-dimensional multivariate functions and high-dimensional partial differential equations (PDEs), respectively. Second, we show that the convergence rate of such functional approximations can be exponential, depending on the regularity of the functional (in particular its Fr\'echet differentiability), and its domain. We also provide necessary and sufficient conditions for consistency, stability and convergence of cylindrical approximations to linear FDEs. These results open the possibility to utilize numerical techniques for high-dimensional systems such as deep neural networks and numerical tensor methods to approximate nonlinear functionals in terms of high-dimensional functions, and compute approximate solutions to FDEs by solving high-dimensional PDEs. Numerical examples are presented and discussed for prototype nonlinear functionals and for an initial value problem involving a linear FDE.


\end{abstract}

\end{frontmatter}
\section{Introduction}

\blfootnote{\noindent {\em 2020 Mathematics Subject Classification.}
46N40,  
35R15,  
47J05,  	
46G05,  	
65J15.  
}
A nonlinear functional is a map from a space
of functions into the real line or the complex plane.
Such map, which seems a rather abstract 
mathematical concept, plays a fundamental role 
in many areas of mathematical physics and applied 
sciences. In fact, nonlinear functionals were used, for example, 
by Wiener to describe Brownian motion 
mathematically \cite{Wiener66}, by Hohenberg 
and Kohn \cite{Hohenberg} to reduce 
the dimensionality of the Schr\"odinger equation 
in many-body quantum systems \cite{Parr,LinLin2019}, 
by Hopf to describe the statistical properties of turbulence
\cite{Hopf,Monin2,Alankus},  and by Bogoliubov 
to model systems of interacting bosons in 
superfluid liquid helium \cite{Bogoliubov,Seiringer}. 
Applications of nonlinear functionals
to other areas of mathematical physics can be found 
in \cite{Klyatskin1,Daniele_JMathPhys,Amit,Fox,Jensen,Martin}. 

Nonlinear functionals have also appeared in 
evolution equations known as functional differential 
equations (FDEs) \cite{venturi2018numerical}.
A classical example in fluid dynamics is
the Hopf equation \cite{Hopf,Ohkitani,Rosen_1971}  
\begin{equation}
\frac{\partial \Phi([ \theta],t)}{\partial t}=
\sum_{k=1}^3\int_V\theta_k( x)\left(i \sum_{j=1}^3\frac{\partial }{\partial x_j}
\frac{\delta^2 \Phi([ \theta],t)}{\delta \theta_k( x)\delta\theta_j( x)}
+\nu \nabla^2\frac{\delta \Phi([ \theta],t)}{\delta \theta_k( x)}\right)d x, 
\label{hopfns}
\end{equation}
which governs the dynamics of the 
characteristic functional 
\begin{equation}
\Phi([ \theta],t)=\mathbb{E}\left\{\exp \left(i\int_{V}
 u (x,t)\cdot \theta(x)d x\right)\right\}.
\label{HcF}
\end{equation}
Here, $u(x,t)$ represents a stochastic solution 
to the Navier-Stokes equation corresponding 
to a random initial state, and $\mathbb{E}\{\cdot\}$ 
is the expectation over the probability measure of 
such random initial state. Remarkably, 
the complex-valued nonlinear functional \eqref{HcF} 
encodes all statistical information of the stochastic 
solution to the Navier Stokes equation. For this reason, 
equation \eqref{hopfns} was deemed 
by Monin and Yaglom (\cite[Ch. 10]{Monin2}) 
to be ``the most compact formulation of the general 
turbulence problem'', which is the problem of 
determining the statistical properties of the velocity 
and the pressure fields of the Navier-Stokes equations 
given statistical information on the initial state\footnote{
In equations \eqref{hopfns}-\eqref{HcF},
$V\subseteq \mathbb{R}^3$ is a periodic box, 
$\theta(x)=(\theta_1( x), \theta_2( x), \theta_3( x))$ 
is a vector-valued (divergence-free) function, 
and $\delta /\delta \theta_j(x)$ denotes the first-order 
functional derivative \cite{Hanggi4}.}. 
Another well-known example of functional differential 
equation is the Schwinger-Dyson equation of quantum 
field theory \cite{Peskin,Justin}. Such equation 
describes the dynamics of the generating functional 
of the Green functions of a quantum field theory, 
allowing us to propagate field interactions 
in a perturbation setting (e.g., with Feynman 
diagrams), or in a strong coupling regime. 
The Schwinger-Dyson functional formalism is also useful 
in studying statistical dynamics of classical 
systems described in terms of stochastic ordinary or 
partial differential equations\footnote{
The solution to a stochastic ordinary or 
partial differential equation is a nonlinear 
functional of the forcing terms, initial 
condition and boundary conditions. 
Effective methods to represent such functional 
dependence are based on 
polynomial chaos expansions
\cite{db1,Ernst,DanieleWick,Venturi1,Venturi_JFM2}, 
probabilistic collocation methods 
\cite{Foo1,DoostanOwhadi_2011,Db_book}, and deep neural networks \cite{Raissi1,Zhu2019}. 
Other techniques rely on a reformulation of 
the problem in terms of kinetic equations 
\cite{Venturi_JCP2013,Heyrim2017,parr_tensor_BGK},
or hierarchies of kinetic equations 
\cite{Brennan2018,Venturi_MZ,HeyrimPRS_2014}.}  
\cite{Jensen,Martin,Phythian}.
More recently, FDEs appeared 
in mean field games \cite{Carmona}, 
and mean field optimal control \cite{Ruthotto2020,Weinan2019}. 
Mean field games are optimization problems involving 
a very large (potentially infinite) number of interacting 
players. In some cases, it is possible to reformulate such 
optimization problems in terms of a nonlinear 
Hamilton-Jacobi FDE in probability density space.
The standard form of such equation is \cite{Osher2019_1}
\begin{equation}
\frac{\partial F([\rho],t)}{\partial t}+
\mathcal{H}\left([\rho],\left[\frac{\delta F([\rho],t)}{\delta \rho( x)}\right]\right)=0,
\qquad F([\rho],0)=F_0([\rho]),
\label{FDE11}
\end{equation}
where $\rho(x)$ is a $n$-dimensional probability density 
function supported on $\Omega\subseteq \mathbb{R}^n$, 
$\delta F /\delta \rho(x)$ is the first-order functional 
derivative of $F$ relative to $\rho(x)$, and $\mathcal{H}$ 
is the Hamilton functional
\begin{equation}
\mathcal{H}\left([\rho],\left[\frac{\delta F([\rho],t)}{\delta \rho( x)}\right]\right) = 
\int_{\Omega} {\Psi}\left(x,\nabla \frac{\delta F([\rho],t)}
{\delta \rho(x)} \right)\rho(x)dx + G([\rho]).
\end{equation}
Here, $\Psi$ is a Hamilton function, 
and $G([\rho])$ is an interaction potential.
More general FDEs of the type \eqref{FDE11} have 
been recently derived in the context of unnormalized 
optimal PDF transport \cite{Osher2019}.
Mean field theory is also useful in optimal feedback control 
of nonlinear stochastic dynamical systems, and 
in deep learning. For instance, recent work of W. E 
and collaborators \cite{Weinan2019} laid the 
mathematical foundations of the population risk 
minimization problem in deep learning as 
a mean-field optimal control problem. Such 
mean-field optimal control problem yields 
a generalized version of the Hamilton-Jacobi-Bellman  
equation in a Wasserstein space, 
which is a nonlinear FDE (see Eq. (20) in \cite{Weinan2019}). 
%
%

{Computing accurate approximations 
of the solution to FDEs such as \eqref{hopfns} 
or \eqref{FDE11} is a long-standing  
problem in mathematical physics. In a recent Physics 
Report \cite{venturi2018numerical}, we reviewed  
state-of-the-art methods to approximate 
nonlinear functionals and FDEs.
In particular, we discussed 
an approximation method, known as ``cylindrical 
approximation'', in which nonlinear functionals 
and FDEs defined on function spaces admitting 
a basis are approximated by multivariate 
functions and multivariate partial 
differential equations (PDEs), respectively. 
The idea is, if a function 
space admits a basis, then any function in the 
space can be represented uniquely by 
projection coefficients onto the basis. 
Accordingly, nonlinear functionals  
defined on such a function space can be 
represented as multivariate functions 
of the coefficients. 
%
%
The objective of this paper is to
provide a rigorous mathematical 
foundation for cylindrical approximations 
to nonlinear functionals, functional
derivatives, and FDEs defined on
Banach spaces admitting a basis.
The purpose of this analysis is twofold: 
first, we prove that cylindrical 
approximations converge uniformly on 
compact subsets of real Banach spaces 
admitting a basis.
Second, we prove that the convergence 
rate can be exponential in the number of 
projection coefficients.
We also provide necessary and sufficient 
conditions for consistency, stability and 
convergence of cylindrical approximations 
to FDEs based on the Trotter-Kato approximation 
theorem \cite{Guidetti2004,engel1999one}.}

This paper is organized as follows. In section
\ref{sec:NonlinearFunctionals} we briefly review 
the theory of nonlinear functionals defined on 
a Banach space, and recall the notions of continuity, 
compactness and differentiability. In section 
\ref{sec:functionalonmetric} we specialize these 
concepts to nonlinear functionals defined on a real 
separable Hilbert space $H$.
In section \ref{sec:finiteDimapprox} we introduce 
{cylindrical approximations} of nonlinear 
functionals and functional derivatives defined on a 
Hilbert space. Uniform convergence for both approximations 
is established in section \ref{sec:F} and section 
\ref{sec:FD}, provided the functional (or 
functional derivative) is defined on 
a compact subset of $H$.  We also show 
that cylindrical approximations
can converge exponentially fast for 
Fr\'echet differentiable functionals.
In section \ref{sec:FDE} we develop a 
self-consistent convergence analysis of 
cylindrical approximations to linear FDEs
in compact subsets of real separable Hilbert 
{spaces}. 
{In section \ref{sec:Banach} we 
outline the extension of the functional 
approximation theory we developed in 
Hilbert spaces to compact subsets of real Banach 
spaces admitting a basis.}
In section \ref{sec:numerics} we 
{provide numerical examples 
demonstrating convergence of cylindrical 
approximations of nonlinear 
functionals and a linear FDE. In particular, 
we study the Hopf equation corresponding 
to a linear advection problem evolving from 
a random initial state}.
The main findings are summarized in section \ref{sec:Conclusion}.
We also include two brief Appendices 
where we discuss cylindrical approximations 
of functional integrals in real separable 
Hilbert spaces, and the notion of 
distance between function spaces.

\section{Nonlinear functionals in Banach spaces}
\label{sec:NonlinearFunctionals}

Let $X$ be a Banach space. A nonlinear functional 
on $X$ is a map $F$ from $X$ into a field 
$\mathbb{F}$. In this paper, $\mathbb{F}$ will either be  
the real line ($\mathbb{R}$) or 
the complex plane ($\mathbb{C}$).  In general, the 
functional $F$ does not operate on the entire Banach 
space $X$ but rather on a subset set of $X$, which 
we denote as $D(F) \subseteq X$ (domain of the functional) 
\begin{equation}
F: D(F)\subseteq X \to \mathbb{F}. 
\label{nonlinF}
\end{equation}
As an example, consider
\begin{equation}
F([\theta])=\int_0^1 x^3 e^{\theta(x)+\theta'(x)}dx, \qquad 
\theta\in D(F)=C^{(1)}([0,1]),
\label{functional1}
\end{equation}
where $C^{(1)}([0,1])$ is the space of 
continuously differentiable real-valued functions 
defined on $[0,1]$. The map \eqref{functional1} associates 
to each function $\theta\in C^{(1)}([0,1])$ the real number $F([\theta])$. 
Analysis of nonlinear functionals in Banach spaces 
is a well developed subject \cite{Vainberg,Nashed,Schwartz1964,HunterNachtergaele2001,Folland}.
In particular, classical definitions of continuity and 
differentiability that hold for real-valued functions 
can be extended to  functionals. For instance, 
\begin{definition}
\label{def:ptwise:continuity}
{\bf (Pointwise continuity of functionals)}
A nonlinear functional 
$F: D(F)\subseteq X \to \mathbb{F}$ 
is continuous at a point 
$\theta\in D(F)$ if for any Cauchy sequence  
$\{\theta_1, \theta_2, \ldots \}$ in $D(F)$ 
converging to $\theta$ (in the metric of $X$) 
we have that the sequence 
$\{F([\theta_1]), F([\theta_1]), \ldots\}$ 
converges to $F([\theta])$ (in the metric of $\mathbb{F}$), i.e., 
\begin{equation}
\lim_{n\rightarrow \infty}
\left\|\theta_n(x)-\theta(x)\right\|_X=0 \quad \Rightarrow \quad \lim_{n\rightarrow \infty}
\left|F([\theta_n])-F([\theta])\right|= 0.
\label{continuity}
\end{equation}
\end{definition}
\vs
\noindent
{\em Example 1:} The functional \eqref{functional1} is 
continuous at every point $\theta\in C^{(1)}([0,1])$ 
relative to the norm 
\begin{equation}
\| \theta\| = \sup_{x\in[0,1]}|\theta(x)|+
\sup_{x\in[0,1]}|\theta'(x)|.
\end{equation}

\begin{definition}
\label{def:uniform_continuity}
{\bf (Uniform continuity of functionals)} The functional $F: D(F)\subseteq X \to \mathbb{F}$ is 
said to be {uniformly continuous} on the domain 
$D(F)$ if for every $\epsilon>0$ there exists $\delta>0$ 
such that the inequality $| F([\theta_1])-F([\theta_2])|<\epsilon$ holds
for all points $\theta_1,\theta_2\in D(F)$ satisfying 
$\left\|\theta_1-\theta_2\right\|_X\leq \delta$.
\end{definition}

\noindent
Note that the definition of continuity and uniform 
continuity of a functional depends on how we 
measure the distance between elements of Banach 
space $X$.

\begin{definition}
\label{def:complete_continuity}
{\bf (Compactness and complete continuity of functionals)} 
The functional $F: D(F)\subseteq X \to \mathbb{F}$ 
is said to be compact on the domain $D(F)$ 
if it maps every bounded subset of $D(F)$ into 
a pre-compact subset set 
of $\mathbb{F}$, i.e., a subset whose 
closure is compact. The functional $F([\theta])$ 
is called {completely continuous} on $D(F)$ if 
it is continuous and compact. 
\end{definition}
It is clear that a continuous functional $F([\theta])$ 
is completely continuous if and only if for every bounded 
sequence $\{\theta_n\}$ in $D(F)$ we have that 
the sequence $\{F([\theta_n])\}$ has a 
convergent sub-sequence. As is well-known, continuous 
functions defined on a closed and bounded subset 
of $\mathbb{R}^n$ are always 
uniformly continuous and bounded. This is not the case with 
functionals defined on Banach spaces. In fact, 
uniform continuity of a functional $F([\theta])$ 
on a closed and bounded set $S\subseteq D(F)$, 
say the unit sphere $S=\{\theta\in C^{(1)}([0,1]): 
\left\|\theta\right\|\leq 1\}$\footnote{Recall 
that a closed sphere $S$  in a Banach space $X$ 
is not compact as not every sequence of 
elements in the sphere has a convergent 
sub-sequence with limit in $S$.}, 
is not sufficient to guarantee that the 
functional is bounded \cite[p. 18]{Vainberg}.
However, if the functional $F([\theta])$ is uniformly 
continuous on $D(F)$ then it maps compact sets into 
compact sets. Moreover, if $F$ is completely continuous 
on a bounded set $K\subseteq D(F)$ (open or closed) 
then $F$ is bounded on $K$. This is obvious since
the definition of complete continuity of $F$ implies that 
bounded sets are mapped into relatively 
compact sets, which are bounded. 

\vs
\noindent
{\em Example 2:} Consider the nonlinear functional 
\begin{equation}
F([\theta]) = \sin(\theta(\pi))
\label{Fo}
\end{equation}
in the Banach space of Lipschitz continuous 
periodic functions in $[0,2\pi]$, 
\begin{equation}
D(F)= C_{\text{Lip}}^{(0)}([0,2\pi]).
\label{topospace}
\end{equation}
As is well known , 
the Fourier series of any element 
$\theta\in D(F)$ defines a sequence of partial sums 
$\{\theta_m\}\in D(F)$ that converge uniformly 
to $\theta$ (see \cite{Jackson}). Thanks to such uniform
convergence result, we have 
\begin{align}
 \lim_{m\rightarrow \infty}
 \left| F([\theta_m])-F([\theta])\right|&=
\lim_{m\rightarrow \infty}
\left| \sin(\theta_m(\pi))-\sin(\theta(\pi)) \right|\nonumber\\
&\leq 
\lim_{m\rightarrow \infty} \sup_{x\in[0,2\pi]}
\left| \sin(\theta_m(x))-\sin(\theta(x))\right|\nonumber \\
&= 0,
\end{align}
where the last equality follows from \cite[Theorem 10]{Bartle}.
Hence, the functional \eqref{Fo} is continuous on 
$C^{(0)}_{\text{Lip}}([0,2\pi])$. Moreover, $F$ sends any 
bounded subset of such function space into a 
pre-compact subset of 
the real line, and therefore $F$ is 
completely continuous.

\subsection{Differentials and derivatives of nonlinear functionals}

Let us consider a nonlinear functional $F: D(F)\subseteq X \to \mathbb{F}$, where $D(F)$ is an open set. We say that 
$F$ is G\^ateaux differentiable at a point $\theta\in D(F)$ 
if the limit  
\begin{equation}
dF_\eta([\theta]) = 
\lim_{\epsilon\rightarrow 0}\frac{F([\theta+\epsilon \eta])
-F([\theta])}{\epsilon}
\label{gateaux}
\end{equation}
exists and is finite for all $\eta\in D(F)$. 
The quantity $dF_\eta([\theta])$ is known 
as {\em G\^ateaux differential} of $F$ in the 
direction of $\eta$ \cite{Vainberg,Schwartz1964}. 
Under rather mild conditions (see, e.g., \cite[p. 37]{Vainberg}) 
such differential can be represented as a linear 
operator acting on $\eta$ \cite{Nashed,Daniele_JMathPhys}. 
Such linear operator is known as the 
{\em G\^ateaux derivative} of $F$ at $\theta$ 
and and it will be denoted by $F'([\theta])$
 \begin{equation}
 dF_\eta ([\theta]) = F'([\theta]) \eta.
 \label{derivativeD}
 \end{equation}
The {\em Fr\'echet differential}, on the other hand, 
is defined as the term $dF([\theta],\eta)$ in the series 
expansion
\begin{equation}
F([\theta+\eta])=F([\theta])+dF([\theta], \eta) + 
R([\theta],[\eta]),\qquad \lim_{\left\|\eta\right\|\rightarrow 0} \frac{|R([\theta],[\eta])|}{\left\|\eta\right\|}=0.
\end{equation}
It is well-known that if $F([\theta])$ 
has a continuous G\^ateaux derivative on $D(F)$, 
then $F$ is Fr\'echet differentiable on $D(F)$, and 
these two derivatives coincide \cite[p. 41]{Vainberg}. 
In this paper, we consider nonlinear 
functionals $F$ that are continuously G\^ateaux 
differentiable in $D(F)$. Hence, we will not need to 
distinguish between Fr\'echet and G\^ateaux 
derivatives. 

There has been significant research activity on 
obtaining the minimal conditions under which a nonlinear 
functional is G\^ateaux or Fr\'echet differentiable. 
It turns out that there are reasonably satisfactory 
results on G\^ateaux differentiablility of Lipschitz 
functionals. For instance, 
\begin{theorem}
\label{prop:gateaux_lipschitz}
{(\bf G\^ateaux differentiablility of Lipschitz 
functionals \cite{Mankiewicz,Aronszajn,Lindenstrauss}) } 
Let $X$ be a separable Banach space. Then every real-
or complex-valued Lipschitz functional $F$ from an 
open set $D(F)\subseteq X$  is G\^ateaux differentiable 
outside a Gauss-null set\footnote{A Gauss-null set is a 
Borel set $A\subseteq X$ such that $\mu(A)=0$ for every 
 non-degenerate Gaussian measure $\mu$ on $X$.}. 
\end{theorem}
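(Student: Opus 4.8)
The plan is to show that the set $N\subseteq D(F)$ of points at which $F$ fails to be G\^ateaux differentiable is Gauss-null, so that differentiability holds off $N$. Before worrying about nullness, I would dispose of two of the three requirements in the definition of G\^ateaux differentiability. Since $F$ is $L$-Lipschitz, every difference quotient satisfies $|(F([\theta+\epsilon\eta])-F([\theta]))/\epsilon|\le L\|\eta\|_X$, so any directional derivative $dF_\eta([\theta])$ that exists is automatically bounded by $L\|\eta\|_X$; hence boundedness (and thus continuity) of the differential in $\eta$ comes for free once linearity is known. Homogeneity, $dF_{\lambda\eta}([\theta])=\lambda\,dF_\eta([\theta])$, is immediate from \eqref{gateaux} after the substitution $\epsilon\mapsto\lambda\epsilon$. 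The entire content therefore reduces to two claims, to be established outside a Gauss-null set: (i) the directional derivative exists in a dense set of directions, and (ii) it is additive across that set.

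For a unified treatment of (i) and (ii) I would exploit finite-dimensional Rademacher differentiability on affine slices. By separability, fix a countable set $\{e_n\}\subseteq X$ whose linear span is dense. For each finite subcollection $e_{n_1},\dots,e_{n_k}$ and each base point $\theta_0$, the restriction of $F$ to the affine plane $\theta_0+\mathrm{span}\{e_{n_1},\dots,e_{n_k}\}$ is a Lipschitz function of $k$ real variables, hence differentiable in the full finite-dimensional sense Lebesgue-almost everywhere by Rademacher's theorem. At a point of such finite-dimensional differentiability the partials exist and the gradient acts linearly, so there $dF_{e_{n_i}}([\theta])$ exists and $dF_{e_{n_i}+e_{n_j}}([\theta])=dF_{e_{n_i}}([\theta])+dF_{e_{n_j}}([\theta])$. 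The exceptional set on each slice is Lebesgue-null, and the crucial step is to assemble these slice-wise null sets into a single Gauss-null set $N_0$ in $X$: a Borel set that is null on lines in the direction of each $e_n$ is Gauss-null, because a non-degenerate Gaussian measure disintegrates along any finite-dimensional subspace into conditionals absolutely continuous with respect to Lebesgue measure. Taking the countable union over all finite subcollections yields a Gauss-null $N_0$ off which all $dF_{e_n}([\theta])$ exist and are additive across $\{e_n\}$.

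It remains to pass from the dense family to every direction and to verify \eqref{gateaux} in full. For $\theta\notin N_0$ the map $\eta\mapsto dF_\eta([\theta])$ is additive and homogeneous on the dense subspace $\mathrm{span}\{e_n\}$ and bounded there by $L\|\eta\|_X$, so it extends uniquely to a bounded linear functional $F'([\theta])$ on $X$. To confirm $dF_\eta([\theta])=F'([\theta])\eta$ for \emph{every} $\eta\in X$, I would use the Lipschitz bound as an equicontinuity device: given $\eta$ and $\delta>0$, choose $\eta'\in\mathrm{span}\{e_n\}$ with $\|\eta-\eta'\|_X<\delta$ and estimate
\begin{equation}
\left|\frac{F([\theta+\epsilon\eta])-F([\theta])}{\epsilon}-F'([\theta])\eta\right|
\le 2L\delta+\left|\frac{F([\theta+\epsilon\eta'])-F([\theta])}{\epsilon}-F'([\theta])\eta'\right|,
\end{equation}
where the last term tends to $0$ as $\epsilon\to0$ because $\eta'$ lies in the good family; letting $\delta\to0$ shows the directional derivative exists in the direction $\eta$ and equals $F'([\theta])\eta$. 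For complex-valued $F$ I would apply the real-valued result to $\mathrm{Re}\,F$ and $\mathrm{Im}\,F$ separately and take the union of the two Gauss-null exceptional sets, which is again Gauss-null.

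The genuinely hard ingredient is the measure-theoretic bridge of the second paragraph, namely that slice-wise Lebesgue-null exceptional sets assemble into a Gauss-null set in the infinite-dimensional space. This is the heart of the Aronszajn--Christensen--Mankiewicz theory and cannot be reduced to a naive Fubini argument, since there is no Lebesgue measure on $X$; one must work intrinsically with the class of Gauss-null (equivalently, Aronszajn-null) sets and verify its stability under countable unions together with the disintegration property of non-degenerate Gaussian measures along finite-dimensional subspaces. Everything else---the reduction via homogeneity and the Lipschitz bound, and the density extension---is comparatively routine once this machinery is available.
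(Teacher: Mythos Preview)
The paper does not supply a proof of this theorem at all: it is stated as a classical result and attributed to the references \cite{Mankiewicz,Aronszajn,Lindenstrauss}, with no argument given. So there is nothing in the paper to compare your proof against.

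That said, your outline is essentially the classical Aronszajn--Mankiewicz strategy from those references: reduce to existence and additivity of directional derivatives along a countable dense family via Rademacher's theorem on finite-dimensional slices, assemble the slice-wise Lebesgue-null exceptional sets into a Gauss-null (equivalently Aronszajn-null) set, and then extend the linear functional from the dense span to all of $X$ using the uniform Lipschitz bound. Your density-extension step and the $2L\delta$ equicontinuity estimate are correct and standard. One point to tighten: when you write that ``a non-degenerate Gaussian measure disintegrates along any finite-dimensional subspace into conditionals absolutely continuous with respect to Lebesgue measure,'' this is only guaranteed when the relevant directions lie in the Cameron--Martin space of the measure; the equivalence of Aronszajn-null and Gauss-null sets is precisely what bridges this gap, and you are right to flag it as the genuinely deep ingredient. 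Since the paper treats the theorem as a black box from the literature, your sketch already goes well beyond what the paper requires.
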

Results on Fr\'echet differentiability are more rare, 
and usually much harder to prove \cite{Lindenstrauss}.
For instance, we have
\begin{theorem}
\label{thm:frechet_lipschitz}
{\bf (Fr\'echet derivatives of Lipschitz functionals \cite{Preiss})} 
Let $K$ be a compact subset of a Hilbert space $H$. 
If $F([\theta])$ is real-valued and locally Lipschitz on $K$ 
then $F$ is Fr\'echet differentiable on a dense subset of $K$.  
\end{theorem}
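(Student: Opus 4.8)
The plan is to follow the variational strategy of Preiss. Since this is one of the deepest results in the geometry of Banach spaces, I do not expect a short self-contained argument; instead I will sketch the architecture and indicate where the real difficulty lies. First I would remove the compactness and locality hypotheses and work with a single globally Lipschitz function. Because $K$ is compact and $F$ is locally Lipschitz on $K$, compactness supplies a global Lipschitz constant $L$ for $F$ on $K$, and the McShane-type extension $\tilde F(\theta)=\inf_{\phi\in K}\bigl(F(\phi)+L\|\theta-\phi\|\bigr)$ is then $L$-Lipschitz on all of $H$ and agrees with $F$ on $K$. The claim reduces to showing that every open ball in $H$ contains a point at which $\tilde F$ is Fr\'echet differentiable (the statement on $K$ being read through this extension). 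From here on I work with the $L$-Lipschitz function $f:=\tilde F$ and exploit two structural facts: $H$ has a separable dual (it is Asplund), and the squared norm $\|\cdot\|^2$ is $C^\infty$, so a smooth variational principle of Deville--Godefroy--Zizler type is available.

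Next I would set up the baseline and the object to be optimized. By Theorem~\ref{prop:gateaux_lipschitz} the function $f$ is G\^ateaux differentiable on a dense set, so directional derivatives $f'(x;u)$ exist at most points. The engine of the proof is to \emph{maximize the directional derivative}: one studies the supremum of $f'(x;u)$ over G\^ateaux points $x$ in a ball and unit vectors $u$, organized as a hierarchy of perturbed maximization problems at finer and finer scales. Because such a supremum is generally not attained, I would add a small, smooth, rapidly convergent perturbation series built from the smooth norm and apply the smooth variational principle to force a near-maximizer $(x_0,u_0)$ at which the perturbed directional-derivative functional attains a genuine, stable maximum.

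The crux is the upgrade from G\^ateaux to Fr\'echet at this maximizing configuration. The idea is that if $f$ failed to be Fr\'echet differentiable at $x_0$, there would be points $y$ arbitrarily close to $x_0$ for which the quotient $\bigl(f(y)-f(x_0)-f'(x_0)(y-x_0)\bigr)/\|y-x_0\|$ stays bounded away from $0$; one then extracts a direction in which the slope strictly exceeds $f'(x_0;u_0)$, contradicting maximality. Making this contradiction rigorous is exactly where the separable dual and the modulus of smoothness of the Hilbert norm are indispensable: they allow the second-order error terms generated by the perturbation to be absorbed uniformly over all directions, rather than direction by direction, which is all that G\^ateaux maximality alone would deliver.

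I expect this last step to be the main obstacle, and it is the content of Preiss's theorem that it can be carried out at all. The honest difficulty is that a single maximizing direction does not ``see'' all nearby directions simultaneously; the correct formulation maximizes a regularized functional over a whole family of perturbations and controls the associated exceptional sets, which turn out to be $\sigma$-porous. For the present paper it suffices to invoke the result as stated in \cite{Preiss}; the outline above indicates the route one would take and isolates the smooth-variational/maximal-slope mechanism as the essential ingredient.
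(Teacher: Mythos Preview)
The paper does not prove this theorem at all; it is simply quoted from \cite{Preiss} as a known deep result, with no argument given. Your outline is a faithful high-level summary of Preiss's variational strategy (maximize a regularized directional derivative via a smooth variational principle, then show that failure of Fr\'echet differentiability at the maximizer would contradict maximality), and your final paragraph in effect does exactly what the paper does: invoke \cite{Preiss}.

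That said, there is a genuine gap in your reduction step. You extend $F$ to an $L$-Lipschitz $\tilde F$ on all of $H$ and then assert that ``the claim reduces to showing that every open ball in $H$ contains a point at which $\tilde F$ is Fr\'echet differentiable.'' But in an infinite-dimensional Hilbert space a compact set $K$ has empty interior, so density of Fr\'echet points of $\tilde F$ in $H$ says nothing about points \emph{inside} $K$: an open ball in $H$ may contain a differentiability point that lies outside $K$, and there is no mechanism forcing any differentiability point of the extension to land in $K$. The parenthetical ``the statement on $K$ being read through this extension'' does not repair this; Fr\'echet differentiability of $\tilde F$ at some $\theta\notin K$ tells you nothing about $F$ on $K$. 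This is arguably an imprecision already present in the paper's phrasing of the theorem (Preiss's result is naturally stated for Lipschitz functions on open sets, not on compact sets), but as written your reduction does not go through. If you want a correct statement that matches Preiss, you should either assume $F$ is Lipschitz on an open set $U\supseteq K$ and conclude density of Fr\'echet points in $U$, or interpret differentiability on $K$ in a relative sense and argue separately why the extension's differentiability transfers.
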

Note that Theorem \ref{thm:frechet_lipschitz} 
does not imply that $F$ is Fr\'echet differentiable 
everywhere on $K$. 
On the other hand, a continuously differentiable 
functional $F$, which is also completely continuous 
in the sense of Definition \ref{def:complete_continuity}, 
has completely continuous G\^ateaux and Fr\'echet derivatives (\cite[p.51]{Vainberg}). 
As we will see in section \ref{sec:functionalonmetric}, 
continuously differentiable nonlinear functionals 
on compact metric spaces are also compact, and have 
compact Fr\'echet derivative. 
We emphasize that it is also possible to 
define G\^ateaux and Fr\'echet 
differentiability directly in terms of bounded 
linear operators. For instance, Lindenstrauss and 
Preiss \cite[p. 258]{Lindenstrauss} define $F$ as
G\^ateaux differentiable if there exists 
a bounded linear operator 
$F':D(F)\rightarrow \mathbb{F}$ such that 
for every $\eta\in D(F)$
\begin{equation}
F'([\theta])\eta = 
\lim_{\epsilon\rightarrow 0}\frac{F([\theta+\epsilon \eta])
-F([\theta])}{\epsilon}.
\nonumber 
\end{equation}
Clearly, this definition is more strict than 
\eqref{gateaux}-\eqref{derivativeD}, 
since it does not allow for unbounded derivative 
operators $F'([\theta])$.

In the context of nonlinear functionals 
defined on spaces of functions, 
it is convenient to define another type of 
functional derivative, namely 
\begin{equation}
\frac{\delta F([\theta]) }{\delta \theta(x)} = 
\lim_{\epsilon\rightarrow 0}
\frac{F([\theta(y)+\epsilon \delta (x-y)])
-F([\theta(y)])}{\epsilon},
\label{first_order_FD}
\end{equation}
provided the limit exists. 
The quantity $\delta F([\theta])/\delta \theta(x)$, is 
known as {\em first-order functional derivative} 
of $F([\theta])$ with respect to $\theta(x)$ 
\cite[p. 309]{Hanggi2}). Functional 
derivatives are used extensively in many areas 
mathematical physics, e.g., in stochastic dynamics 
\cite{Hanggi4,Venturi_PRS,Fox,Jensen}, 
turbulence modeling \cite{Monin2,Ohkitani,Dopazo}, 
and quantum  field theory \cite{Justin,Parr}.

If the Fr\'echet derivative $F'([\theta])$ admits an 
integral representation\footnote{
Conditions under which linear operators between 
spaces of functions admit an integral 
representation were investigated in 
\cite{Bukhvalov,Schachermayer,Schep,Cilia}.},
then it is possible to establish a one-to-one correspondence 
between $\delta F([\theta])/\delta \theta(x)$ 
and $F'([\theta])$. 
For instance, if $\ell(\eta) = F'([\theta])\eta$ is a bounded linear 
functional in a Hilbert space $H$, and $F'([\theta])$ 
is continuous in $D(F)\subseteq H$ then the Riesz 
representation theorem guarantees that there exists a 
unique element $\delta F([\theta])/\delta \theta(x)\in H$
such that 
\begin{equation}
F'([\theta])\eta =\left( 
\frac{\delta F([\theta]) }{\delta \theta(x)},\eta(x) 
\right)_H,  \qquad \forall \eta\in H, \qquad \forall \theta\in D(F). 
\label{first_order_L}
\end{equation}
Here $(\cdot,\cdot)_H$ denotes the inner product in $H$. 
As we will see in section \ref{sec:functionalonmetric}, the Fr\'echet 
derivative of a continuous nonlinear functional defined 
on a compact subset of a real separable Hilbert space 
is a completely continuous linear operator, i.e., 
continuous and compact. In this case, the Riesz 
representation \eqref{first_order_L} holds and we 
have a one-to-one correspondence between $F'([\theta])$ and 
$\delta F([\theta])/\delta \theta(x)$.

More generally, if $\Omega$ is a locally compact 
Hausdorff space, e.g., an open or closed subset 
of $\mathbb{R}^n$, and $F'([\theta])$
is a bounded linear operator from 
$C_c^{(0)}(\Omega)$ (space of 
compactly supported continuous functions on $\Omega$) into 
$\mathbb{R}$ then there exists a unique finite regular signed 
measure\footnote{A signed measure is a generalization of the 
concept of measure by allowing it to have negative values.} 
$\mu([\theta],x)$ on the Borel subsets of $\Omega$ such 
that
\begin{equation}
F'([\theta])\eta=\int_{\Omega} \eta(x) d\mu([\theta],x), \qquad \eta \in C_c^{(0)}(\Omega),
\label{Haus}
\end{equation} 
(see \cite[p. 324]{Semadeni}, \cite[p. 4]{Diestel} or \cite{Zakharov,Folland}).
In addition, if $\mu([\theta],x)$ is absolutely continuous with 
respect to $x$ then there exists a Radon-Nikodym derivative, 
i.e., a functional density $\delta F([\theta])/\delta \theta(x)$, such that 
\begin{equation}
 d\mu([\theta],x) = \frac{\delta F([\theta]) }{\delta \theta(x)} dx.
\end{equation}
Under these conditions, the Fr\'echet derivative of $F$ admits 
the Lebesgue integral representation 
\begin{equation}
F'([\theta])\eta=\left( \frac{\delta F([\theta]) }{\delta \theta(x)},\eta(x)\right)_{L^2(\Omega)}.
\label{FGD}
\end{equation} 
We emphasize that \eqref{first_order_L} and \eqref{FGD}
can be considered as infinite-dimensional generalizations
of the concept of directional derivative 
of a multivariate function $f(x)$, in which the 
dot product between the gradient $\nabla f$ and 
a vector $\eta$ is now replaced by the 
inner product of $\delta F([\theta])/\delta \theta(x)$
and $\eta(x)$.
By analogy, the quantity $\delta F([\theta])/\delta \theta(x)$ 
can be thought of as an infinite-dimensional gradient. 
Note that such gradient is a nonlinear 
functional of $\theta$ and a function of $x$. Higher-order 
Fr\'echet and functional derivatives can be defined 
similarly \cite{venturi2018numerical,Hanggi4}.

\vs
\noindent
{\em Example 1:}
The Fr\'echet derivative of the nonlinear functional 
\eqref{Fo} is obtained as
\begin{equation}
F'([\theta])\eta = \cos(\theta(\pi))\eta(\pi),
\label{FrD}
\end{equation}
where both $\theta$ and $\eta$ are 
in the space \eqref{topospace} of Lipschitz 
continuous periodic functions in $[0,2\pi]$. 
Clearly, equation \eqref{FrD} can be written as 
\begin{equation}
F'([\theta])\eta = \int_0^{2\pi} \cos(\theta(x))\delta(x-\pi) \eta(x) dx.
\label{r5}
\end{equation}
From this expression we see that the signed 
measure $d\mu([\theta],x)$ appearing in equation
\eqref{Haus} in this case has a density, which 
coincides with the 
first-order functional derivative
\begin{equation}
\frac{\delta F([\theta])}{\delta \theta(x)} = \cos(\theta(x))\delta(x-\pi).
\label{fffd}
\end{equation}
Such derivative is a distribution in $x$ and 
a continuous functional of $\theta$. The 
Fr\'echet differential \eqref{FrD} is a linear 
functional in $\eta$, bounded in the $C^{(0)}$ 
norm.
In fact,  
\begin{equation}
\left|\cos(\theta(\pi))\eta(\pi)\right|\leq \left|\cos(\theta(\pi))\right| 
\sup_{x\in[0,2\pi]}\left|\eta(x)\right|.
\end{equation}
However, such functional is unbounded in 
$L^2([0,2\pi])$, and therefore \eqref{r5} is not 
based on the Riesz representation 
\eqref{first_order_L}, but rather on \eqref{FGD}. 
To show this, we just need to prove that 
\eqref{FrD} admits an integral representation 
with kernel that is not in $L^2([0,2\pi])$. To this end, 
let us represent $\eta$ relative 
to any orthonormal trigonometric basis 
$\{\varphi_0,\varphi_1,\ldots\}$ of $L_p^2([0,2\pi])$ 
(space of square integrable periodic functions in $[0,2\pi]$)
\begin{equation}
\eta(x)=\sum_{k=0}^\infty a_k\varphi_k(x), \qquad a_k=(\eta,\varphi_k)_{L_p^2([0,2\pi])}.
\label{sereta}
\end{equation}
The series \eqref{sereta} converges in the 
$L_p^2([0,2\pi])$ sense, and also pointwise 
since $\eta$ is continuous \cite{Jackson}. 
A substitution of \eqref{sereta} into  \eqref{FrD} yields 
\begin{equation}
\cos(\theta(\pi))\eta(\pi) = \left(\cos(\theta(\pi))  v(x),\eta(x)\right)_{L_p^2([0,2\pi])}, \qquad 
v(x)=\sum_{k=0}^\infty \varphi_k(\pi)\varphi(x).
\label{le2}
\end{equation}
It straightforward to show that
\begin{equation}
\sum_{k=0}^\infty \varphi_k(\pi)^2 = \infty,
\end{equation}
and therefore the function $v(x)$ in \eqref{le2} is not 
an element of $L_p^2([0,2\pi])$. Indeed, $v(x)$ is the 
trigonometric series expansion of the Dirac delta 
function $\delta (x-\pi)$, which is not in $L_p^2([0,2\pi])$.

\section{Nonlinear functionals defined on compact subsets of real separable Hilbert spaces} 
\label{sec:functionalonmetric}

In this section we study  the mathematical 
properties of nonlinear functionals defined on 
compact subsets of real separable Hilbert spaces. 
As we will see in subsequent sections, this is 
a very important class of functionals which allows us 
to build an effective approximation theory 
based on orthogonal projections.

Before we present such theory, let us 
briefly review the notion of bounded, 
closed, compact and pre-compact 
sets. Consider a metric space of 
functions $X$, e.g., a Hilbert or a Banach space, 
and a subset $K\subseteq X$. 
We say that $K$ is {\em bounded} if for all $\theta \in K$ 
we have $\left\|\theta\right\|_X \leq M$, where $M$ is a finite 
real number, and $\|\cdot \|_X$ is the norm in $X$. 
The set $K$ is said to be {\em closed} if any 
convergent sequence in $K$ has a limit in $K$. 
An example of a closed 
and bounded subset of the Hilbert space 
$L^2([0,1])$ is the unit sphere 
$S=\{\theta \in L^2([0,1]): \left\|\theta\right\|_{L^2([0,1])} \leq 1\}$. 
We say that the subset $K\subseteq X$ is 
{\em compact} if every open cover of $K$ 
has a finite subcover. 

There are several equivalent 
characterizations of compactness in metric 
spaces. For instance, a subset $K$ of a 
metric space $X$ is compact 
if and only if every sequence in $K$ has a 
bounded subsequence whose limit is in $K$ 
\cite[\S 1.7]{HunterNachtergaele2001}.
The set $K$ is said to be {\em pre-compact}
if its closure is compact, meaning that every 
sequence in $K$ has a convergent 
sub-sequence whose limit is in $X$ (not in $K$). 
Closed and bounded function spaces are not necessarily 
compact, since we can define sequences that do not have 
convergent sub-sequences. An example is the the unit
sphere $S$ mentioned above. 
On the other hand, a compact set 
is always bounded and closed.
A useful characterization of pre-compactness in 
real separable Hilbert spaces is the following:
\begin{theorem} 
\label{compactSets}
{\bf (Compact subsets of real separable Hilbert spaces)} 
A subset $E$ of a real separable Hilbert space $H$ 
is pre-compact if and only if it is bounded, closed, and 
for any (one) orthonormal basis $\{\varphi_1,\varphi_2,\ldots\}$ 
of $H$, and any $\epsilon>0$ there exists a natural number $m$ 
such that\footnote{The condition \eqref{equi} is known as 
{\em equi-small tail condition} with respect to any 
orthonormal basis of $H$. It states that 
if we use enough basis elements, then we can bound the 
complementary energy (sum of the neglected Fourier 
amplitudes) associated with the series expansion of  
of {any} $\theta\in E$.}
\begin{equation}
\sum_{k=m+1}^\infty |(\theta,\varphi_k)_H|^2 \leq \epsilon,
\label{equi}
\end{equation}
for all $\theta \in E$. The closure of $B$ in $H$, 
which we denote as $\overline{E}$, is compact.
\end{theorem}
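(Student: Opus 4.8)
The plan is to reduce the statement to the classical characterization of relatively compact subsets of the sequence space $\ell^2$. Fixing the orthonormal basis $\{\varphi_1,\varphi_2,\ldots\}$, the coordinate map $\theta \mapsto \{(\theta,\varphi_k)_H\}_{k\geq 1}$ is an isometric isomorphism of $H$ onto $\ell^2$, so a subset $E\subseteq H$ is pre-compact exactly when its image is pre-compact in $\ell^2$. I would nonetheless argue directly in $H$, using the sequential description of pre-compactness recalled above, namely that $E$ is pre-compact if and only if every sequence in $E$ has a subsequence converging in $H$, and prove the two implications separately. Boundedness of a pre-compact set is immediate, and the final assertion that $\overline{E}$ is compact is just the definition of pre-compactness, so the substance lies in the equivalence between pre-compactness and the conjunction of boundedness and the equi-small tail condition \eqref{equi}.

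For the sufficiency direction I would take an arbitrary sequence $\{\theta_n\}\subseteq E$ and write $c_k^{(n)}=(\theta_n,\varphi_k)_H$. Boundedness gives $|c_k^{(n)}|\leq M$ for all $k,n$, so for each fixed $k$ the scalar sequence $\{c_k^{(n)}\}_n$ is bounded; a diagonal extraction then produces a subsequence $\{\theta_{n_j}\}$ along which every coordinate $c_k^{(n_j)}$ converges as $j\to\infty$. The task is to promote this coordinatewise convergence to convergence in the norm of $H$. Given $\epsilon>0$, I would invoke \eqref{equi} to select $m$ with $\sum_{k=m+1}^\infty |c_k^{(n)}|^2\leq\epsilon$ uniformly in $n$, split the square norm $\|\theta_{n_j}-\theta_{n_l}\|_H^2=\sum_{k=1}^\infty |c_k^{(n_j)}-c_k^{(n_l)}|^2$ at the index $m$, bound the finite head by coordinatewise convergence for $j,l$ large, and bound the infinite tail by the uniform estimate. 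This shows $\{\theta_{n_j}\}$ is Cauchy, and completeness of $H$ supplies a limit, establishing pre-compactness.

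For the necessity direction, boundedness holds because pre-compact sets are bounded, so only the tail condition \eqref{equi} requires work, and I would argue it by contradiction. If \eqref{equi} failed there would exist $\epsilon_0>0$ and, for each $m$, an element $\theta_m\in E$ with $\sum_{k=m+1}^\infty |(\theta_m,\varphi_k)_H|^2>\epsilon_0$. Pre-compactness yields a subsequence $\theta_{m_j}\to\theta^\ast$ in $H$. Since $\theta^\ast\in H$ its coefficients are square-summable, so there is $N$ with $\sum_{k=N+1}^\infty |(\theta^\ast,\varphi_k)_H|^2$ arbitrarily small; a triangle inequality on the tail sums, using that the norm of a truncated tail is dominated by $\|\theta_{m_j}-\theta^\ast\|_H$, then contradicts the lower bound $\epsilon_0$ once $m_j$ exceeds $N$ and $\theta_{m_j}$ is close to $\theta^\ast$.

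I expect the main obstacle to be enforcing uniformity in the sufficiency direction. The diagonal argument only guarantees convergence one coordinate at a time, and it is precisely the equi-small tail hypothesis that converts this into uniform control of the full norm; the care required is in keeping the head and tail error budgets separate and independent of the indices $j,l$. A parallel subtlety appears in the necessity argument, where the index $N$ beyond which $\theta^\ast$ has a small tail is fixed while the offending indices $m_j\to\infty$, so one must align the conditions $m_j>N$ and $\theta_{m_j}\approx\theta^\ast$ to close the contradiction.
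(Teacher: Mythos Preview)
Your argument is correct and is the standard proof of this classical characterization. The paper does not supply a proof of its own but refers to Proposition~3.8 in Melrose's notes \cite{Melrose}, where the argument proceeds exactly as you describe: a diagonal extraction yields coordinatewise convergence along a subsequence, and the equi-small tail hypothesis upgrades this to norm convergence by splitting $\|\theta_{n_j}-\theta_{n_l}\|_H^2$ at the cutoff index $m$; the necessity direction is likewise handled by extracting a convergent subsequence and comparing tails. One incidental remark: the word ``closed'' in the theorem statement appears to be a slip (pre-compact sets need not be closed), and you are right to set it aside and prove the genuine equivalence between pre-compactness and boundedness together with the uniform tail condition \eqref{equi}.
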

The proof of Theorem \ref{compactSets} can be 
found in \cite[p. 76]{Melrose} (Proposition 3.8). 
Note that neither pre-compact nor compact subsets 
of a vector space can be vector spaces, since 
pre-compactness implies boundedness. Hereafter 
we provide two simple examples of compact 
subsets of real separable Hilbert spaces.

\vs
\noindent
{\em Example 1:} 
Let $H_p^{s}([0,2\pi])$ be the Sobolev space of weakly 
differentiable (up to degree $s\geq 1$) periodic 
functions in $[0,2\pi]$.  The set 
\begin{equation}
K = \overline{\left\{\theta\in H_p^{s}([0,2\pi])\,:\, \left\|\frac{d^q\theta}{dx^q}\right\|_{L_p^{2}([0,2\pi])}\leq\rho \right\}}\subseteq  
L_p^{2}([0,2\pi]) \qquad 1\leq q\leq s,
\label{Sobolevsphere0}
\end{equation}
where $L_p^{2}([0,2\pi])$ is the 
Lebesgue space of periodic functions 
in $[0,2\pi]$ and $\rho>0$ is 
the radius of the Sobolev sphere, is a compact subset 
of $L_p^{2}([0,2\pi])$. Indeed, by expanding an 
arbitrary element $\theta \in K$ in a Fourier series 
we obtain, for any $1\leq q\leq s$ 
(see \cite[p. 35]{spectral})
\begin{equation}
\sum_{|k|>m}|(\theta,e^{ikx})_{L_p^2([0,2\pi])}|^2 \leq 
\frac{C_1}{m^{2q}}\left\| \frac{d^q\theta}{dx^q}\right\|^2_{L_p^2([0,2\pi])}\leq \frac{C_1\rho^2}{m^{2q}}  \qquad \forall \theta \in K.
\label{rhsL2}
\end{equation} 
At this point it is clear that for any given 
$\epsilon>0$ there exists a natural number $m$ 
such that the right hand side of \eqref{rhsL2} 
can be made smaller than $\epsilon$ for 
any $\theta\in K$. In other words, the equi-small 
tail condition \eqref{equi} is satisfied. Moreover, 
in \eqref{Sobolevsphere0} we take the closure 
of the Sobolev sphere in $L_p^2([0,2\pi])$, which makes 
$K$ is a compact subset of $L_p^2([0,2\pi])$.

\vs
\noindent
{\em Example 2:} A closed sphere with radius $\rho$ 
in $H_w^s([-1,1])$  (weighted Sobolev space of degree $s$),
is a pre-compact subset of $L_w^2([-1,1])$ (weighted 
Lebesgue space of functions in $[-1,1]$). Hence,  
\begin{equation}
K=\overline{\left\{\theta\in H_w^s([-1,1]) \,:\, 
\left\| \theta\right\|_{H_w^s([-1,1])}\leq \rho\right\}}
\subseteq L_w^2([-1,1])
\label{Sobolevsphere}
\end{equation} 
is a compact subset of $L_w^2([-1,1])$. This claim is based 
on the following well-known spectral 
convergence result \cite[p. 109]{spectral},
\begin{equation}
\sum_{k=m+1}^{\infty}|(\theta,\varphi_k)_{L^2_w([-1,1])}|^2 \leq 
\frac{C_2}{m^{s}}\left\| \theta\right\|^2_{H_w^s([-1,1])},
\label{33}
\end{equation}
where $\varphi_k(x)$  here are ultra-spherical 
polynomials. By combining \eqref{33} with 
\eqref{Sobolevsphere}, we see that the equi-small 
tail condition 
\begin{equation}
\sum_{k=m+1}^{\infty}|(\theta,\varphi_k)_{L^2_w([-1,1])}|^2 \leq 
\frac{C_2\rho^2}{m^{s}}
\label{34}
\end{equation}
is satisfied $\forall\theta\in K$. 

\vs 

The compact subsets we discussed in 
Example 1 and Example 2 are particular instances 
of a general compact embedding result known as 
Rellich-Kondrachov theorem \cite[\S 6]{Adams}. 
Such theorem states that the Sobolev space 
of functions $W^{k,p}(\Omega)$ defined on a 
compact subset $\Omega\subseteq\mathbb{R}^n$ 
with differentiable boundary is compactly 
embedded in $W^{l,q}(\Omega)$, provided $k>l$ and 
$k-p/n>l-q/n$. 
This means that there exists a compact linear operator 
$T:W^{k,p}(\Omega)\rightarrow W^{l,q}(\Omega)$ 
that maps bounded subsets of $W^{k,p}(\Omega)$ 
into pre-compact subsets of $W^{l,q}(\Omega)$. 
In Example 1 and Example 2 we have that $H^s = W^{s,2}$ 
and $L^2= W^{0,2}$  Hence, the Rellich-Kondrachov 
embedding in this case reduces to the statement 
that the Sobolev space $H^s$ is compactly embedded 
in $L^2$ for all $s>0$.

Continuous nonlinear functionals defined on a 
compact subset $K$ of a metric space have 
nice mathematical properties. First of all, 
they are bounded since they map compact sets into 
a closed and bounded subset of $\mathbb{R}$ or $\mathbb{C}$. 
Moreover, by the Heine-Cantor Theorem we have that 
any continuous functional defined on a compact set 
is necessarily uniformly continuous and 
bounded \cite{HunterNachtergaele2001}.
If the functional is real-valued this means that 
the maximum and the minimum are attained 
at points within $K$. We also recall that closed subsets 
of compact sets are necessarily compact. Hence, a continuous 
functional on $K$ maps any 
closed subset of $K$ into a closed and bounded 
subset of $\mathbb{R}$ or $\mathbb{C}$.
Such functional is necessarily 
compact\footnote{We recall that a 
compact nonlinear functional is a continuous 
functional that maps bounded sets into 
pre-compact (relatively compact) 
sets.}, i.e., completely continuous 
(see Definition \ref{def:complete_continuity}).

\subsection{Fr\'echet and functional derivatives}
Next, we show that the Fr\'echet derivative of 
a continuous nonlinear functional defined 
on a compact subset of a real separable Hilbert 
space is a compact linear operator.
\begin{lemma}
\label{COMPFRE}
{\bf (Compactness of the Fr\'echet derivative)} 
Let $K$ be a compact subset of a real separable 
Hilbert space $H$,
and let $F([\theta])$ be a continuous 
real- or complex-valued 
functional on $H$. If the Fr\'echet 
derivative $F'([\theta^*])$ 
exists at $\theta^*\in K$, then it is a compact linear 
operator\footnote{If the Fr\'echet derivative 
is {\em defined} to be a bounded linear operator, e.g., as in  
\cite{Lindenstrauss}, then Lemma \ref{COMPFRE} is 
trivial since any bounded linear operator from $H$
to $\mathbb{F}$ is compact (the dimension of the 
co-domain being finite).}.
\end{lemma}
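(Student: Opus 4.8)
The plan is to separate the statement into two parts: first reduce compactness to boundedness, then establish boundedness from the Fr\'echet expansion. Since the co-domain $\mathbb{F}$ is finite-dimensional ($\mathbb{R}$, or $\mathbb{C}\cong\mathbb{R}^2$), a linear functional $L=F'([\theta^*]):H\to\mathbb{F}$ is compact precisely when it is bounded: if $\|L\|<\infty$ then $L$ maps the closed unit ball of $H$ into a bounded subset of $\mathbb{F}$, which is pre-compact by the Heine--Borel theorem, so $L$ is compact; conversely, a compact $L$ sends the unit ball to a pre-compact, hence bounded, set. Thus the genuine content of the lemma --- exactly the point flagged in the footnote --- is to show that, when $F'([\theta^*])$ is defined through the G\^ateaux/Fr\'echet construction of \eqref{gateaux}--\eqref{derivativeD} (which a priori permits an unbounded linear operator), Fr\'echet differentiability together with continuity of $F$ forces $F'([\theta^*])$ to be bounded.

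To prove boundedness I would exploit the remainder estimate in the definition of the Fr\'echet differential. Writing $R(\eta)=F([\theta^*+\eta])-F([\theta^*])-F'([\theta^*])\eta$, the condition $\lim_{\|\eta\|\to0}|R(\eta)|/\|\eta\|=0$ gives, with the choice $\epsilon=1$, a radius $\delta_1>0$ such that $|R(\eta)|\le\|\eta\|_H$ whenever $\|\eta\|_H\le\delta_1$. Independently, continuity of $F$ at $\theta^*$ supplies a radius $\delta_2>0$ with $|F([\theta^*+\eta])-F([\theta^*])|\le1$ for $\|\eta\|_H\le\delta_2$. Setting $\delta=\min(\delta_1,\delta_2)$ and applying the triangle inequality to $F'([\theta^*])\eta=\big(F([\theta^*+\eta])-F([\theta^*])\big)-R(\eta)$ yields the uniform bound $|F'([\theta^*])\eta|\le 1+\|\eta\|_H\le 1+\delta$ for every $\eta$ in the ball $\{\|\eta\|_H\le\delta\}$.

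The final step is a standard homogeneity (rescaling) argument: for arbitrary nonzero $\eta\in H$ set $\tilde\eta=(\delta/\|\eta\|_H)\,\eta$, so that $\|\tilde\eta\|_H=\delta$, and linearity of $F'([\theta^*])$ gives $|F'([\theta^*])\eta|=(\|\eta\|_H/\delta)\,|F'([\theta^*])\tilde\eta|\le\big((1+\delta)/\delta\big)\|\eta\|_H$. Hence $F'([\theta^*])$ is bounded with $\|F'([\theta^*])\|\le(1+\delta)/\delta$, and by the reduction of the first paragraph it is compact. I expect the only delicate point to be resisting the temptation to bound the increment $|F([\theta^*+\eta])-F([\theta^*])|$ by a multiple of $\|\eta\|_H$: since $F$ is merely continuous (not assumed Lipschitz), no such linear bound is available, and the argument must instead use only the \emph{local boundedness} of $F$ near $\theta^*$ together with the $o(\|\eta\|)$ remainder, letting linearity propagate the small-ball estimate to all of $H$. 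The compactness of $K$ and the hypothesis $\theta^*\in K$ play no role beyond fixing the point of differentiation, since the whole argument localizes to a neighborhood of $\theta^*$, where continuity of $F$ on $H$ is all that is required.
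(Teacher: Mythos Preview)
Your proof is correct and takes a genuinely different route from the paper's. The paper argues by contradiction: assuming $F'([\theta^*])$ is not compact, it produces a bounded sequence $\{\theta_k\}$ with $|F'([\theta^*])(\theta_k-\theta_j)|\ge\epsilon$ for $k\neq j$, and then uses the Fr\'echet expansion to show that $\{F([\theta^*+\tau\theta_k])\}$ has no convergent subsequence, contradicting the complete continuity of $F$ on the compact set $K$. Your argument is instead direct and more elementary: you bound $F'([\theta^*])\eta$ on a small ball using only continuity of $F$ at $\theta^*$ and the $o(\|\eta\|)$ remainder, then rescale by linearity. Your approach has the advantage of making transparent that neither the compactness of $K$ nor the membership $\theta^*\in K$ is actually needed---continuity of $F$ at the single point $\theta^*$ suffices---whereas the paper's argument invokes complete continuity on $K$ and must arrange that $\theta^*+\tau\theta_k\in K$, which is not automatic for an arbitrary compact $K$. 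The paper's route, on the other hand, is phrased so as to generalize more readily to operators with infinite-dimensional codomain, where ``bounded'' and ``compact'' no longer coincide and one genuinely needs a sequential compactness argument rather than a norm bound.
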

\begin{proof}
Continuous functionals on compact metric spaces 
are necessarily completely continuous (see Definition \ref{def:complete_continuity}). To prove the Lemma 
we proceed by contradiction. To this end, 
suppose that $F'([\theta^*])$ is not compact. Then it is 
possible to find $\epsilon > 0$ and a sequence 
$\{\theta_k\}\in K\subseteq H$ such that $\|\theta_k\|_H\leq 1$ and 
\begin{equation}
\left| F'([\theta^*])\theta_k -F'([\theta^*])\theta_j  \right| \geq \epsilon
\end{equation} 
for all $k\neq j$. By definition of Fr\'echet derivative 
at $\theta^*$ 
we have  
\begin{equation}
\left| F([\theta^*+\eta]) - F([\theta^*]) - 
F'([\theta^*])\eta)\right|= 
{o(\left\|\eta\right\|_H)},
\end{equation}
for all $\eta\in K$ with reasonably small norm, 
say $\left\|\eta\right\|_H\leq \delta$.
In particular, we can choose $\delta$ such that 
\begin{equation}
\left| F([\theta^*+\eta]) - F([\theta^*]) - 
F'([\theta^*])\eta \right|
\leq \frac{\epsilon }{4} \left\|\eta\right\|_{H}. 
\end{equation} 
Next, choose $\tau$ small enough so 
that $(\theta^*+\tau \theta_k)\in K$ 
and  $\left\|\tau \theta_k\right\|\leq \delta$ 
for all $k\in \mathbb{N}$. For such functions we have
\begin{align}
\left| F([\theta^*+\tau \theta_k])-F([\theta^*+\tau \theta_j]) \right|\geq& 
\left| \tau F'([\theta^*])(\theta_k-\theta_j)\right| - \nonumber \\
& \left| F([\theta^*+\tau \theta_k])-F([\theta^*])- 
\tau F'([\theta^*])\theta_k \right|-\nonumber\\
&\left| F([\theta^*+\tau \theta_j])-F([\theta^*])-
\tau F'([\theta^*])\theta_j\right|\nonumber\\
\geq & \epsilon\tau -\frac{\tau\epsilon}{4}-\frac{\tau\epsilon}{4} \nonumber\\
= & \frac{\epsilon\tau}{2}.
\label{imp}
\end{align}
This means that the functional $F$ is not completely 
continuous. In fact, the inequality \eqref{imp} implies 
that it is not possible to extract a convergent 
sub-sequence from the sequence 
$\{F([\theta^*+\tau \theta_k])\}$, with $\theta_k$ 
bounded. This proves the Lemma. 

\end{proof}

\begin{lemma}
\label{COMPFRE1}
{\bf (Representation of the Fr\'echet derivative)} 
Let $K$ be a compact subset of a real separable 
Hilbert space $H$, and let $F([\theta])$ be a 
continuous real- or complex-valued functional on $K$.
If the Fr\'echet derivative of $F([\theta])$ 
exists at $\theta^*\in K$ then $F'([\theta^*])$ 
admits the unique integral representation
\begin{equation}
F'([\theta^*])\eta = \left(\frac{\delta F([\theta^*])}
{\delta \theta(x)},\eta(x)\right)_H\qquad \forall \eta\in H,
\label{Riesz}
\end{equation}
where $\delta F([\theta^*])/\delta \theta(x)\in H$ is 
the first-order functional derivative \eqref{first_order_FD}. 
\end{lemma}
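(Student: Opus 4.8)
The plan is to reduce the statement to the Riesz representation theorem, using Lemma \ref{COMPFRE} to supply the hypothesis that theorem requires. First I would observe that, by Lemma \ref{COMPFRE}, the Fréchet derivative $F'([\theta^*])$ is a compact linear operator from $H$ into $\mathbb{F}$; in particular it is bounded, since every compact linear operator is bounded. Hence $\ell(\eta) := F'([\theta^*])\eta$ is a bounded linear functional on the real separable Hilbert space $H$, which is exactly the setting in which the Riesz theorem applies. Note that this is a pointwise statement at $\theta^*$, so no assumption on $F'$ away from $\theta^*$ is needed.

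Next I would invoke the Riesz representation theorem to produce the representer. In the real-valued case ($\mathbb{F}=\mathbb{R}$) it yields directly a unique element $g\in H$ with $F'([\theta^*])\eta = (g,\eta)_H$ for every $\eta\in H$. In the complex-valued case ($\mathbb{F}=\mathbb{C}$) I would write $F'([\theta^*]) = \ell_1 + i\,\ell_2$ with $\ell_1,\ell_2$ bounded real-linear functionals on $H$, apply the Riesz theorem to each to obtain representers $g_1,g_2$, and set $g = g_1 + i g_2$ (an element of the complexification of $H$). Defining $\delta F([\theta^*])/\delta\theta(x) := g(x)$ then gives the representation \eqref{Riesz}, and the uniqueness assertion follows immediately from the uniqueness clause of the Riesz theorem.

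The remaining task, which I expect to be the main obstacle, is to verify that the Hilbert-space element $g$ delivered by Riesz actually coincides with the first-order functional derivative as defined in \eqref{first_order_FD}. The difficulty is that \eqref{first_order_FD} is written in terms of the Dirac delta $\delta(x-y)$, which is not an element of $H$ and therefore cannot be inserted literally into the inner product $(g,\,\cdot\,)_H$. Formally, substituting $\eta(y)=\delta(x-y)$ into \eqref{Riesz} gives $F'([\theta^*])\delta(\cdot - x) = (g,\delta(\cdot - x))_H = g(x)$, which is precisely the right-hand side of \eqref{first_order_FD}; to make this rigorous one interprets \eqref{first_order_FD} through an approximating family $\delta_n(\cdot - x)\in H$ converging to $\delta(\cdot - x)$, evaluates \eqref{Riesz} on $\delta_n(\cdot - x)$, and passes to the limit. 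Everything else in the argument, namely boundedness from compactness and the existence and uniqueness of $g$, is a direct consequence of Lemma \ref{COMPFRE} together with the Riesz theorem; only the distributional identification of $g$ with the density in \eqref{first_order_FD} requires the limiting interpretation just described.
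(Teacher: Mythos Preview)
Your proposal is correct and follows essentially the same approach as the paper: invoke Lemma~\ref{COMPFRE} to conclude that $F'([\theta^*])$ is compact, hence bounded, and then apply the Riesz representation theorem to produce the unique representer. Your treatment is in fact more careful than the paper's, which neither separates the complex-valued case nor addresses the identification of the Riesz representer with the distributional functional derivative \eqref{first_order_FD}; the paper simply \emph{defines} $\delta F([\theta^*])/\delta\theta(x)$ to be the Riesz element and stops there.
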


\begin{proof}
We have seen in Lemma \ref{COMPFRE} that the 
Fr\'echet derivative of a continuous nonlinear functional defined 
on a compact subset of a real separable Hilbert space is a 
compact linear operator. Hence, $\ell([\eta])=F'([\theta^*])\eta$ 
is a bounded linear functional in $H$. By applying the Riesz 
representation theorem \eqref{Riesz} to $\ell$, 
we conclude that there exists a unique 
element $\delta F([\theta^*])/\delta \theta(x)\in H$ 
such that \eqref{Riesz} holds. This proves the Lemma.

\end{proof}

\vs
\noindent 
{\em Example 1:} Let $H=L^2_p([0,2\pi])$ and 
$K$ be the Sobolev sphere \eqref{Sobolevsphere0}, 
which includes its closure in {$L^2_p([0,2\pi])$}.
Consider the nonlinear functional 
\begin{equation}
F([\theta]) = \int_0^{2\pi} \sin(x) \sin^2(\theta(x))dx, \qquad 
\theta\in K.
\label{fun0}
\end{equation}
The Fr\'echet derivative of \eqref{fun0} is
\begin{equation}
F'([\theta]) \eta=
\int_{0}^{2\pi} \sin(x) \sin(2\theta(x))\eta(x)dx,
\qquad \eta \in H.
\end{equation}
For fixed $\theta\in K$, the linear 
functional $F'([\theta]) \eta$ 
is bounded in {$L^2_p([0,2\pi])$}. In fact, 
by the Cauchy-Schwarz inequality we have
\begin{align}
\left|F'([\theta]) \eta\right| \leq &
\underbrace{\left\|\sin(x) \sin(2\theta(x))
\right\|_{L^2_p([0,2\pi])}}_{\left\|F'([\theta])\right\|}
\left\|\eta\right\|_{L^2_p([0,2\pi])}, \qquad 
\left\|F'([\theta])\right\|\leq \sqrt{\pi},
\end{align} 
for all $\eta\in H$. Therefore, for each $\theta\in K$ 
Lemma \ref{COMPFRE} holds, i.e., there exists a 
unique first-order functional derivative
\begin{equation}
\frac{\delta F([\theta])}{\delta \theta(x)}=\sin(x) 
\sin(2\theta(x)),
\label{fd4}
\end{equation}
which is an element of $L^2_p([0,2\pi])$ 
(as a function of $x$).

\section{Cylindrical approximation of nonlinear functionals in real separable Hilbert spaces}
\label{sec:finiteDimapprox}

Let $H$ be a real separable Hilbert space with 
inner product $(\cdot,\cdot)_H$. Any element 
$\theta\in H$ can be represented uniquely in terms 
of an orthonormal basis $\{\varphi_1,\varphi_2,\ldots\}$ as 
\begin{equation}
\theta(x)=\sum_{k=1}^\infty a_k\varphi_k(x),\qquad 
a_k=(\theta,\varphi_k)_H, 
\label{Theta}
\end{equation} 
where the series converges in the norm $\|\cdot\|_H$ induced 
by the inner product $(\cdot,\cdot)_H$.
We introduce the projection operator $P_m$, which 
truncates the series expansion \eqref{Theta} to $m$ terms 
\begin{equation}
\label{proj_op}
P_m \theta = \sum_{k=1}^m (\theta,\varphi_k)_H\varphi_k.
\end{equation} 
Clearly, $P_m$ is an operator from $H$ into the 
finite-dimensional space 
\begin{equation}
D_m = \textrm{span}\{\varphi_1,...,\varphi_m\}.
\label{Dspan}
\end{equation}
With this notation we can represent any nonlinear 
functional $F([\theta])$ in $H$ as a function depending 
on an infinite (countable) number of variables. To this 
end, we substitute \eqref{Theta} into $F([\theta])$ 
to obtain
\begin{equation}
f(a_1,a_2,\ldots ) = F\left(\left[\sum_{k=1}^\infty a_k\varphi_k(x)\right]\right), \qquad {a_k=(\theta,\varphi_k)_H, \qquad \theta\in H.}
\label{functionalFD1}
\end{equation}
A simple way to approximate this functional is to 
restrict its domain to {the range of the 
projection $P_m$, which is the finite-dimensional 
space $D_m$ in \eqref{Dspan}}. 
This reduces $F([\theta])$ to a multivariate 
function $f(a_1,\ldots,a_m)$, which depends on 
as many variables as the number of basis elements 
of $D_m$. Specifially, we have
\begin{equation}
f(a_1,\ldots,a_m)=F\left(\left[\sum_{k=1}^m a_k \varphi_k \right]\right), \qquad {a_k=(\theta,\varphi_k)_H, \qquad \theta\in H.}
\label{functionalFD}
\end{equation}
In the theory of stochastic processes, the set 
\begin{equation}
 \left\{\theta\in H:\, ((\theta,\varphi_1)_H,...,(\theta,\varphi_m)_H)\in B\right\},
\end{equation}
{where $B$ is a Borel set of $\mathbb{R}^m$}, is known as {cylindrical set} (see \cite[p. 55]{Neerven} 
or \cite[p. 45]{Skorokhod}). Therefore, functionals 
of the form \eqref{functionalFD}, i.e., 
\begin{equation} 
f\left( (\theta,\varphi_1)_H, (\theta,\varphi_2)_H, \ldots, 
 (\theta,\varphi_m)_H \right)=F([P_m\theta]), \qquad \theta\in H, 
 \label{cyl}
\end{equation}
{where $f$ is a multivariate function,
are often referred to as cylindrical (or cylinder) 
functionals\footnote{{
In \cite[p. 336]{Friedrichs} and \cite[Ch. 1]{Friedrichs_Book} 
Friedrichs and Shapiro defined {cylinder functionals} 
in a real separable Hilbert space $H$ as those functionals 
which depend on their argument $\theta\in H$ only in as much as 
they depend on $P_m \theta$, where $P_m$ is an orthogonal 
projection on $H$. In other words, 
a cylinder functional on $H$ is a functional 
$F$ such that $F([\theta])=F([P_m\theta])$ for 
all $\theta\in H$. Clearly, \eqref{cyl} is a cylinder 
functional.}}
\cite{Baez,Friedrichs}.} 
{Such functionals play 
a fundamental role, e.g., in the approximation of functional 
integrals arising in quantum field theory \cite{Combe,Justin} 
(see also \ref{sec:functionalintegral}).
\begin{definition}
{(\bf Cylindrical approximation of nonlinear functionals)} 
Let $H$ be a real 
separable Hilbert space, $P_m$ the projection 
operator \eqref{proj_op}, and $F$ a nonlinear 
functional on $H$. We will call $F([P_m\theta])$ 
cylindrical approximation of $F([\theta])$.   
\end{definition}}

\noindent
We will see in section \ref{sec:F} 
that $F([P_m \theta])$ (Eq. \eqref{cyl})
converges uniformly to $F([\theta])$ 
(Eq. \eqref{functionalFD1}) as $m$ goes to infinity 
in any compact subset of a real separable 
Hilbert space $H$.

Next, we study the representation of the Fr\'echet and 
the first-order functional derivatives. 
If $F([\theta])$ is Fr\'echet differentiable 
at $\theta\in H$ with continuous 
Fr\'echet derivative $F'([\theta])$, then 
$\ell(\eta)=F'([\theta])\eta$ is a bounded linear functional. 
Hence, by Riesz's representation theorem, 
there exists a unique element of $H$, which we 
denoted by $\delta F([\theta])/\delta \theta(x)$, 
such that
\begin{equation}
F'([\theta])\eta=\left(\frac{ \delta F\left(\left[\theta \right]\right)}
{\delta \theta(x)},\eta\right)_{H}.
\label{fff1}
\end{equation}
As we pointed out in Lemma \ref{COMPFRE1}, 
$\delta F([\theta])/\delta \theta(x)$ coincides with the 
first-order functional derivative \eqref{first_order_FD}.
Such derivative is an element of $H$, and therefore 
it can be represented in terms of the 
orthonormal basis $\{\varphi_1,\varphi_2,\ldots\}$ as
\begin{equation}
\frac{\delta F([\theta])}{\delta \theta(x)}=
\sum_{k=1}^\infty 
\left(\frac{\delta F([\theta])}{\delta \theta(x)},
\varphi_k\right)_H \varphi_k(x).
\label{FDD}
\end{equation} 
A differentiation of \eqref{functionalFD1} with respect to $a_k$ 
yields\footnote{Equation \eqref{pj} can be equivalently written 
as
\begin{align}
\frac{\partial f}{\partial a_k} = F'([\theta])\varphi_k, \qquad k=1,2,\ldots.
\end{align}
}
\begin{align}
\frac{\partial f}{\partial a_k} = 
\left( \frac{ \delta F\left(\left[\theta \right]\right)}
{\delta \theta(x)},\varphi_k\right)_{H}, \qquad k=1,2,\ldots
\label{pj}
\end{align}
This means that the partial derivative of $f(a_1,a_2,\ldots)$ 
with respect to $a_k=(\theta,\varphi_k)_H$ is the 
projection of the first-order functional derivative of $F$ 
onto the basis element $\varphi_k$. By substituting 
\eqref{pj} into \eqref{FDD} we obtain 
\begin{align}
\frac{\delta F([\theta])}{\delta \theta(x)}=
\sum_{k=1}^\infty 
\frac{\partial f}{\partial a_k} \varphi_k(x). 
\label{FDD1}
\end{align}
This expression emphasizes that the first-order 
functional derivative \eqref{FDD1} is essentially 
a ``dot product'' between the (infinite-dimensional) 
gradient of $f$ and the (infinite-dimensional) vector 
of basis elements.  
Evaluating \eqref{FDD} on the finite-dimensional 
function space $D_m$ yields the cylindrical 
approximation
\begin{align}
\frac{\delta F([P_m \theta])}{\delta \theta(x)}=
\sum_{k=1}^m \frac{\partial f}{\partial a_k} \varphi_k(x) + 
\sum_{k=m+1}^\infty 
\left( \frac{ \delta F\left(\left[P_m\theta \right]\right)}
{\delta \theta(x)},\varphi_k\right)_{H}
\varphi_k(x).
\label{FDD2}
\end{align}
Here $f$ depends solely on the variables 
$(a_1,\ldots, a_m)$. If 
the functional derivative 
$\delta F([P_m \theta])/\delta \theta(x)$ 
is an element of $ D_m$ (as a function of $x$) then 
the second term at the right hand side of \eqref{FDD2} is 
clearly equal to zero.

\vs
\noindent 
{\em Example 1:} Let $F([\theta])$ be a continuously differentiable functional on a real separable Hilbert 
space $H$,  and let $P_m$  be the projection 
operator \eqref{proj_op}. Then from equations
\eqref{fff1} and \eqref{pj} it follows that 
\begin{equation}
F'([P_m\theta])P_m\theta = \sum_{k=1}^m a_k
\frac{\partial f}{\partial a_k},
\label{F'theta}
\end{equation}
where $f$ is the multivariate function 
defined in \eqref{functionalFD}.

\section{Convergence analysis of cylindrical approximations: continuous nonlinear functionals}
\label{sec:F}

In this section we perform a convergence analysis 
for nonlinear functional approximations of the form 
\eqref{functionalFD} in compact subsets of real 
separable Hilbert spaces. We begin by recalling 
an approximation result first obtained by 
Prenter in \cite{Prenter1970}.

\begin{lemma}
\label{UCONVHS}
{\bf (Uniform convergence of cylindrical functional approximations \cite[Lemma 5.3]{Prenter1970})} 
Let $H$ be a real separable Hilbert space,
$K$ a compact subset of $H$, and $P_m : H \to D_m$ the
projection operator \eqref{proj_op}. If $F$ is a continuous 
functional on $H$, then the sequence $F([P_m\theta])$
converges uniformly to $F([\theta])$ on $K$, i.e., 
for all $\epsilon>0$ there exists 
$m_\epsilon\in\mathbb{N}$ such that 
\begin{equation}
\left| F([\theta])-F([P_m\theta])\right| 
\leq \epsilon, \qquad
\end{equation}
for all $\forall m\geq m_\epsilon$ and for all 
$\theta\in K$
\end{lemma}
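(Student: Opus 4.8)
The statement to prove is that for a continuous functional $F$ on a real separable Hilbert space $H$ and a compact subset $K \subseteq H$, the cylindrical approximations $F([P_m\theta])$ converge uniformly to $F([\theta])$ on $K$. Let me think about the natural strategy.

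The key observation is that $F([P_m\theta])$ compares $F$ evaluated at $\theta$ versus at its projection $P_m\theta$. Uniform convergence means I need control that is independent of $\theta \in K$. The two ingredients I have available are: (1) $F$ is continuous on $K$, hence by the Heine-Cantor theorem it is uniformly continuous on the compact set $K$; and (2) the compactness of $K$ gives, via Theorem \ref{compactSets}, the equi-small tail condition, which forces $\|\theta - P_m\theta\|_H$ to be small uniformly over $\theta \in K$.

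My plan is as follows. First I would establish that $P_m\theta \to \theta$ uniformly on $K$, i.e., that $\sup_{\theta \in K}\|\theta - P_m\theta\|_H \to 0$ as $m \to \infty$. This is precisely where the equi-small tail condition enters: since $\theta - P_m\theta = \sum_{k=m+1}^\infty (\theta,\varphi_k)_H \varphi_k$, Parseval's identity gives $\|\theta - P_m\theta\|_H^2 = \sum_{k=m+1}^\infty |(\theta,\varphi_k)_H|^2$, and Theorem \ref{compactSets} guarantees that for every $\epsilon>0$ this tail is bounded by $\epsilon$ for all $\theta \in K$ once $m$ is large enough. I should also note that $P_m\theta$ need not lie in $K$, so I must enlarge the domain slightly: since $\|P_m\theta\|_H \le \|\theta\|_H$ and $K$ is bounded, both $\theta$ and $P_m\theta$ lie in a fixed closed ball $\overline{B}$ of $H$. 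The issue is that $F$ is only assumed continuous on $H$ (or on $K$), and $\overline{B}$ is not compact, so I cannot directly invoke uniform continuity on $\overline{B}$.

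To handle this, I would work on the compact set $\widehat{K} = \overline{\{P_m\theta : \theta \in K, m \in \mathbb{N}\} \cup K}$, and argue that it is itself compact so that $F$ is uniformly continuous there. Compactness of $\widehat{K}$ follows because the projections $P_m$ preserve the equi-small tail condition uniformly (the tail of $P_m\theta$ is bounded by the tail of $\theta$), so the whole family of projections still satisfies the hypotheses of Theorem \ref{compactSets}; the union with $K$ and the closure remain bounded, closed, and equi-small-tailed. Once uniform continuity of $F$ on $\widehat{K}$ is in hand, the conclusion is immediate: given $\epsilon>0$, choose $\delta>0$ from uniform continuity, then choose $m_\epsilon$ so that $\sup_{\theta\in K}\|\theta - P_m\theta\|_H \le \delta$ for all $m \ge m_\epsilon$, and since both $\theta$ and $P_m\theta$ lie in $\widehat{K}$ with distance at most $\delta$, we get $|F([\theta]) - F([P_m\theta])| \le \epsilon$ for all $\theta \in K$ and all $m \ge m_\epsilon$.

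The main obstacle is the non-compactness mismatch: uniform continuity requires a compact domain, but the projected points $P_m\theta$ may fall outside $K$. The crux of the argument is therefore verifying that the enlarged set $\widehat{K}$ containing all projections is still compact, which rests on checking that the projection operators do not destroy the equi-small tail condition. Everything else, the tail estimate and the final $\epsilon$-$\delta$ matching, is routine once this compactness is secured.
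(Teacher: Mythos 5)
Your proof is correct and is essentially the argument behind the result the paper cites from Prenter (Lemma 5.3 in \cite{Prenter1970}) without reproducing: the key ingredients are exactly the uniform approximation property $\sup_{\theta\in K}\|\theta-P_m\theta\|_H\to 0$ obtained from the equi-small tail condition of Theorem \ref{compactSets}, plus uniform continuity of $F$ on a compact set enlarged to contain all the projections $P_m\theta$. Your verification that the projections preserve the equi-small tail condition (so that $\widehat{K}$ is compact) correctly fills in the one nontrivial detail that the citation leaves implicit.
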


\noindent
The compactness hypothesis of the 
subset $K$ in  
Lemma \ref{UCONVHS} can be replaced 
by the weaker assumption that $K\subseteq H$ is bounded 
(e.g., a sphere),  and $F$ is uniformly continuous with 
respect to the so-called $S$-topology (see 
\cite{Bertuzzi} for details).

\subsection{Convergence rate}
Lemma \ref{UCONVHS} establishes uniform convergence 
of the functional $F([P_m\theta])$ to $F([\theta])$ on 
compact subsets of real separable Hilbert spaces. 
{We will now address how fast 
the approximation $F([P_m\theta])$ 
converges to $F([\theta])$. 
We will show that for continuously 
differentiable functionals (functionals 
with continuous Fr\'echet derivative) defined 
on compact, convex subset of real separable 
Hilbert spaces, $F([P_m \theta])$ converges 
to $F([\theta])$ at the same rate at which 
$P_m \theta$ converges to $\theta$ in $H$.}
This results follows from the well-known mean 
value theorem.  
\begin{theorem}
\label{MVT}
{\bf(Mean value theorem)} 
Let $F$ be a real-valued continuously differentiable 
functional on a compact convex subset $K$ of a real separable 
Hilbert space $H$. Then for all $\theta_1, \theta_2\in K$ 
the following estimate holds
\begin{equation}
\left| F([\theta_1])-F([\theta_2])\right|\leq
\sup_{\eta\in K}\left\| F'([\eta])\right\|
\left\|\theta_1-\theta_2\right\|_H,
\end{equation}
where $F'([\eta])$ denotes the first-order 
Fr\'echet derivative of $F$.

\end{theorem}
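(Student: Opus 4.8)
The plan is to prove the functional mean value theorem by reducing it to the classical one-dimensional mean value theorem along the line segment connecting $\theta_1$ and $\theta_2$. Since $K$ is convex, the segment $\theta(t) = \theta_2 + t(\theta_1 - \theta_2)$ for $t \in [0,1]$ lies entirely in $K$, so $F$ is defined and continuously differentiable at every point along it. First I would define the auxiliary real-valued function $g(t) = F([\theta(t)])$, which is a composition of the differentiable functional $F$ with the affine path $\theta(t)$.

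\noindent
The key computation is to show that $g$ is differentiable on $[0,1]$ with derivative $g'(t) = F'([\theta(t)])(\theta_1 - \theta_2)$. This follows from the chain rule: the G\^ateaux/Fr\'echet derivative of $F$ evaluated at $\theta(t)$, applied to the direction $\theta_1 - \theta_2 = \theta'(t)$, gives precisely the derivative of the composition. Concretely, one writes the difference quotient
\begin{equation}
\frac{g(t+\epsilon) - g(t)}{\epsilon} = \frac{F([\theta(t) + \epsilon(\theta_1-\theta_2)]) - F([\theta(t)])}{\epsilon},
\end{equation}
and passes to the limit $\epsilon \to 0$ using the definition \eqref{gateaux}-\eqref{derivativeD} of the G\^ateaux derivative, which coincides with the Fr\'echet derivative under our standing continuous-differentiability hypothesis. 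Continuity of $F'$ on the compact set $K$ ensures that $g'$ is continuous, so $g \in C^{(1)}([0,1])$.

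\noindent
Next I would apply the classical mean value theorem to $g$ on $[0,1]$: there exists $\xi \in (0,1)$ such that $g(1) - g(0) = g'(\xi)$. Translating back, this reads
\begin{equation}
F([\theta_1]) - F([\theta_2]) = F'([\theta(\xi)])(\theta_1 - \theta_2).
\end{equation}
Taking absolute values and bounding the right-hand side using the operator norm of the Fr\'echet derivative gives
\begin{equation}
\left| F([\theta_1]) - F([\theta_2]) \right| = \left| F'([\theta(\xi)])(\theta_1-\theta_2) \right| \leq \left\| F'([\theta(\xi)]) \right\| \left\| \theta_1 - \theta_2 \right\|_H.
\end{equation}
Since $\theta(\xi) \in K$, the norm $\|F'([\theta(\xi)])\|$ is dominated by $\sup_{\eta \in K} \|F'([\eta])\|$, yielding the claimed estimate.

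\noindent
The main technical point to be careful about is justifying that the supremum $\sup_{\eta \in K}\|F'([\eta])\|$ is finite and that the chain rule for $g'$ holds rigorously rather than merely formally. Finiteness is not an obstacle here: by Lemma \ref{COMPFRE} the Fr\'echet derivative is a compact (hence bounded) linear operator at each point, and continuity of $\eta \mapsto F'([\eta])$ together with compactness of $K$ guarantees the operator norm attains a finite maximum over $K$. The chain-rule step is the only place where one must invoke the equivalence of the Fr\'echet and G\^ateaux derivatives for continuously differentiable $F$ (cited earlier from \cite[p. 41]{Vainberg}); since the path $\theta(t)$ moves in the single fixed direction $\theta_1 - \theta_2$, even the weaker G\^ateaux differentiability suffices to identify $g'(t)$, so no genuine difficulty arises.
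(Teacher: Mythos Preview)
Your argument is correct and is exactly the standard reduction to the one-dimensional mean value theorem via the auxiliary function $g(t)=F([\theta_2+t(\theta_1-\theta_2)])$. The paper itself omits the proof entirely, noting only that the result is well known and that $\sup_{\eta\in K}\|F'([\eta])\|<\infty$ by compactness of $K$ and Lemma~\ref{COMPFRE}; your write-up supplies precisely the details the authors chose to skip, so there is no alternative approach to compare against.
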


\noindent
We omit the proof as this is a well-known result. 
We simply recall that since $F'([\theta])$ 
is the Fr\'echet derivative of a continuously 
differentiable functional on a compact metric 
space we have that $F'([\theta])$ is a compact 
linear operator (Theorem \ref{COMPFRE}), and 
therefore it is bounded on $K$. Hence, 
\begin{equation}
\sup_{\eta\in K}\left\| F'([\eta])\right\|=M<\infty.\nonumber
\end{equation}
We also emphasize that it is possible to relax the 
assumptions in Theorem \ref{MVT}. For instance, 
it is possible to drop the requirement that $F$ 
is continuously differentiable and  leverage the 
fact that for each $\epsilon >0$ and any pair of 
points $\theta_1,\theta_2\in K$ there 
exists a point $\theta^* \in K$ in which $F$ is 
Fr\'echet differentiable, and  
\begin{equation}
F([\theta_1])-F([\theta_2])<  
F'([\theta^*])(\theta_1-\theta_2)+\epsilon,
\end{equation}
provided the line $\theta_t= t\theta_1+(1-t)\theta_2$ 
is in $K$ for all $t\in [0,1]$. 
However, for the purpose of the present paper we 
shall simply restrict the class of nonlinear 
functionals we study to continuously differentiable 
nonlinear functionals. This allows us to 
obtain the following convergence rate 
result using the mean value Theorem \ref{MVT}. 
\begin{lemma}
\label{LMVT}
{\bf(Convergence rate of cylindrical functional approximations)} 
Let $F$ be a real-valued, continuously 
differentiable functional on a compact and
convex subset $K$ of a real separable Hilbert space $H$. 
Then for all $\theta\in K$ and for any 
finite-dimensional projection 
$P_m$ of the form \eqref{proj_op} 
we have
\begin{equation}
\left| F([\theta])-F([P_m\theta])\right|\leq 
\sup_{\eta\in K}\left\| F'([\eta])\right\|
\left\|\theta-P_m\theta\right\|_H.
\label{estimateMVT}
\end{equation}
In particular, $F([P_m\theta])$ converges 
to $F([\theta])$ for all $\theta\in K$ at the 
same rate as $P_m \theta$ converges to $\theta$ in $H$.
\end{lemma}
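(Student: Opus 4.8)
The plan is to obtain the estimate \eqref{estimateMVT} as an immediate consequence of the mean value Theorem \ref{MVT}, applied to the particular pair of points $\theta_1=\theta$ and $\theta_2=P_m\theta$. With this choice, Theorem \ref{MVT} yields directly
\[
\left|F([\theta])-F([P_m\theta])\right|\leq \sup_{\eta\in K}\left\|F'([\eta])\right\|\left\|\theta-P_m\theta\right\|_H,
\]
which is precisely \eqref{estimateMVT}. The entire content of the lemma therefore reduces to checking that the hypotheses of Theorem \ref{MVT} are met for this pair of points, and then interpreting the resulting bound.

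The step I expect to be the main obstacle is verifying that the pair $(\theta,P_m\theta)$ is admissible for Theorem \ref{MVT}. The theorem requires both endpoints to lie in $K$, and its proof evaluates $F'$ along the segment $\theta_t=t\theta+(1-t)P_m\theta$, $t\in[0,1]$; convexity of $K$ then guarantees that this segment stays in $K$, so that $F$ is differentiable along it and every derivative norm encountered is dominated by $\sup_{\eta\in K}\|F'([\eta])\|$. By hypothesis $\theta\in K$, but the truncated element $P_m\theta$ need not belong to $K$ in general, and indeed $F([P_m\theta])$ is only meaningful once $P_m\theta$ lies in the domain of $F$. I would resolve this either by restricting attention to sets $K$ that are invariant under the projection $P_m$ -- which covers the canonical examples such as the Sobolev spheres \eqref{Sobolevsphere0} and \eqref{Sobolevsphere}, where dropping Fourier modes does not increase the norm, so that $P_m\theta\in K$ automatically -- or, more generally, by assuming $F$ extends to a continuously differentiable functional on a convex set containing both $\theta$ and $P_m\theta$. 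Under either hypothesis the segment lies in the domain of differentiability, convexity applies, and Theorem \ref{MVT} can be invoked verbatim.

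It remains to record two routine facts. First, the constant $M=\sup_{\eta\in K}\|F'([\eta])\|$ is finite: by Lemma \ref{COMPFRE} the Fr\'echet derivative of a continuous functional on the compact set $K$ is a compact (hence bounded) linear operator at each point, and the remark following Theorem \ref{MVT} guarantees that its operator norm stays bounded on $K$. Second, the final assertion of the lemma is simply a reading of \eqref{estimateMVT}: since $M<\infty$ is independent of $m$, the functional error $|F([\theta])-F([P_m\theta])|$ is controlled by $M\|\theta-P_m\theta\|_H$, so it decays at exactly the rate at which the projection error $\|\theta-P_m\theta\|_H$ decays. Combined with the spectral tail estimates \eqref{rhsL2} and \eqref{34}, this transfers the convergence rate of $P_m\theta\to\theta$ -- algebraic in general, and faster for more regular data -- directly to the cylindrical approximation $F([P_m\theta])\to F([\theta])$.
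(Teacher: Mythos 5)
Your proof is exactly the paper's: the published argument consists of the single line ``the proof follows directly from the mean value Theorem \ref{MVT} by setting $\theta_1=\theta$ and $\theta_2=P_m\theta$,'' which is precisely your main step. The only difference is that you flag and patch the admissibility issue --- that $P_m\theta$ must lie in $K$ (or in a convex set where $F$ is continuously differentiable) for Theorem \ref{MVT} to apply --- a hypothesis the paper passes over silently, and which does hold in its examples since truncating a Fourier or polynomial expansion does not increase the Sobolev norm, so the spheres \eqref{Sobolevsphere0} and \eqref{Sobolevsphere} are $P_m$-invariant.
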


\noindent
The proof follows directly from the mean value Theorem  \ref{MVT} 
by setting $\theta_1=\theta$ and $\theta_2=P_m\theta$.

\vs
\noindent
{\em Example 1:} {Consider the compact 
subset  $K\subseteq L^2_w([-1,1])$ defined in equation \eqref{Sobolevsphere}. 
Then, for any continuously differentiable 
functional $F$ on $L^2_w([-1,1])$ we have
\begin{equation} 
\left| F([\theta])-F([P_m \theta])\right|\leq 
\frac{C}{m^{s}},\qquad \forall \theta\in K,
\end{equation}
where $P_m$ is a projection onto ultra-spherical 
polynomials in $[-1,1]$, and $C$ is the (finite) constant
\begin{equation}
C=C_2\rho^2\sup_{\eta\in K}\left\| F'([\eta])\right\|.
\end{equation}
Here, $C_2$ and $\rho^2$ are defined in \eqref{34}.  
If $\theta$ is infinitely 
differentiable, then $F([P_m \theta])$ converges 
to $F([\theta])$ exponentially fast in $m$.}

\section{Convergence analysis of cylindrical approximations: Fr\'echet and functional derivatives}
\label{sec:FD}
In this section we study convergence of cylindrical  
approximations of $F'([\theta])$ and 
$\delta F([\theta])/\delta \theta(x)$ in a compact 
subset $K$ of a separable real Hilbert space $H$. 
We begin with the following

\begin{theorem}
\label{thm:derivatives}
{\bf (Uniform approximation of first-order Fr\'echet  derivatives)}
Let $H$ be a real separable Hilbert space,
$K$ a compact subset of $H$, and $P_m:H \to D_m$ 
the projection operator \eqref{proj_op}. 
If $F$ is continuously differentiable on $K$ with Fr\'echet 
derivative $F'([\theta])$, then the sequence of operators 
$F'([P_m \theta])$ converges uniformly to 
$F'([\theta])$. In other words, for all $\epsilon>0$ 
there exists 
$m_\epsilon\in \mathbb{N}$ 
such that  
\begin{equation}
\left\|F'([\theta]) - F'([P_m\theta]) \right\|=\sup_{\substack{\eta\in H\\\eta\neq 0}} \frac{\left| 
F'([\theta])\eta -F'([P_m\theta])\eta \right|}{\left\|\eta \right\|_H}<\epsilon,
\end{equation}
for all $m\geq m_\epsilon$, and for all $\theta \in K$.
\end{theorem}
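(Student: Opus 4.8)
The plan is to reduce the operator-norm convergence of $F'([P_m\theta])$ to $F'([\theta])$ to two ingredients: the uniform continuity of the map $\theta \mapsto F'([\theta])$ on a suitable compact set, and the uniform convergence of $P_m\theta$ to $\theta$ over $\theta\in K$. The two are joined by a modulus-of-continuity estimate: for each $\theta\in K$ one bounds $\left\|F'([\theta])-F'([P_m\theta])\right\|$ by the modulus of continuity of $F'$ evaluated at the displacement $\left\|\theta-P_m\theta\right\|_H$. By Lemma \ref{COMPFRE} each $F'([\theta])$ is a compact (hence bounded) linear operator, so the operator norm above is finite and the statement is meaningful.

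The first and decisive step is to produce a compact set on which $F'$ is uniformly continuous and which simultaneously contains all the projected points $P_m\theta$. I would take the closure $\overline{\mathcal{K}}$ of
\[
\mathcal{K} = K \cup \left\{ P_m\theta \,:\, \theta\in K,\ m\in\mathbb{N}\right\},
\]
and show it is compact via the equi-small tail characterization of Theorem \ref{compactSets}. Given $\epsilon>0$, precompactness of $K$ furnishes an $M$ with $\sum_{k=M+1}^\infty |(\theta,\varphi_k)_H|^2 \le \epsilon$ for all $\theta\in K$; then for every $\theta\in K$ and every $m$ the corresponding tail of $P_m\theta$ equals $\sum_{k=M+1}^{m} |(\theta,\varphi_k)_H|^2 \le \epsilon$ when $m>M$ and vanishes when $m\le M$. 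Combined with the uniform bound $\|P_m\theta\|_H \le \|\theta\|_H$, this shows $\mathcal{K}$ is bounded and satisfies the equi-small tail condition, hence $\overline{\mathcal{K}}$ is compact.

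With $\overline{\mathcal{K}}$ in hand, the remaining steps are routine. Since $F$ is continuously differentiable, $\theta\mapsto F'([\theta])$ is continuous in the operator norm, and the Heine-Cantor theorem upgrades this to uniform continuity on the compact set $\overline{\mathcal{K}}$: for the given $\epsilon$ there is $\delta>0$ with $\left\|F'([\theta_1])-F'([\theta_2])\right\|<\epsilon$ whenever $\theta_1,\theta_2\in\overline{\mathcal{K}}$ satisfy $\left\|\theta_1-\theta_2\right\|_H\le\delta$. Separately, the equi-small tail property gives uniform convergence
\[
\sup_{\theta\in K}\left\|\theta-P_m\theta\right\|_H = \sup_{\theta\in K}\left(\sum_{k=m+1}^\infty |(\theta,\varphi_k)_H|^2\right)^{1/2} \longrightarrow 0,
\]
so there is $m_\epsilon$ with $\left\|\theta-P_m\theta\right\|_H\le\delta$ for all $m\ge m_\epsilon$ and all $\theta\in K$. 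As both $\theta$ and $P_m\theta$ lie in $\overline{\mathcal{K}}$, the modulus-of-continuity estimate yields $\left\|F'([\theta])-F'([P_m\theta])\right\|<\epsilon$ uniformly in $\theta\in K$, which is the claim.

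I expect the main obstacle to be precisely the first step. The hypothesis only asserts differentiability on $K$, yet the argument must evaluate $F'$ at the projections $P_m\theta$, which in general fall outside $K$; one therefore needs the projections, together with $K$, to form a precompact set so that uniform continuity of $F'$ is available exactly where it is used. This is where the equi-small tail structure of compact subsets of $H$ --- rather than mere boundedness and closedness --- is indispensable, and it implicitly requires $F'$ to remain continuous on the enlarged compact domain $\overline{\mathcal{K}}$.
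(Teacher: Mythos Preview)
Your proof is correct and takes a genuinely different route from the paper. The paper fixes a direction $\eta\in H$, defines the scalar functional $G_\eta([\theta])=F'([\theta])\eta$, applies Lemma~\ref{UCONVHS} to get $G_\eta([P_m\theta])\to G_\eta([\theta])$ uniformly in $\theta\in K$, and then argues that this pointwise-in-$\eta$ convergence can be upgraded to operator-norm convergence by invoking boundedness of the difference operator. Your argument bypasses this entirely: you work directly with the operator-valued map $\theta\mapsto F'([\theta])$, enlarge $K$ to a compact set $\overline{\mathcal{K}}$ that also contains every projection $P_m\theta$ (via the equi-small tail criterion of Theorem~\ref{compactSets}), and then apply Heine--Cantor to obtain uniform continuity of $F'$ in the operator norm on $\overline{\mathcal{K}}$. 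Combined with the uniform convergence $\sup_{\theta\in K}\|\theta-P_m\theta\|_H\to 0$, this yields the conclusion in one stroke. Your approach is more self-contained and makes transparent the step from pointwise to uniform convergence in $\eta$, which in the paper's argument is compressed into the phrase ``by combining \eqref{ineq} and \eqref{bd}''. You also explicitly surface the domain issue---that $F'$ must remain continuous on the enlarged set $\overline{\mathcal{K}}\supsetneq K$---which both arguments require but the paper leaves implicit.
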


\begin{proof}
Let us define the functional 
$G_\eta([\theta])=F'([\theta])\eta$. 
For each fixed $\eta\in H$ we have that $G_\eta([\theta])$ 
is nonlinear and continuous in $\theta$. Hence, we 
can apply Theorem \ref{UCONVHS} to conclude that 
$G_\eta([P_m \theta])$ converges uniformly to 
$G_\eta([\theta])$, i.e., that for each  
$\epsilon_\eta>0$ and there exists $m_\eta\in\mathbb{N}$ 
such that for all $m \geq m_\eta$
\begin{equation}
\label{ineq}
\left|G_\eta([\theta]) - G_\eta([P_m \theta])\right|<\epsilon_\eta,
\qquad \forall \theta\in K.
\end{equation}
Since $F$ is a continuously differentiable 
functional on a compact metric space, the 
Fr\'echet derivative $F'([\theta])$ is a 
compact linear operator (Theorem \ref{COMPFRE}) 
on $H$ for each $\theta\in K$. 
This means that for each fixed $\theta\in K$ 
the linear functional $G_\eta([\theta]) - G_\eta([P_m \theta])$
is bounded 
\begin{equation}
\left|G_\eta([\theta]) - 
G_\eta([P_m \theta])\right| 
\leq \gamma(m)\left\| \eta\right\|_H.
\label{bd}
\end{equation}
By combining \eqref{ineq} and \eqref{bd} we conclude 
that for each $\epsilon>0$ there exists $m_\epsilon\in 
\mathbb{N}$ such  
for 
\begin{equation}
\left\|F'([\theta]) - F'([P_m \theta]) \right\|=  
\sup_{\substack{{\eta\in H}\\\eta\neq 0}}
\frac{\left|\left( F'([\theta])-F'([P_m \theta])\right)\eta\right|}
{\left\|\eta\right\|_H}< 
\epsilon,
\end{equation}
for all $m\geq m_\epsilon$ and for all 
$\theta \in K$. This proves the theorem.

\end{proof}

\noindent
Next, we study convergence of the first-order functional 
derivative \eqref{first_order_FD}. This is relatively 
straightforward given the convergence result we just 
obtained in Theorem \ref{thm:derivatives}. In fact, the linear 
functional $F'([\theta])\eta$ is bounded for each $\theta$ in 
the compact set $K\subseteq H$ and therefore it 
admits the Riesz integral representation 
\begin{equation}
F'([\theta])\eta = \left(\frac{\delta F([\theta])}{\delta \theta(x)},\eta\right)_H \qquad \theta\in K, \qquad \eta \in H,
\label{Fprime}
\end{equation}
where $(\cdot,\cdot )_H$ is the inner product in $H$.
Uniform convergence of $F'([P_m\theta])$ to $F'([\theta])$ 
for all $\theta$ in the compact set $K\subseteq H$ implies 
that for every $\epsilon>0$ there exists 
$m_\epsilon\in\mathbb{N}$ such that   
\begin{equation}
\left|\left(\frac{\delta F([\theta])}{\delta \theta(x)}-
\frac{\delta F([P_m\theta])}{\delta \theta(x)},\eta\right)_H\right|
< \epsilon{\left\|\eta\right\|_H}, \qquad \forall \theta\in K,\qquad \forall\eta\in H\setminus\{0\},
\label{weak}
\end{equation}
and for all $m\geq m_\epsilon$. 

\begin{lemma}
\label{FunctionalDerivativeApprox}
{\bf (Uniform approximation of first-order functional derivatives)}
Let $H$ be a real separable Hilbert space,
$K$ a compact subset of $H$, $\theta\in K$ 
and $P_m: H \to D_m$ the projection \eqref{proj_op}. 
If $F$ is continuously differentiable on $K$ with Fr\'echet derivative 
$F'([\theta])$, then the sequence 
$\delta F([P_m\theta])/\delta \theta (x)$
converges uniformly to $\delta F([\theta])/\delta \theta (x)$. 
In other words, for all $\epsilon>0$ there exists 
$m_\epsilon \in \mathbb{N}$  
such that  for all $m\geq m_\epsilon$
\begin{equation}
\left\| \frac{\delta F([\theta])}{\delta \theta(x)}-
\frac{\delta F([P_m\theta])}{\delta \theta(x)}\right\|_H <\epsilon, \qquad \forall \theta\in K.
\label{67}
\end{equation}

\end{lemma}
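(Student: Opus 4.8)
The plan is to reduce the statement to the operator-norm convergence of the Fr\'echet derivatives already established in Theorem~\ref{thm:derivatives}, exploiting the fact that the Riesz correspondence is an isometry. First I would invoke Lemma~\ref{COMPFRE1} to identify, for each fixed $\theta\in K$, the functional derivative $\delta F([\theta])/\delta\theta(x)$ as the unique element of $H$ representing the bounded linear functional $\eta\mapsto F'([\theta])\eta$, and likewise $\delta F([P_m\theta])/\delta\theta(x)$ as the Riesz representative of $\eta\mapsto F'([P_m\theta])\eta$. Both functionals are bounded because the Fr\'echet derivative of a continuous functional on a compact subset of $H$ is a compact, hence bounded, linear operator by Lemma~\ref{COMPFRE}, so the hypotheses of the Riesz representation theorem are met throughout $K$.

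Second, I would use the linearity of the Riesz map: the difference $\delta F([\theta])/\delta\theta(x)-\delta F([P_m\theta])/\delta\theta(x)$ is exactly the representative of the functional $\eta\mapsto\left(F'([\theta])-F'([P_m\theta])\right)\eta$. Because the Riesz map is norm-preserving, the $H$-norm of this difference equals the operator norm of the difference of the two Fr\'echet derivatives,
\[
\left\|\frac{\delta F([\theta])}{\delta\theta(x)}-\frac{\delta F([P_m\theta])}{\delta\theta(x)}\right\|_H
=\sup_{\substack{\eta\in H\\ \eta\neq 0}}\frac{\left|\left(F'([\theta])-F'([P_m\theta])\right)\eta\right|}{\left\|\eta\right\|_H}
=\left\|F'([\theta])-F'([P_m\theta])\right\|.
\]
In effect, taking the supremum over $\eta\neq 0$ in the tested estimate \eqref{weak} upgrades it to the strong norm bound \eqref{67}; the displayed identity is what makes that passage rigorous rather than merely formal.

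Finally, I would apply Theorem~\ref{thm:derivatives} directly: given $\epsilon>0$ it furnishes an $m_\epsilon\in\mathbb{N}$, independent of $\theta$, with $\left\|F'([\theta])-F'([P_m\theta])\right\|<\epsilon$ for every $m\geq m_\epsilon$ and every $\theta\in K$. Substituting this into the displayed identity yields \eqref{67} uniformly on $K$. There is no genuine analytical obstacle in this argument; the only point demanding care is verifying that the Riesz isometry applies at each $\theta\in K$, which is precisely the boundedness of $F'([\theta])$ secured by the compactness result of Lemma~\ref{COMPFRE}. The lemma therefore emerges as a clean corollary of Theorem~\ref{thm:derivatives} through the norm-preserving property of the Riesz representation.
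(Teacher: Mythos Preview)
Your proposal is correct and follows essentially the same route as the paper's proof: both identify the $H$-norm of the difference of functional derivatives with the operator norm $\left\|F'([\theta])-F'([P_m\theta])\right\|$ via the Riesz isometry, and then invoke the uniform operator-norm convergence from Theorem~\ref{thm:derivatives} (the paper phrases this last step through the pre-derived inequality \eqref{weak}, but that is equivalent). Your write-up is arguably a bit more explicit in justifying why the Riesz correspondence applies at every $\theta\in K$, but the underlying argument is the same.
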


\begin{proof}
Consider the linear functional of $\eta\in H$
\begin{equation}
\left(F([\theta])-
F([P_m \theta])\right)\eta = \left(\frac{\delta F([\theta])}{\delta 
\theta(x)}-\frac{\delta F([P_m\theta])}
{\delta \theta(x)},\eta \right)_H.
\label{ffdelta}
\end{equation}
It is well known that the norm of \eqref{ffdelta} is 
\begin{equation}
M([\theta])=\left\| \frac{\delta F([\theta])}{\delta 
\theta(x)}-\frac{\delta F([P_m\theta])}
{\delta \theta(x)}\right\|_{H}.
\end{equation}
By definition, $M([\theta]$) is the smallest number such that 
\begin{equation}
\left|\left(F([\theta])-F([P_m \theta])\right)\eta\right|\leq 
M([\theta])\left\|\eta\right\|_{H}.
\end{equation}
This observation, together with \eqref{weak} allows us 
to conclude that $M([\theta])<\epsilon$ for all $\theta\in K$. 
This proves the theorem. 

\end{proof}

\subsection{Convergence rate}

Let us assume that $G_\eta([\theta])=F'([\theta])\eta$  
is continuously Fr\'echet differentiable with respect to $\theta$ 
in $H$. Denote by $G'_\eta([\theta])$ the first-order 
Fr\'echet derivative and let $K$ be a compact convex subset 
of $H$. By applying the mean value Theorem 
\ref{MVT} we obtain 
\begin{equation}
\left|G_\eta([\theta])-G_\eta([P_m\theta])\right|\leq 
{\sup_{\zeta\in K}}
\left\|G'_\eta([\zeta])\right\|\left\|\theta-P_m\theta\right\|_H,\qquad 
{\forall \eta\in H, \quad \forall \theta\in K}.
\label{sup}
\end{equation}
The Fr\'echet derivative of $G_\eta([\theta])$ 
can be written as\footnote{Note that $F''$ is a 
{compact symmetric bilinear form from 
$H\times H$} into $\mathbb{R}$ or $\mathbb{C}$.}
\begin{equation}
G'_\eta([\theta]) \xi =F''([\theta])\eta \xi. 
\end{equation}
{
If we divide \eqref{sup} by 
$\left\|\eta\right\|_H$ ($\eta\neq 0$) 
and take the supremum over $\eta\in H$ we obtain}
\begin{equation}
\left\| F'([\theta])-F'([P_m\theta])\right\| \leq 
{\sup_{\zeta\in K}} 
\left\| F'' ([\zeta])\right\|\left\|\theta-P_m\theta\right\|_H,
\qquad {\forall \theta\in K},
\label{FD2}
\end{equation}
{where
\begin{equation}
\left\| F'' ([\zeta])\right\|=\sup_{\substack{
{\eta,\xi\in H}\\\eta,\xi\neq 0}}
\frac{\left|F''([\zeta])\eta\xi\right|}
{\left\|\eta\right\|_H\left\|\xi\right\|_H}.
\end{equation}
The symmetric bilinear form $F'' ([\theta])$ 
is continuous on $H \times H$ and therefore it is 
bounded. Moreover, $\left\|F'' ([\theta])\right\|$ is 
continuous in $\theta$ and attains its minimum and 
maximum values in any compact set $K\subseteq H$. 
By equation \eqref{FD2} 
this implies that the Fr\'echet derivative $F'([P_m\theta])$ 
converges to $F'([\theta])$ in $K$ at the same rate 
as $P_m\theta$ converges to $\theta$ in $H$. We summarize 
these results in the following Lemma.
\begin{lemma}
\label{thm:higherderivatives}
{\bf (Convergence rate of first-order Fr\'echet derivatives)}
Let $H$ be a real separable Hilbert space, 
and let $F([\theta])$ be a nonlinear functional 
with continuous first- and second-order 
Fr\'echet derivatives. 
Then for all $\theta$ in a compact convex 
subset $K$ of $H$, and for any finite-dimensional 
projection $P_m$ of the form \eqref{proj_op} we have
\begin{equation}
\left\| F'([\theta])-F'([P_m\theta])\right\| \leq 
{\sup_{\zeta\in K}}\left\| F'' ([\zeta])\right\|\left\|\theta-P_m\theta\right\|_H.
\label{FfD2}
\end{equation}
In particular, $F'([P_m\theta])$ converges uniformly 
to $F'([\theta])$ in $K$ at the same rate as $P_m \theta$ 
converges to $\theta$ in $H$.
\end{lemma}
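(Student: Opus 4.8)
The plan is to reduce the operator-norm estimate to the scalar mean value theorem applied one direction at a time, mirroring how Lemma~\ref{LMVT} followed from Theorem~\ref{MVT}. First I would fix an arbitrary direction $\eta\in H$ and introduce the scalar functional $G_\eta([\theta])=F'([\theta])\eta$. Since $F$ is assumed to have a continuous second-order Fr\'echet derivative, $G_\eta$ is itself continuously Fr\'echet differentiable on $K$, with derivative $G'_\eta([\theta])\xi=F''([\theta])\eta\xi$. This places $G_\eta$ in exactly the setting required by the mean value Theorem~\ref{MVT}.

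Second, I would apply Theorem~\ref{MVT} to $G_\eta$ with $\theta_1=\theta$ and $\theta_2=P_m\theta$, obtaining
\begin{equation}
\left|G_\eta([\theta])-G_\eta([P_m\theta])\right|\leq \sup_{\zeta\in K}\left\|G'_\eta([\zeta])\right\|\left\|\theta-P_m\theta\right\|_H.
\end{equation}
Bounding $\|G'_\eta([\zeta])\|=\sup_{\xi\neq 0}|F''([\zeta])\eta\xi|/\|\xi\|_H\leq\|F''([\zeta])\|\,\|\eta\|_H$, dividing through by $\|\eta\|_H$, and then taking the supremum over $\eta\in H\setminus\{0\}$ produces precisely the bound \eqref{FfD2}, because the resulting left-hand supremum is by definition the operator norm $\|F'([\theta])-F'([P_m\theta])\|$. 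The ``same rate'' conclusion is then immediate from \eqref{FfD2}.

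Third, I would check the two facts that make the right-hand side finite and meaningful. Continuity of the symmetric bilinear form $F''$ guarantees it is bounded for each fixed $\zeta$, so $\|F''([\zeta])\|<\infty$ pointwise; continuity of the map $\zeta\mapsto\|F''([\zeta])\|$ together with compactness of $K$ then forces the supremum $\sup_{\zeta\in K}\|F''([\zeta])\|$ to be attained and finite, exactly as the norm of $F'$ was controlled via Theorem~\ref{COMPFRE} inside the proof of Theorem~\ref{MVT}.

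The main obstacle is a hypothesis that Theorem~\ref{MVT} quietly requires but the present statement does not spell out: applying the mean value theorem along the segment from $\theta$ to $P_m\theta$ needs that whole segment to lie in $K$, and in particular $P_m\theta\in K$. For the model compact sets of interest---the Sobolev spheres centered at the origin in Examples~1 and~2 of Section~\ref{sec:functionalonmetric}---orthogonal projection onto the first $m$ basis elements is norm-nonincreasing, so $\|P_m\theta\|\leq\|\theta\|$ keeps $P_m\theta$ inside $K$, and convexity of $K$ then places the entire segment in $K$. I would therefore either assume the projection invariance $P_mK\subseteq K$ explicitly or restrict attention to such centered, projection-invariant convex bodies; this is the single place where care beyond the routine chain of inequalities is genuinely needed.
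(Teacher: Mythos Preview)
Your proposal is correct and follows essentially the same route as the paper: define $G_\eta([\theta])=F'([\theta])\eta$, apply the mean value Theorem~\ref{MVT} to $G_\eta$ with $\theta_1=\theta$ and $\theta_2=P_m\theta$, bound $\|G'_\eta([\zeta])\|\le\|F''([\zeta])\|\,\|\eta\|_H$, then divide by $\|\eta\|_H$ and take the supremum over $\eta$. Your observation that the argument tacitly needs $P_m\theta\in K$ (so that the segment lies in $K$) is a genuine point the paper leaves implicit; for the projection-invariant convex sets used throughout (e.g., the Sobolev spheres), this is automatic.
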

Convergence rate results for higher-order Fr\'echet 
derivatives can be obtained 
in a similar way.
}

\section{Approximation of linear functional differential equations}
\label{sec:FDE}

Let $\mathcal{F}(H)$ denote a 
Banach space of nonlinear functionals 
from a real separable Hilbert space 
$H$ into $\mathbb{R}$ or $\mathbb{C}$.
In this section we develop necessary 
and sufficient conditions which guarantee 
that the solution to linear functional 
differential equations (FDEs) of the form 
\begin{equation}
\frac{\partial F([\theta],t)}{\partial t} = 
\mathcal{L}([\theta]) F([\theta],t), \qquad  
F([\theta],0)=F_0([\theta]),
\label{thefde}
\end{equation}
can be approximated  by the solution of 
suitable finite-dimensional linear partial differential 
equations. Equation  \eqref{thefde} is a linear 
abstract evolution equation (Cauchy problem) 
in the Banach space $\mathcal{F}(H)$ 
\cite{Guidetti2004}. The linear operator
$\mathcal{L}([\theta])$ is assumed to be 
in $\mathcal{C}(\mathcal{F})$, which 
is the set of closed, densely 
defined and continuous linear operators 
on $\mathcal{F}(H)$. Note that 
$\mathcal{L}([\theta])$ can be unbounded. 
To construct the approximation scheme for the 
FDE \eqref{thefde}, we consider the following 
cylindrical approximation of the solution functional $F$
\begin{equation}
F_m([\theta],t) = F([P_m \theta],t), \qquad \theta\in H,
\label{Fm}
\end{equation}
where $P_m$ is the projection operator \eqref{proj_op}.
We have seen in section \ref{sec:F} that 
$F_m([\theta],t)= f(a_1,\ldots,a_m,t)$ is a 
multivariate function in the $m$ variables 
$a_k=(\theta,\varphi_k)_H$ ($k=1,\ldots, m$)
which converges uniformly to $F([\theta],t)$ in 
every compact subset of $H$, {for 
any fixed time $t$}. 
From a functional analysis perspective, $F_m$ is 
an element of a Banach space of functionals on $H$, which 
we denote by $\mathcal{F}_m(H)$. 
With this notation, we see that the functional 
approximation \eqref{Fm} is essentially 
induced by the application of a 
continuous linear operator 
$B_m:\mathcal{F}(H)\rightarrow \mathcal{F}_m(H)$ 
defined as 
\begin{equation}
B_m F([\theta],t) = F([P_m\theta],t).
\label{Bm}
\end{equation}
Using the operator $B_m$, we 
perform the following decomposition 
of the right hand side of \eqref{thefde} 
\begin{align}
B_m \left(\mathcal{L}([\theta]) F([\theta],t)\right) 
= \mathcal{L}_m([\theta]) F_m([\theta],t) + 
R_m([\theta],t),
\label{decomposition}
\end{align}
where $\mathcal{L}_m([\theta])$ is a
linear operator acting on the $m$-dimensional 
function $F_m([\theta],t)=f(a_1,\ldots,a_m,t)$, 
and $R_m$ is a functional residual.  
The operator $\mathcal{L}_m([\theta])$ can be 
unbounded.
As an example, let $H=L_p^2([0,2\pi])$ (space of 
square integrable periodic functions 
in $[0,2\pi]$) and consider
\begin{equation}
\mathcal{L}([\theta])F([\theta],t) = \int_{0}^{2\pi}
\theta (x) 
\frac{\partial }{\partial x}
\frac{\delta F([\theta],t)}{\delta\theta(x)}
dx, \qquad \theta\in H.
\label{operator}
\end{equation}
A substitution of \eqref{functionalFD} 
and \eqref{FDD2} into \eqref{operator} 
yields
\begin{equation}
B_m \left(\mathcal{L}([\theta])F([\theta],t)\right) =
 \underbrace{\sum_{k,j=1}^m
a_j\frac{\partial f}{\partial a_k}
\int_{0}^{2\pi}
\frac{\partial \varphi_k}{\partial x}
\varphi_j dx}_{\mathcal{L}_m([\theta]) F_m([\theta],t)}+ 
\underbrace{\sum_{k=m+1}^\infty 
\left(\frac{\delta F([P_m\theta],t)}{\delta 
\theta(x)},\varphi_k\right)_{H}\int_{0}^{2\pi} 
\frac{\partial \varphi_k}
{\partial x} P_m\theta dx}_{R_m([\theta],t)},
\label{resDEF}
\end{equation}
where $a_j=(\theta,\varphi_j)_{H}$ ($j=1,\ldots,m$). 
Note that $\mathcal{L}_m([\theta])$ in \eqref{resDEF}
is a linear first-order partial differential operator 
with non-constant coefficients. 
\begin{definition}
\label{def:compatibility}
{\bf (Consistency)} A sequence of linear 
operators $\{\mathcal{L}_m\}\in \mathcal{C}(\mathcal{F}_m)$, 
is said to be consistent (or compatible) 
with a linear operator $\mathcal{L}\in
\mathcal{C}(\mathcal{F})$ if for 
every $F\in D(\mathcal{L})$\footnote{In 
Definition \ref{def:compatibility},
$D(\mathcal{L})$ denotes the domain of 
the operator $\mathcal{L}$.} there exists 
a sequence $F_m\in D(\mathcal{L}_m)$ 
such that 
\begin{equation} 
\left\|F-F_m\right\|\rightarrow 0\quad \text{and}
\quad \left\|\mathcal{L}
F-\mathcal{L}_m F_m
\right\| \rightarrow 0
\end{equation}
as $m\rightarrow \infty$. Moreover, if 
$\left\|\mathcal{L}_m F_m - \mathcal{L} F\right\|
=\mathcal{O}(m^{-p})$ then we say that the sequence 
$\{\mathcal{L}_m\}$ is consistent with $\mathcal{L}$ to order $p$.
\end{definition}
\begin{lemma}
\label{thm:consistency}
{\bf (Consistency of cylindrical approximations to FDEs)}
Let $H$ be a real separable Hilbert space. 
Consider a functional $F\in \mathcal{F}(H)$ 
and a densely defined  closed linear operator 
$\mathcal{L}\in \mathcal{C}(\mathcal{F})$. 
If $\mathcal{L}([\theta])F([\theta])$ is 
continuous in $\theta$ then the 
sequence of operators $\{\mathcal{L}_m\}$ 
defined in \eqref{decomposition} is 
consistent with $\mathcal{L}$ on every 
compact subset $K$ of $H$, provided $
\left\|R_m([\theta])\right\|\rightarrow 0$
as $m\rightarrow \infty$ for all $\theta\in K$.

\end{lemma}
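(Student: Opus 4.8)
The plan is to take the approximating sequence to be the cylindrical approximation itself, namely $F_m = B_m F = F([P_m\theta])$ from \eqref{Fm}, and to verify the two convergence requirements of Definition \ref{def:compatibility} by invoking the uniform convergence result of Lemma \ref{UCONVHS} twice. Throughout, I interpret the relevant norm as the supremum norm over the fixed compact set $K$, since that is the natural setting in which uniform convergence of cylindrical approximations is guaranteed.

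First I would establish $\|F-F_m\|\to 0$. Since $F\in\mathcal{F}(H)$ is a continuous nonlinear functional and $K$ is compact, Lemma \ref{UCONVHS} applies directly and gives uniform convergence of $F([P_m\theta])$ to $F([\theta])$ on $K$, which is precisely the first condition.

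The second requirement, $\|\mathcal{L}F-\mathcal{L}_m F_m\|\to 0$, is where the hypotheses enter. The key algebraic observation is that the decomposition \eqref{decomposition} rearranges to $\mathcal{L}_m F_m = B_m(\mathcal{L}F) - R_m$, so the triangle inequality gives
\begin{equation}
\left\|\mathcal{L}F - \mathcal{L}_m F_m\right\| \leq
\left\|\mathcal{L}F - B_m(\mathcal{L}F)\right\| + \left\|R_m\right\|.
\end{equation}
The second term tends to zero by the standing hypothesis on the residual. For the first term, note that $B_m(\mathcal{L}F)$ is exactly the cylindrical approximation of the functional $\theta\mapsto\mathcal{L}([\theta])F([\theta])$, which is continuous in $\theta$ by hypothesis; a second application of Lemma \ref{UCONVHS} to this functional therefore yields $\|\mathcal{L}F - B_m(\mathcal{L}F)\|\to 0$ on $K$. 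Combining the two estimates completes the argument.

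Rather than a genuine obstacle, the main point is recognizing that consistency collapses entirely to uniform convergence of cylindrical approximations applied to two different continuous functionals, $F$ and $\mathcal{L}F$. The one place requiring care is ensuring $F_m\in D(\mathcal{L}_m)$ so that $\mathcal{L}_m F_m$ is well defined; this holds automatically because $F_m=B_m F$ is the very cylindrical approximation entering \eqref{decomposition}. The continuity hypothesis on $\mathcal{L}([\theta])F([\theta])$ is exactly what licenses the second use of Lemma \ref{UCONVHS}, and without it the residual bound alone would not suffice.
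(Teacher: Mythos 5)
Your proposal is correct and follows essentially the same route as the paper: the identity $\mathcal{L}_m F_m = B_m(\mathcal{L}F) - R_m$ from \eqref{decomposition}, Lemma \ref{UCONVHS} applied to the continuous functional $\mathcal{L}([\theta])F([\theta])$ to control $\left\|\mathcal{L}F - B_m(\mathcal{L}F)\right\|$, the residual hypothesis for the remaining term, and Lemma \ref{UCONVHS} again for $\left\|F - F_m\right\|\rightarrow 0$. The only difference is organizational: you check the two conditions of Definition \ref{def:compatibility} in order and make the triangle inequality explicit, whereas the paper folds these steps together.
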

\begin{proof}
Equation \eqref{decomposition} implies 
that
\begin{align}
\left|\mathcal{L}([\theta])F([\theta]) - 
\mathcal{L}_m([\theta])F_m([\theta])\right| 
=& \left|\mathcal{L}([\theta])F([\theta]) - 
B_m\left( \mathcal{L}([\theta])F([\theta])\right)
+R_m([\theta]) \right|.
\label{87}
\end{align}
Since the functional 
$\mathcal{L}([\theta])F([\theta]) $ is  
continuous in $\theta$, we can now 
use the uniform approximation Theorem 
\ref{UCONVHS} and claim that for 
any $\epsilon>0$ there exists 
$m_\epsilon\in \mathbb{N}$ such 
that 
\begin{equation}
\left|\mathcal{L}([\theta])F([\theta]) - 
B_m\left( \mathcal{L}([\theta])F([\theta])\right)
\right|\leq \epsilon, \qquad \forall m\geq m_\epsilon,\quad \forall \theta \in K,
\label{88}
\end{equation}
where $K$ is a compact subset of $H$.
A substitution of \eqref{88} into \eqref{87} yields,
\begin{align}
\left|\mathcal{L}([\theta])F([\theta]) - 
\mathcal{L}_m([\theta])F_m([\theta])\right|  
\leq &\epsilon
+\left|R_m([\theta]) \right|, \qquad \theta \in K.
\end{align}
Hence if $\left|R_m([\theta]) \right|\rightarrow 0$ 
for all $\theta\in K$ as $m\rightarrow\infty$ then 
$\mathcal{L}([\theta])F([\theta]) \rightarrow \mathcal{L}_m([\theta])F_m([\theta])$ for all $\theta$ in $K$. 
By Theorem \ref{UCONVHS} we also have that 
$F_m\rightarrow F$ on $K$. Hence the sequence 
$\{\mathcal{L}_m\}$ is a consistent approximation 
of $\mathcal{L}$.
 
\end{proof}

\begin{corollary}
\label{thm:consistencyORDER}
Under the same assumptions of Lemma
\ref{thm:consistency} if, in addition, 
$K$ is convex, $\mathcal{L}([\theta])F([\theta])$ is 
continuously Fr\'echet differentiable in 
$K$, and $\left\| R_m([\theta])\right\|=\mathcal{O}(
\left\|\theta-P_m\theta\right\|_H)$ 
then $\{\mathcal{L}_m\}$ is consistent 
with $\mathcal{L}$ to the same order 
as $P_m\theta$ converges to $\theta$ 
in $H$. 
\end{corollary}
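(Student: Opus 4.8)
The plan is to upgrade the qualitative convergence argument in the proof of Lemma~\ref{thm:consistency} to a quantitative rate estimate by invoking the mean value Theorem~\ref{MVT} through Lemma~\ref{LMVT}. The starting point is the exact identity furnished by the decomposition \eqref{decomposition}, namely
\[
\mathcal{L}([\theta])F([\theta]) - \mathcal{L}_m([\theta])F_m([\theta]) = \Big(\mathcal{L}([\theta])F([\theta]) - B_m\big(\mathcal{L}([\theta])F([\theta])\big)\Big) + R_m([\theta]).
\]
The first observation I would make is that the parenthesized term is precisely the cylindrical approximation error of the single functional $G([\theta]) := \mathcal{L}([\theta])F([\theta])$. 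Indeed, by the definition \eqref{Bm} of $B_m$ we have $B_m G([\theta]) = G([P_m\theta])$, so this term equals $G([\theta]) - G([P_m\theta])$. The problem thus reduces to estimating the convergence rate of a cylindrical functional approximation for $G$, which is exactly the situation covered by Lemma~\ref{LMVT}.

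The core of the argument is then to apply Lemma~\ref{LMVT} to $G$. By hypothesis $G([\theta]) = \mathcal{L}([\theta])F([\theta])$ is continuously Fr\'echet differentiable on the compact convex set $K$, so the mean value estimate gives
\[
\big|G([\theta]) - G([P_m\theta])\big| \leq \sup_{\eta\in K}\big\|G'([\eta])\big\|\,\big\|\theta - P_m\theta\big\|_H, \qquad \forall\,\theta\in K.
\]
Since $G'$ is the Fr\'echet derivative of a continuously differentiable functional on a compact set, it is a compact---hence bounded---linear operator (Lemma~\ref{COMPFRE}), and $\eta\mapsto\|G'([\eta])\|$ is continuous, so the supremum over the compact $K$ is finite. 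This term is therefore $\mathcal{O}(\|\theta - P_m\theta\|_H)$, uniformly in $\theta\in K$. Combining this with the additional hypothesis $\|R_m([\theta])\| = \mathcal{O}(\|\theta - P_m\theta\|_H)$ via the triangle inequality yields
\[
\big\|\mathcal{L}([\theta])F([\theta]) - \mathcal{L}_m([\theta])F_m([\theta])\big\| = \mathcal{O}\big(\|\theta - P_m\theta\|_H\big), \qquad \forall\,\theta\in K,
\]
which, by Definition~\ref{def:compatibility}, is exactly the assertion that $\{\mathcal{L}_m\}$ is consistent with $\mathcal{L}$ to the same order as $P_m\theta$ converges to $\theta$ in $H$.

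The argument is largely mechanical given the earlier results, so I do not expect a serious obstacle; the only points requiring care are bookkeeping ones. First, when $\mathbb{F}=\mathbb{C}$ the functional $G$ is complex-valued, whereas Lemma~\ref{LMVT} is stated for real-valued functionals, so I would apply the mean value estimate separately to the real and imaginary parts of $G$ (each continuously differentiable on $K$) and recombine, at worst doubling the constant. Second, I would confirm that the finiteness of $\sup_{\eta\in K}\|G'([\eta])\|$ genuinely follows from compactness of $G'$ together with continuity of $\eta\mapsto\|G'([\eta])\|$ on the compact $K$, exactly as already invoked in the remark following Theorem~\ref{MVT}.
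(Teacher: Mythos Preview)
Your proposal is correct and follows essentially the same approach as the paper's own proof: both start from the identity \eqref{87}, apply the mean value Theorem~\ref{MVT} (you via its corollary Lemma~\ref{LMVT}) to the functional $G([\theta])=\mathcal{L}([\theta])F([\theta])$, and combine with the assumed rate on $R_m$. Your write-up is more detailed than the paper's one-line justification, and your remarks on the complex-valued case and the finiteness of $\sup_{\eta\in K}\|G'([\eta])\|$ are appropriate bookkeeping.
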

\begin{proof}
By using the mean value Theorem \ref{MVT}
and equation \eqref{87} we immediately conclude that 
\begin{equation}
\left\|\mathcal{L}([\theta])F([\theta]) - 
\mathcal{L}_m([\theta])F_m([\theta])\right\| 
= \mathcal{O}(\left\|\theta-P_m\theta\right\|_H).
\end{equation}
Hence, $\mathcal{L}_m$ is consistent 
with $\mathcal{L}$ to the same order as 
$P_m\theta$ converges to $\theta$ in $K$.

\end{proof}

\vs
\noindent
{\em Example 1:} 
Let $H=L_p^2([0,2\pi])$ be the space of 
square integrable periodic functions in $[0,2\pi]$, 
$\{\varphi_k\}$ an orthonormal Fourier 
basis in $H$, and $K\subseteq H$ 
the Sobolev sphere \eqref{Sobolevsphere0} (together 
with its closure in $H$). 
We have seen in section \ref{sec:functionalonmetric} 
that $K$ is a compact subset of $H$. 
We now show that the operator $\mathcal{L}_m([\theta])$ 
defined in \eqref{resDEF} is a consistent approximation 
of the operator \eqref{operator}, in the compact set $K$. 
For all $m$ larger than some fixed $m_0\in \mathbb{N}$ 
we have
\begin{align}
\left|R_m([\theta],t)\right|=&\left|\sum_{k=m+1}^\infty \left(\frac{\delta F([P_m\theta],t)}{\delta 
\theta(x)},\varphi_k\right)_{H}\int_{0}^{2\pi} \frac{\partial \varphi_k}
{\partial x} P_m\theta dx \right|\nonumber\\
\leq& 
\sum_{k=m+1}^\infty\left|\left(\frac{\delta F([P_m\theta],t)}{\delta 
\theta(x)},\varphi_k\right)_{H}\right|\left\| \varphi_k
\right\|_{H}
\left\|\frac{\partial  (P_m \theta)}{\partial x}\right\|_{H}\nonumber\\
\leq& \gamma
\sum_{k=m+1}^\infty\left|\left(\frac{\delta F([P_m\theta],t)}{\delta 
\theta(x)},\varphi_k\right)_{H}\right|,
\label{res1}
\end{align}
{where $\gamma$ is a constant} 
independent of $m$. 
To obtain the last inequality, we 
used the fact that $\varphi_k$ is orthonormal 
in $H$ ($\left\| \varphi_k\right\|_{H}=1$), and 
that $\left\|\partial(P_m \theta)/\partial x\right\|$ 
converges to $\left\|\partial \theta/\partial x\right\|$ 
in $H$ (uniformly in $\theta\in K$). The proof of this 
statement is based on the following inequalities 
\cite[p. 38]{spectral} 
\begin{equation}
\left|\left\|\frac{\partial  \theta}{\partial x}\right\|_H - 
\left\|\frac{\partial  (P_m \theta)}{\partial x}\right\|_H\right|
\leq\left\|\frac{\partial  \theta}{\partial x}-\frac{\partial  (P_m \theta)}{\partial x}\right\|_H \leq \frac{C}{m^{s-1}}
\left\|\frac{d^s\theta}{dx^s}\right\|_H\leq \frac{\rho C}{m^{s-1}}.
\label{spectralDer}
\end{equation}
In the last inequality, we used the fact 
that $\theta$ is in the Sobolev sphere 
\eqref{Sobolevsphere0}. From \eqref{spectralDer} it 
follows that 
\begin{equation}
\left\|\frac{\partial  (P_m \theta)}{\partial x}\right\|_H\leq \left\|\frac{\partial   \theta}{\partial x}\right\|_H+\frac{\rho C}{m^{s-1}}\leq \kappa \rho + \frac{\rho C}{m^{s-1}}
\leq \underbrace{\kappa \rho + \frac{\rho C}{m_0^{s-1}}}_{\gamma}, \qquad \forall m\geq m_0,
\label{84}
\end{equation} 
where we repeatedly applied the Poincar\'e inequality 
$\left\|f\right\|_{H}\leq g \left\|\partial f/\partial x\right\|_{H}$ to obtain the constant $\kappa$.
Equation \eqref{84} defines the constant 
$\gamma$ appearing in \eqref{res1}.
At this point we recall that the functional 
derivative $\delta F([P_m\theta],t)/\delta \theta(x)$ 
converges strongly in $H$ to 
$\delta F([\theta],t)/\delta \theta(x)$
as $m$ goes to infinity for all $\theta\in K$
(Theorem \ref{FunctionalDerivativeApprox}).
This implies that \eqref{res1} goes to zero 
for all $\theta\in K$ as $m$ goes 
to infinity\footnote{Recall that if 
$f_n\rightarrow f$ is a strongly convergent 
sequence in a Hilbert space $H$ then, for any given 
$\epsilon > 0$ and any orthonormal basis of 
$H$ there exists $m_\epsilon\in \mathbb{N}$ 
such that
\begin{equation}
\sum_{k=m_\epsilon+1}^\infty 
\left|(f_n,\varphi_k)_H\right|< \epsilon.
\end{equation}}, i.e., 
\begin{equation}
\max_{\theta\in K}\left| R_m([\theta],t)\right|\rightarrow 0,
\qquad \forall t\in [0,T].
\label{consistency}
\end{equation}
The rate of convergence depends 
on the regularity of the first-order 
functional derivative 
$\delta F([\theta],t)/\delta \theta(x)$ 
as a function of $x$. In particular, if 
$\delta F([\theta],t)/\delta \theta(x)$ is
infinitely differentiable in $x$, 
then  \eqref{consistency} 
goes to zero exponentially fast 
with $m$ \cite[p. 36]{spectral}.

\subsection{Cylindrical approximations to FDEs: stability and convergence}
Let us now consider the $m$-dimensional linear PDE
\begin{equation}
\frac{\partial F_m([\theta],t)}{\partial t}= \mathcal{L}_m([\theta]) F_m([\theta],t),  \qquad
F_m([\theta],0)= B_m F_0([\theta]),
\label{thepde}
\end{equation}
where $B_m$ and $F_0$ are defined in \eqref{Bm}
and \eqref{thefde}, respectively. If the conditions 
of Lemma \ref{thm:consistency} are satisfied 
then we  say that the PDE \eqref{thepde} is a 
consistent approximation of the FDE \eqref{thefde}.
Moreover if $\{\mathcal{L}_m\}$ in \eqref{thepde} 
is consistent with $\mathcal{L}$ to order $p$ then we 
say that the PDE \eqref{thepde} is consistent with 
the FDE \eqref{thefde} with order $p$.

A fundamental question at this point is 
whether the solution of \eqref{thepde} 
converges to the solution of the FDE \eqref{thefde} as 
we send $m$ to infinity. 
The Trotter-Kato approximation theorem for
abstract evolution equations in Banach spaces
\cite[p. 209]{engel1999one} states that this 
is indeed the case, provided the initial 
value problem \eqref{thepde} is 
``stable'' in the following sense. 
\begin{definition}
\label{def:stability}
{\bf (Stability)} 
Suppose that the linear operator 
$\mathcal{L}_m$ in \eqref{thepde} 
generates a strongly continuous semigroup 
$e^{t \mathcal{L}_m}$. We say that 
the FDE approximation \eqref{thepde} is 
stable if there are two constants $M$ and 
$\omega$ independent of $m$ such that  
$\left\| e^{t \mathcal{L}_m}\right\|\leq Me^{\omega t}$.
\end{definition} 
We now have all elements to state 
a version of the Trotter-Kato  
theorem \cite[p. 8]{Guidetti2004} 
that holds for cylindrical approximations 
of functional differential equations. 

\begin{theorem}
\label{thm:FDEconvergence}
{\bf (Convergence of cylindrical 
approximations to FDEs)}
Suppose that the initial value problem \eqref{thefde} is 
well-posed in the time interval $[0,T]$ ($T$ finite), 
and that $\mathcal{L}([\theta])\in 
\mathcal{C}(\mathcal{F})$ generates a 
strongly continuous semigroup in $[0,T]$.
Then the FDE approximation \eqref{thepde} is 
stable and consistent in a compact subset $K$ 
of a real separable Hilbert space $H$ 
if and only if it is convergent, i.e.,
\begin{equation}
\max_{t\in [0,T]}\max_{\theta\in K}
\left| F_m([\theta],t) - F([\theta],t) \right|
\rightarrow 0 
\label{convergence}
\end{equation}
as $m\rightarrow \infty$, provided $F_m([\theta],0)\rightarrow F_0([\theta])$. 
\end{theorem}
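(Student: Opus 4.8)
The plan is to recognize the statement as the Lax-type equivalence that underlies the Trotter-Kato approximation theorem \cite[p. 209]{engel1999one}, \cite[p. 8]{Guidetti2004}, and to check that the objects constructed in this section fit its hypotheses. First I would cast everything in the abstract discrete-approximation framework: take $X=\mathcal{F}(H)$ as the ambient Banach space, $X_m=\mathcal{F}_m(H)$ as the approximating spaces, and the operators $B_m:\mathcal{F}(H)\to\mathcal{F}_m(H)$ of \eqref{Bm} as the connecting maps. The semigroup generated by $\mathcal{L}([\theta])$ is $T(t)=e^{t\mathcal{L}}$, whose existence on $[0,T]$ is guaranteed by the well-posedness and generation hypotheses, and the approximating semigroups are $T_m(t)=e^{t\mathcal{L}_m}$ generated by the operators $\mathcal{L}_m([\theta])$ of \eqref{decomposition}. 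In this language $F([\theta],t)$ is the value at $\theta$ of $T(t)F_0$ and $F_m([\theta],t)$ is the value of $T_m(t)B_mF_0$.

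Next I would verify the three structural hypotheses. First, the connecting maps $B_m$ are uniformly bounded (indeed $\|B_m\|\le 1$, since $P_m$ is an orthogonal projection) and satisfy the approximation identity $\|B_mF-F\|\to 0$ as $m\to\infty$ for every $F\in\mathcal{F}(H)$; this is precisely the uniform convergence on compact subsets established in Lemma \ref{UCONVHS}, once the norm on $\mathcal{F}(H)$ is read as the supremum over $\theta$ in the compact set $K$ and over $t\in[0,T]$. Second, stability in the sense of Definition \ref{def:stability} supplies the uniform bound $\|e^{t\mathcal{L}_m}\|\le Me^{\omega t}$ with $M,\omega$ independent of $m$. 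Third, consistency in the sense of Definition \ref{def:compatibility}, verified for the cylindrical operators in Lemma \ref{thm:consistency}, gives $\|\mathcal{L}_mF_m-\mathcal{L}F\|\to 0$ whenever $\|F_m-F\|\to 0$.

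With these hypotheses I would close the equivalence in both directions. For the direction (stability and consistency) $\Rightarrow$ convergence: under the uniform stability bound, the operator consistency can be upgraded to strong resolvent convergence $R(\lambda,\mathcal{L}_m)B_mF\to B_mR(\lambda,\mathcal{L})F$ for some $\lambda$ with $\mathrm{Re}\,\lambda>\omega$, using that $D(\mathcal{L})$ is a core for $\mathcal{L}$; the Trotter-Kato theorem then yields $\|e^{t\mathcal{L}_m}B_mF_0-B_me^{t\mathcal{L}}F_0\|\to 0$ uniformly for $t\in[0,T]$, and combining this with the approximation identity $\|B_mF(t)-F(t)\|\to 0$ via the triangle inequality produces \eqref{convergence}. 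For the converse, convergence together with the fact that the limit $T(t)$ is a strongly continuous semigroup forces, by the uniform boundedness principle, a uniform bound on $\{e^{t\mathcal{L}_m}\}$ (stability), while consistency is recovered by differentiating the convergent semigroups at $t=0$ on the common core.

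I expect the main obstacle to be the upgrade step, namely translating the consistency notion of Definition \ref{def:compatibility} --- framed as convergence of $\mathcal{L}_mF_m$ along sequences $F_m\to F$ --- into the strong resolvent convergence that the abstract Trotter-Kato theorem actually requires, while the operators $\mathcal{L}_m$ and $\mathcal{L}$ are unbounded and live on distinct spaces $\mathcal{F}_m(H)\subset\mathcal{F}(H)$. This bridge relies essentially on the stability bound to control the resolvents uniformly and on exhibiting a common core on which the operator convergence holds; verifying that the dense domain $D(\mathcal{L})$ supplies such a core, and that the residual $R_m$ of \eqref{decomposition} does not obstruct the resolvent estimate, is where the real work lies. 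A secondary subtlety is keeping the restriction to the compact convex set $K$ consistent throughout, so that all norms genuinely reduce to the $\max_{\theta\in K}$, $\max_{t\in[0,T]}$ quantities appearing in \eqref{convergence}.
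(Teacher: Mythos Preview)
Your proposal is correct and follows exactly the paper's approach: the paper does not give an independent proof of this theorem but simply states ``The proof of this theorem can be found in \cite[p.~210]{engel1999one}'', i.e., it defers entirely to the Trotter--Kato approximation theorem, which is precisely what you have unpacked. Your sketch of how the objects $B_m$, $\mathcal{L}_m$, and the stability/consistency hypotheses feed into the abstract Trotter--Kato framework is in fact more detailed than anything the paper itself provides.
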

The proof of this theorem can be found 
in \cite[p. 210]{engel1999one}. In summary, to 
prove that a cylindrical approximations 
to FDEs is convergent we can proceed as follows:

\begin{enumerate}
\item [a)]  Construct the multivariate PDE 
\eqref{thepde} and show that such PDE
is a consistent approximation to the FDE 
\eqref{thefde} (Lemma \ref{thm:consistency}) 

\item [b)] Study stability of \eqref{thepde}.  
This is a PDE-specific result stating 
that it is possible to control some norm of 
the solution of \eqref{thepde} by a constant 
multiple of a suitable norm of the initial condition, 
and all the norms involved (including the constant)
do not depend on $m$. The simplest 
stability results arise from energy inequalities, 
e.g., for PDEs with continuous and coercive linear 
operators $\mathcal{L}_m$.

\item [c)] Apply Theorem \ref{thm:FDEconvergence} 
to claim that if a) and b) are satisfied, then 
the solution of the PDE \eqref{thepde} 
converges uniformly to the solution 
of the FDE \eqref{thefde} as the 
number of independent variables $m$ goes to 
infinity.
\end{enumerate}

\vs
\noindent 
{\em Example 2:} Consider the 
initial value problem
\begin{equation}
\frac{\partial F([\theta],t)}{\partial t}=
\int_{0}^{2\pi}
\theta (x) 
\frac{\partial }{\partial x}
\frac{\delta F([\theta])}{\delta\theta(x)}dx,
\qquad F([\theta],0)=F_0([\theta]).
\label{example2}
\end{equation}
{The FDE \eqref{example2} 
is the Hopf equation corresponding to the 
linear PDE  
\begin{equation}
\frac{\partial u}{\partial t}=
\frac{\partial u}{\partial x}, \qquad
u(x,0)=u_0(x),
\label{PDElin}
\end{equation}
where $u_0(x)$ is random and 
periodic in $[0,2\pi]$. 
To show this, let 
\begin{equation}
F([\theta],t)= \mathbb{E}\left\{\exp
\left(i\int_{0}^{2\pi}u(x,t)\theta(x)dx\right)\right\}
\label{HPDE}
\end{equation}
be the Hopf functional associated 
with the solution to \eqref{PDElin}.
The expectation operator $\mathbb{E}\left\{\cdot\right\}$ 
in \eqref{HPDE} is an integral over the probability 
measure of $u_0(x)$. Differentiation of \eqref{HPDE}
with respect to time yields
\begin{align}
\frac{\partial F([\theta],t)}{\partial t}=&
i\mathbb{E}\left\{\exp\left(i\int_{0}^{2\pi} u(x,t)\theta(x)dx\right)
i\int_{0}^{2\pi} \frac{\partial u(x,t)}{\partial t}\theta(x)dx\right\}\nonumber\\
 =&
i\mathbb{E}\left\{\exp\left(i\int_{0}^{2\pi} u(x,t)\theta(x)dx\right)
i\int_{0}^{2\pi} \frac{\partial u(x,t)}{\partial x}\theta(x)dx\right\}
\nonumber\\
 =&
\int_{0}^{2\pi} \frac{\partial }{\partial x}
\left(\frac{\delta F([\theta],t)}{\delta \theta(x)}\right)
\theta(x)dx.
\end{align}}

\noindent
We assume that $\theta$ is in the compact 
set $K\subseteq L^2_p([0,2\pi])$ defined 
in \eqref{Sobolevsphere0}. This is  
domain in which we solve the FDE \eqref{example2}. 
Let $\{\varphi_1,\varphi_2,\ldots\}$ 
be an orthonormal basis of $L^2_p([0,2\pi])$. 
By equations \eqref{resDEF} and \eqref{consistency}  
we have that the $m$-dimensional PDE
\begin{equation}
\frac{\partial f}{\partial t}=\sum_{k,j=1}^m
\frac{\partial f}{\partial a_k}\int_{0}^{2\pi}
\frac{\partial \varphi_k}{\partial x}\varphi_jdx, 
\qquad f(a_1,\ldots,a_m,0) = F_0([P_m \theta]), 
\label{example2C}
\end{equation}
where $a_k=(\theta,\varphi_k)_{L^2_p([0,2\pi])}$, is 
a consistent cylindrical approximation to 
the FDE \eqref{example2}.
Next, we show that such approximation is stable 
in the sense of Definition \ref{def:stability}.
By using the method of characteristics \cite{Rhee}
it is straightforward to to show that the solution of  \eqref{example2C} can be bounded as
\begin{equation}
|f(a_1,\ldots, a_m,t)| \leq  
\left\|f_0\right\|_{L^\infty(\mathbb{R}^m)},
\label{stability}
\end{equation}
where $f_0 = f(a_1,\ldots, a_m,0)$.
Hence, if the $L^\infty$ norm of $f_0$ is bounded by 
a constant $\kappa$ that is independent of $m$, 
then \eqref{example2C} is stable in the 
$L^\infty(\mathbb{R}^m)$ norm\footnote{An 
example of a cylindrical functional that is bounded 
in the $L^{\infty}(\mathbb{R}^m)$ norm is 
\begin{equation}
f(a_1,\ldots,a_m,0)=\int_0^{2\pi} \sin(x)\sin
\left(\sum_{k=1}^m a_k \varphi_k(x)\right)dx,\qquad a_k=(\theta,\varphi_k)_{L^2_p([0,2\pi])}.
\end{equation}
In section \ref{sec:convFDE} we show that 
the solution of \eqref{example2C} corresponding 
to such initial condition converges uniformly in $K$ 
and exponentially fast in $m$ to the 
solution of \eqref{example2}.}. 
Such strong bound implies that the 
solution \eqref{example2C} is also 
bounded in the $L^2_\mu$ norm, where $\mu$ is 
the measure defined in \eqref{functionalmeasure}. In fact,
we have 
\begin{equation}
\left\|f\right\|_{L^2_\mu}\leq \left\|f_0\right\|_{L^\infty}\leq \kappa,\qquad 
\forall m\in\mathbb{N}.
\label{stability1}
\end{equation}
Note that this also implies that the functional 
integral defined in \eqref{fInt}-\eqref{fdint} 
converges, as it is bounded by the same constant 
$\kappa$ independently of $m$.
By using Theorem \ref{thm:FDEconvergence} 
we conclude that the solution of 
the PDE \eqref{example2C} converges uniformly 
to the solution of the FDE \eqref{example2} in $K$, 
as the number of independent variables $m$
goes to infinity.

\vs\noindent
We now have the main tools to study 
convergence of cylindrical approximations to FDEs. 
The main result is Theorem \ref{thm:FDEconvergence} 
which is based on the Trotter-Kato approximation 
theorem for abstract evolution equations in Banach spaces
\cite{Trotter1958,Kato1959}. The theorem states that 
stable consistent approximations of FDEs are convergent, 
but it does not provide an estimate on the rate of 
convergence of the approximation, i.e., 
how fast $F_m$ converges to $F$. Estimates 
of such rate of convergence are available in rather 
general cases (e.g., \cite{Campiti2008}), 
but a thorough analysis for cylindrical 
approximations of FDEs is lacking.
Nevertheless, in section 
\ref{sec:FDEconvergenceNumerics} we will 
show that the convergence rate of the 
cylindrical approximation to a prototype 
FDE can be exponential.

{
\section{Approximation of nonlinear functionals and 
FDEs in real Banach spaces admitting a basis}
\label{sec:Banach}
In this section we outline the extension 
of the functional approximation theory 
we developed in real separable Hilbert spaces to 
nonlinear functionals and FDEs defined on
compact subsets of real Banach spaces 
admitting a basis\footnote{{A 
real Banach space with a basis $\{\varphi_k\}$ 
is necessarily separable since the set 
of all finite linear combinations 
$\sum_{k}a_k\varphi_k$ forms 
a countable dense subset of $X$ \cite{Morrison2001}. 
The longstanding question of whether every 
separable Banach space possesses a basis was 
answered by Per Enflo \cite{Enflo} in 1973. 
He showed that there do exist separable 
Banach spaces that do not possess 
a basis.}}.
Well-known examples of such Banach spaces 
are:
\begin{itemize}
\item $C^{(0)}([0,1])$ (space of continuous 
functions in $[0,1]$) \cite[\S 5.2]{Morrison2001};
\item $L^p(\Omega)$ for $1<p<\infty$ (Lebesgue space 
defined on a compact domain $\Omega\subseteq\mathbb{R}^n$)
\cite[Theorem 2.1]{Bellout};
\item $W^{k,p}(\Omega)$ for $1<p<\infty$ (Sobolev 
space defined on a compact domain 
$\Omega\subseteq\mathbb{R}^n$ with smooth or 
Lipschitz boundary \cite{Figiel,Matveev2002}).
\end{itemize}
Before we present the main results, 
let us briefly recall the definition and 
the basic properties of Schauder bases 
in Banach spaces.}
{
\begin{definition}
\label{def:Schauderbasis}
{\bf (Schauder basis)}
A Schauder basis of a Banach space $X$ is sequence 
of linearly independent elements $\varphi_k\in X$ such 
that every $\theta\in X$ can be uniquely represented 
as 
\begin{equation}
\theta =\sum_{k=1}^\infty a_k([\theta]) \varphi_k,
\label{schouderbasis}
\end{equation}
where $a_k: X \rightarrow \mathbb{R}$ is 
a sequence of bounded linear 
functionals\footnote{{It is shown 
in \cite[p. 20]{Singer1} that 
$1 \leq |a_k|\left\|\varphi_k\right\|_X\leq 2C$ 
for all $k\in \mathbb{N}$, where 
$C\geq 1$ is the so-called basis constant. In the 
case of real separable Hilbert spaces the 
linear functionals $a_k([\theta])$ are given by
by $a_k([\theta])=(\theta,\varphi_k)_H$ (see Eq. \eqref{Theta}), 
and they are obviously bounded.}} 
uniquely determined by the basis $\{\varphi_k\}$. 
\end{definition}
As is well-known, every basis in a Banach 
space is a Schauder basis (see, e.g., 
\cite[p. 20]{Singer1} or 
\cite[Proposition 5.3]{Morrison2001}). 
Hence, hereafter we will drop the adjective 
``Schauder'' when referring to a basis 
in Banach space.
\subsection{Compact subsets of real separable Banach spaces}
\label{sec:compactBanach}
Just like in the case of functional 
approximation in real separable Hilbert spaces, 
all approximation results we present hereafter 
hold in compact subsets of Banach spaces with 
a basis. Characterizing such compact 
subsets, is not as straightforward 
as in the case of Hilbert spaces (see the 
introduction of section \ref{sec:functionalonmetric} 
and Theorem \ref{compactSets}).
Nevertheless, compactness results are 
available in rather general cases. 
For instance, the Arzel\`a-Ascoli
theorem \cite{Rudin} provides necessary and sufficient 
conditions for a set $K\subseteq C^{(0)}(\Omega)$ 
($\Omega$ compact subset of $\mathbb{R}^n$) 
to be pre-compact. Specifically, the theorem states that 
$K$ is pre-compact in the topology induced by 
the uniform norm if and only if $K$ is 
equicontinuous and pointwise bounded. 
By using the Arzel\`a-Ascoli  theorem it is 
straightforward to prove, e.g., that the set of 
Lipschitz-continuous (with the same Lipschitz 
constant) probability density functions on $\Omega$  
is pre-compact in $C^{(0)}(\Omega)$. 
%
%
A similar compactness result, known 
as Kolmogorov-Riesz theorem 
\cite{Hanche-Olsen,Hanche-Olsen2}, can be obtained 
in  $L^p(\mathbb{R}^n)$ and $W^{1,p}(\mathbb{R}^n)$ 
($1\leq p<\infty$). Such theorem can be stated 
as follows. 
\begin{theorem}
\label{lemma:subsetsLp}
{\bf (Compact subsets of $L^p$ \cite{Hanche-Olsen2})}
A subset $K$ of $L^p(\mathbb{R}^n)$  ($1\leq p<\infty$) 
is pre-compact if and only if 
\begin{equation}
\lim_{|h|\rightarrow 0}
\left\|f(x+h)-f(x)\right\|_{L^{p}(\mathbb{R}^n)}=0\qquad \text{and}\qquad 
\lim_{r\rightarrow \infty}
\int_{|x|\geq r} |f(x)|^pdx = 0,
\label{condKK}
\end{equation}
for all $f\in K$.
\end{theorem}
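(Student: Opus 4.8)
The plan is to prove the two implications in \eqref{condKK} separately, with the reverse direction (the two conditions $\Rightarrow$ pre-compactness) carrying essentially all of the work. I read the two limits as holding \emph{uniformly} over $K$, together with the boundedness $\sup_{f\in K}\|f\|_{L^p(\mathbb{R}^n)}<\infty$: this is the formulation used in the cited reference \cite{Hanche-Olsen2}, and it is what the argument actually exploits. Since $L^p(\mathbb{R}^n)$ is complete, pre-compactness is equivalent to total boundedness, so throughout it suffices to produce, for each $\epsilon>0$, a finite $\epsilon$-net for $K$.

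For the forward direction, assume $K$ is pre-compact, hence totally bounded, and in particular bounded. Fix $\epsilon>0$ and choose a finite $(\epsilon/3)$-net $\{f_1,\dots,f_N\}\subseteq K$. Each $f_i$ is a fixed element of $L^p(\mathbb{R}^n)$, so by the (standard) continuity of translation in $L^p$ there is $\delta_i>0$ with $\|\tau_h f_i-f_i\|_{L^p}<\epsilon/3$ whenever $|h|\le\delta_i$, where $\tau_h g(x)=g(x-h)$; likewise there is $r_i$ with $\int_{|x|\ge r_i}|f_i|^p\,dx<(\epsilon/3)^p$. Taking $\delta=\min_i\delta_i$, $r=\max_i r_i$, and using the translation invariance of the $L^p$ norm together with the triangle inequality (approximate a given $f\in K$ by its nearest $f_i$) yields both conditions uniformly on $K$. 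This direction is routine and I would not dwell on it.

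The reverse direction is the heart of the matter, and I would follow the elementary averaging argument of Hanche-Olsen and Holden \cite{Hanche-Olsen,Hanche-Olsen2}, which avoids the Arzel\`a-Ascoli machinery in its estimates. Fix $\epsilon>0$. First use the tightness condition to pick $R$ so large that $\int_{|x|\ge R}|f(x)|^p\,dx<\epsilon^p$ for every $f\in K$. Next introduce the averaging operator $M_r f=f*\rho_r$, where $\rho_r=r^{-n}\mathbf{1}_{[-r/2,r/2]^n}$ is a probability density supported in $\{|h|\le c\,r\}$ with $c=\sqrt{n}/2$. The key estimate is obtained by writing $(M_r f-f)(x)=\int\rho_r(h)\,[f(x-h)-f(x)]\,dh$ and applying Minkowski's integral inequality:
\[
\|M_r f-f\|_{L^p(\mathbb{R}^n)}
\le \int \rho_r(h)\,\|\tau_h f-f\|_{L^p}\,dh
\le \sup_{|h|\le c\,r}\|\tau_h f-f\|_{L^p}.
\]
By the equicontinuity-in-the-mean hypothesis the right-hand side is smaller than $\epsilon$ for \emph{all} $f\in K$ once $r$ is small enough; moreover, by Jensen's inequality the tail estimate transfers from $f$ to $M_r f$, so one may further truncate $M_r f$ to the fixed ball $B_{R'}$ with $R'=R+cr$ at uniformly small cost. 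Thus the set $S=\{(M_r f)\mathbf{1}_{B_{R'}}:f\in K\}$ approximates $K$ to within $\mathcal{O}(\epsilon)$ in $L^p(\mathbb{R}^n)$, uniformly over $K$.

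It then remains to observe that $S$ is pre-compact, and this finite-dimensional reduction is where I expect the main obstacle to lie: one must turn ``$M_r f$ is an average over finitely many cubes of side $r$'' into an honest pre-compactness statement with constants independent of $f\in K$. Two clean routes are available. One may bound both $\|M_r f\|_{L^p}$ and the modulus of continuity of the continuous representative of $M_r f$ in terms of $\sup_{f\in K}\|f\|_{L^p}$, so that $S$ is bounded and equicontinuous on $\overline{B_{R'}}$ and hence relatively compact in $C(\overline{B_{R'}})$, a fortiori in $L^p(B_{R'})$; alternatively, replacing $M_r f$ by its piecewise-constant sample on the grid of side $r$ lands it in a fixed finite-dimensional subspace of $L^p(B_{R'})$, whose bounded subsets are automatically pre-compact, with the sampling error again controlled uniformly by the translation-continuity hypothesis. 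Either way, $S$ is totally bounded, so its finite $\epsilon$-net is a finite $\mathcal{O}(\epsilon)$-net for $K$; since $\epsilon>0$ was arbitrary, $K$ is totally bounded and therefore pre-compact. The recurring difficulty throughout the reverse direction is simply keeping every estimate uniform over $K$.
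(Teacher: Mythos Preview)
The paper does not supply a proof of this theorem: it is quoted verbatim as a cited result from \cite{Hanche-Olsen2} (and \cite{Hanche-Olsen}), with no argument given. There is therefore no ``paper's proof'' to compare against; the relevant benchmark is the cited reference itself.

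Your sketch is correct and follows precisely the elementary averaging argument of Hanche-Olsen and Holden that the paper cites: mollify by a box kernel, control $\|M_r f-f\|_{L^p}$ via Minkowski's integral inequality and the uniform translation-continuity, truncate using equitightness, and finish by a finite-dimensional reduction (either Arzel\`a--Ascoli on the mollified functions or passage to a grid). The forward direction via a finite $\epsilon/3$-net is the standard one. You were also right to flag that the two limits in \eqref{condKK} must be read as holding \emph{uniformly} over $K$ (together with boundedness of $K$); as literally written in the paper, ``for all $f\in K$'' without uniformity is satisfied by every subset of $L^p(\mathbb{R}^n)$ and characterizes nothing. That reading is exactly the one in the cited source, and your proof uses the uniformity in the essential places.
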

The two conditions in \eqref{condKK} are known as 
equicontinuity and equitight conditions.  
Theorem \ref{lemma:subsetsLp} also holds 
in $L^p(\Omega)$, where $\Omega$ is a compact 
subset of $\mathbb{R}^n$.
More generally, one can use well-known 
compact embedding results such as 
the Rellich-Kondrachov theorem \cite[\S 6]{Adams}.
Such theorem states that the Sobolev space 
$W^{k,p}(\Omega)$ defined on 
a compact domain $\Omega\subseteq\mathbb{R}^n$ 
with differentiable boundary is compactly 
embedded in $W^{l,q}(\Omega)$, provided 
$k>l$ and $k-p/n>l-q/n$. This means, for example, 
that a closed sphere in $W^{k,p}(\Omega)$ is pre-compact 
in $L^q(\Omega)$ if $k>(p-q)/n$.}

{
\subsection{Approximation results for nonlinear functionals, functional derivatives and FDEs}
\label{sec:Banach_approx}
Let $X$ be a Banach with basis $\{\varphi_k\}$ and let
$D_m=\text{span}\{\varphi_1,\ldots,\varphi_m\}$.  
Define the linear projection operator
$P_m: X\rightarrow D_m$ 
\begin{equation}
P_m \theta = \sum_{k=1}^m a_k([\theta]) \varphi_k.
\label{PmBanach}
\end{equation}
It is straightforward to show that $P_m$ is bounded 
and that $P_m \theta$ converges uniformly to $\theta$ 
in every compact subset of $X$ as $m$ goes to infinity. 
In fact, we have the following 
\begin{lemma} 
\label{lemma:ApproxProperty}
Let $X$ be a Banach space with 
basis $\{\varphi_k\}$, and let $K$ be compact 
subset of $X$. Then for each $\epsilon>0$ 
there exists $m_\epsilon\in \mathbb{N}$ such that  
\begin{equation}
\left\|\theta - P_m \theta\right\|_X< \epsilon,
\qquad \forall m\geq m_\epsilon,\qquad \forall \theta\in K. 
\label{eq:AP}
\end{equation}
\end{lemma}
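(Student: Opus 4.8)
The plan is to combine two ingredients. For each fixed $\theta\in X$ the partial sums $P_m\theta$ converge to $\theta$ in norm, by the very definition of a basis (Definition \ref{def:Schauderbasis}); this is merely pointwise convergence of the operator sequence. To upgrade it to convergence that is uniform over the compact set $K$, I would use the fact that the projections $\{P_m\}$ are \emph{uniformly} bounded, $\sup_m\|P_m\|=:C<\infty$. These two facts, together with total boundedness of $K$, give the claim by the standard passage from pointwise to uniform convergence on a totally bounded set.

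First I would establish the uniform bound $\sup_m\|P_m\|<\infty$. Each $P_m=\sum_{k=1}^m a_k([\cdot])\varphi_k$ is bounded, being a finite linear combination of the bounded coordinate functionals $a_k$ (Definition \ref{def:Schauderbasis}); and for every $\theta\in X$ the sequence $\{P_m\theta\}$ converges and is therefore norm-bounded. By the Banach--Steinhaus (uniform boundedness) principle it follows that $\sup_m\|P_m\|=:C<\infty$. This constant is precisely the basis constant recorded in the footnote to Definition \ref{def:Schauderbasis} (see also \cite{Singer1}).

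Next I would exploit compactness of $K$ through total boundedness. Fix $\epsilon>0$ and set $\delta=\epsilon/(C+2)$. Since $K$ is compact it is totally bounded, so there is a finite $\delta$-net $\{\theta^{(1)},\ldots,\theta^{(N)}\}\subseteq K$ such that every $\theta\in K$ lies within $\delta$ of some $\theta^{(i)}$. Applying pointwise convergence at each of the finitely many net points yields an index $m_i$ with $\|\theta^{(i)}-P_m\theta^{(i)}\|_X<\delta$ for all $m\geq m_i$; taking $m_\epsilon=\max_i m_i$ then handles all net points simultaneously.

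Finally, for an arbitrary $\theta\in K$ and any $m\geq m_\epsilon$, I would choose a net point $\theta^{(i)}$ with $\|\theta-\theta^{(i)}\|_X<\delta$ and split
\[
\|\theta-P_m\theta\|_X\leq \|\theta-\theta^{(i)}\|_X+\|\theta^{(i)}-P_m\theta^{(i)}\|_X+\|P_m(\theta^{(i)}-\theta)\|_X,
\]
bounding the three terms by $\delta$, $\delta$, and $C\delta$ respectively, for a total of $(C+2)\delta=\epsilon$, uniformly in $\theta$. The only nonroutine step is the uniform boundedness of the projections in the second paragraph; once $C<\infty$ is in hand, the rest is the routine $\delta$-net argument and is purely mechanical.
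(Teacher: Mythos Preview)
Your argument is correct and self-contained. The paper does not actually supply a proof of this lemma: it simply identifies \eqref{eq:AP} as the ``approximation property'' and refers the reader to \cite[p.~638]{James} for a proof, adding only the remark that in the Hilbert-space case the result follows from monotonicity of $\|\theta-P_m\theta\|_H$ (via Parseval) together with Dini's theorem. Your Banach--Steinhaus plus $\delta$-net argument is precisely the standard proof one finds in the cited literature, and it works in the full Banach-space generality of the statement; the paper's Dini shortcut is specific to the Hilbert setting (where monotonicity is available and $\|P_m\|=1$ is automatic). One minor remark: your justification that each individual $P_m$ is bounded relies on the coordinate functionals $a_k$ being bounded, which the paper's Definition~\ref{def:Schauderbasis} does assert; in many texts this boundedness is itself a \emph{consequence} of the uniform bound on the $P_m$ (obtained by renorming and the open mapping theorem), so be aware of the logical order if you port the argument elsewhere.
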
}
%


\noindent
{The uniform convergence 
result \eqref{eq:AP} is known as 
``approximation property'' in Banach 
space theory \cite{McArthur} 
(see \cite[p. 638]{James} for a proof).
Hence, Lemma \ref{lemma:ApproxProperty} shows 
that every Banach space with a basis has the 
approximation property. We remark that in a real 
separable Hilbert space the uniform approximation property  
follows immediately from the monotonicity 
of the sequence $f_m([\theta])=\left\|\theta- 
P_m\theta\right\|_H$ (Parseval's identity 
implies $f_{m+1}([\theta])\leq f_{m}([\theta])$), 
and Dini's theorem.}
{
\begin{lemma}
\label{UCONVHS_BANACH}
{\bf (Uniform convergence of functional approximations)} 
Let $X$ be a real Banach space with a basis,
$K$ a compact subset of $H$, and $P_m$ the
projection operator \eqref{PmBanach}. 
If $F$ is a continuous functional on $X$, 
then the sequence $F([P_m\theta])$
converges uniformly to $F([\theta])$ on $K$, i.e., 
for all $\epsilon>0$ there exists 
$m_\epsilon\in\mathbb{N}$ such that 
\begin{equation}
\left| F([\theta])-F([P_m\theta])\right| 
\leq \epsilon, \qquad 
\forall m\geq m_\epsilon,\qquad \forall \theta\in K.
\end{equation}
\end{lemma}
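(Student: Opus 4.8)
The plan is to reduce the Banach-space statement to the uniform convergence of the projections $P_m\theta$ established in Lemma \ref{lemma:ApproxProperty}, combined with the Heine--Cantor theorem. Since $K$ is compact and $F$ is continuous on $X$, the restriction of $F$ to $K$ is uniformly continuous; this is the Banach-space analogue of the argument underlying Lemma \ref{UCONVHS}, but the proof is actually cleaner here because we do not need Prenter's original compactness-of-the-image argument. The key observation is that the set $\widetilde{K}=K\cup\{P_m\theta : \theta\in K,\ m\in\mathbb{N}\}$ is itself pre-compact: its closure is compact because $P_m\theta\to\theta$ uniformly and each $P_m$ is bounded, so the family of projected points accumulates only on $K$ together with finitely many bounded ``layers''.

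First I would fix $\epsilon>0$ and invoke uniform continuity of $F$ on a suitable compact superset of $K$. Concretely, let $C\geq 1$ be the basis constant (so $\|P_m\|\leq C$ for all $m$, by the uniform boundedness of the partial-sum projections associated with a Schauder basis). Then every point $P_m\theta$ with $\theta\in K$ lies in the bounded set $\{\theta' : \|\theta'\|_X\leq C\sup_{\theta\in K}\|\theta\|_X\}$. More usefully, I would argue that $\overline{\widetilde{K}}$ is compact directly: given any sequence $\{P_{m_j}\theta_j\}$, pass to a subsequence along which $\theta_j\to\theta^*\in K$ (compactness of $K$); if the indices $m_j$ are bounded one extracts a convergent sub-subsequence using continuity of each fixed $P_m$, and if $m_j\to\infty$ one uses the uniform estimate $\|P_{m_j}\theta_j-\theta^*\|_X\leq \|P_{m_j}\theta_j-P_{m_j}\theta^*\|_X+\|P_{m_j}\theta^*-\theta^*\|_X\leq C\|\theta_j-\theta^*\|_X+\|P_{m_j}\theta^*-\theta^*\|_X\to 0$, which shows $P_{m_j}\theta_j\to\theta^*$. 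Hence $\overline{\widetilde{K}}$ is compact and $F$ is uniformly continuous on it.

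Next I would combine the two ingredients. By uniform continuity of $F$ on $\overline{\widetilde{K}}$, there exists $\delta>0$ such that $\|\theta_1-\theta_2\|_X\leq\delta$ with $\theta_1,\theta_2\in\overline{\widetilde{K}}$ implies $|F([\theta_1])-F([\theta_2])|\leq\epsilon$. By Lemma \ref{lemma:ApproxProperty} (the approximation property), there exists $m_\epsilon\in\mathbb{N}$ such that $\|\theta-P_m\theta\|_X<\delta$ for all $m\geq m_\epsilon$ and all $\theta\in K$. Since both $\theta$ and $P_m\theta$ belong to $\overline{\widetilde{K}}$, it follows that
\begin{equation}
\left|F([\theta])-F([P_m\theta])\right|\leq\epsilon,\qquad \forall m\geq m_\epsilon,\quad \forall\theta\in K,
\end{equation}
which is exactly the claimed uniform convergence, and the proof concludes.

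I expect the main obstacle to be the compactness of $\overline{\widetilde{K}}$, i.e., verifying that adjoining all the projected points to $K$ does not destroy pre-compactness. In the Hilbert-space setting this was bypassed because one had the equi-small-tail characterization (Theorem \ref{compactSets}) and monotonicity of $\|\theta-P_m\theta\|_H$; in a general Banach space with only a Schauder basis the projections need not be contractions, so I must lean on the uniform bound $\|P_m\|\leq C$ (basis constant) and the uniform convergence $P_m\theta\to\theta$ on $K$ to control the accumulation behavior. Once that compactness is secured, the remainder is a routine $\epsilon$--$\delta$ splicing of uniform continuity with the approximation property, precisely mirroring the Hilbert-space Lemma \ref{UCONVHS} but with $P_m$ now a bounded (not orthogonal) projection.
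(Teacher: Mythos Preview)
Your proposal is correct and follows essentially the same approach the paper points to: the paper defers to Prenter's Hilbert-space argument with the single substitution of Lemma~\ref{lemma:ApproxProperty} (the approximation property) for the Hilbert-space uniform convergence of $P_m\theta$. Your write-up makes explicit the one step Prenter's argument needs but the paper leaves implicit, namely the pre-compactness of $\widetilde{K}=K\cup\{P_m\theta:\theta\in K,\ m\in\mathbb{N}\}$, and your sequential argument using the uniform bound $\|P_m\|\leq C$ (basis constant) together with the dichotomy on whether $m_j$ is bounded is clean and correct.
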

\vspace{0.5cm}
\noindent
The proof of this Lemma is essentially the same 
as the proof of Lemma 5.1 in \cite{Prenter1970}. 
We only need to replace the first equation at 
page 380 in \cite{Prenter1970} with \eqref{eq:AP}.}
{As before, we will refer to  
$F([P_m \theta])$ as ``cylindrical approximation''\footnote{{A 
cylinder functional on a real Banach space $X$ admitting 
a basis is a functional $f$ such that 
$F([\theta])=F([P_m\theta])$ for all $\theta\in X$.
This definition relies on the fact that 
$F([P_m \theta])$ is a multivariate function 
of the coefficients $a_k([\theta])$, which 
define the cylinder set \cite[p. 55]{Neerven}
\begin{equation}
\{\theta\in X:\, (a_1([\theta]),\ldots,a_m([\theta]))\in B\},
\end{equation}
where $B$ is a Borel set of $\mathbb{R}^m$.}} 
of $F([\theta])$.}

{
Most of the approximation results we obtained for 
nonlinear functionals, Fr\'echet derivatives, 
functional derivatives and FDEs in compact subsets of 
in real separable Hilbert spaces hold also 
in compact subsets Banach spaces admitting a 
basis. Hereafter we list the most 
important ones. The proofs are the same as in 
the case of real separable Hilbert 
spaces, and therefore omitted.
\begin{lemma}
\label{COMPFRE_Banach}
{\bf (Compactness of first-order Fr\'echet derivatives)} 
Let $K$ be a compact subset of a real 
Banach space $X$ admitting a basis,
and let $F([\theta])$ be a continuous 
real- or complex-valued 
functional on $X$. If the Fr\'echet 
derivative $F'([\theta^*])$ exists 
at $\theta^*\in K$, then it is a compact linear 
operator.
\end{lemma}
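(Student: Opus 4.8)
The plan is to reproduce the contradiction argument used for the Hilbert-space version in Lemma \ref{COMPFRE}, after checking that every ingredient of that argument is purely metric/normed in nature and therefore survives the passage from a Hilbert space to a Banach space $X$ admitting a basis. The two structural facts I would invoke are: (i) a continuous functional on a compact metric space is completely continuous (Definition \ref{def:complete_continuity}), which is a statement about metric spaces and makes no reference to an inner product; and (ii) since the codomain $\mathbb{F}$ is finite dimensional, a linear operator $F'([\theta^*]):X\to\mathbb{F}$ is compact if and only if it is bounded, because in $\mathbb{F}$ pre-compactness coincides with boundedness. Thus ``$F'([\theta^*])$ is not compact'' is the same as ``$F'([\theta^*])$ is unbounded on the unit ball of $X$.''

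First I would assume, for contradiction, that $F'([\theta^*])$ is not compact. Then $\{F'([\theta^*])\theta:\|\theta\|_X\le 1\}$ is an unbounded subset of $\mathbb{F}$, from which I would extract a sequence $\{\theta_k\}$ with $\|\theta_k\|_X\le1$ whose images are pairwise $\epsilon$-separated, i.e.\ $|F'([\theta^*])\theta_k-F'([\theta^*])\theta_j|\ge\epsilon$ for all $k\ne j$ (this extraction uses only that an unbounded set of scalars contains a separated subsequence, so no inner product or reflexivity is required). Next I would use Fr\'echet differentiability of $F$ at $\theta^*$ to choose $\delta>0$ so that $|F([\theta^*+\eta])-F([\theta^*])-F'([\theta^*])\eta|\le(\epsilon/4)\|\eta\|_X$ whenever $\|\eta\|_X\le\delta$, and then pick $\tau>0$ small enough that $\|\tau\theta_k\|_X\le\delta$ for all $k$. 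The triangle inequality, applied exactly as in \eqref{imp}, then yields
\begin{equation}
\bigl|F([\theta^*+\tau\theta_k])-F([\theta^*+\tau\theta_j])\bigr|\ge\epsilon\tau-\frac{\tau\epsilon}{4}-\frac{\tau\epsilon}{4}=\frac{\epsilon\tau}{2},\qquad k\ne j.
\end{equation}
Since $\{\theta^*+\tau\theta_k\}$ is a bounded sequence, its image under $F$ can contain no convergent (Cauchy) subsequence, contradicting the complete continuity of $F$; this forces $F'([\theta^*])$ to be compact.

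The step I would treat most carefully is the final contradiction, where complete continuity is invoked: one must ensure that the perturbed points $\theta^*+\tau\theta_k$ lie in the compact set $K$ (equivalently, in a set on which $F$ is completely continuous), which is precisely where the argument of Lemma \ref{COMPFRE} uses the smallness of $\tau$ together with the membership $\theta^*+\tau\theta_k\in K$. Apart from this bookkeeping, no new difficulty arises: the proof of Lemma \ref{COMPFRE} is phrased entirely in terms of norms and of complete continuity, and it never uses orthogonal projections, Parseval's identity, or reflexivity of the space. This is exactly why the result extends unchanged to Banach spaces with a basis that are \emph{not} reflexive, such as $C^{(0)}([0,1])$; the role of the basis here is only to guarantee separability and the approximation property exploited elsewhere, not to establish compactness of the derivative at a single point.
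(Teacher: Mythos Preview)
Your proposal is correct and follows essentially the same approach as the paper: the paper states explicitly that the proofs of the Banach-space results in section~\ref{sec:Banach_approx}, including Lemma~\ref{COMPFRE_Banach}, ``are the same as in the case of real separable Hilbert spaces, and therefore omitted,'' and your write-up reproduces the contradiction argument of Lemma~\ref{COMPFRE} while checking that no Hilbert-specific structure is used. Your closing remark that the Schauder basis plays no role in this particular lemma is also accurate; the basis hypothesis is carried along only for consistency with the ambient setting of section~\ref{sec:Banach}.
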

\begin{lemma}
\label{LMVT_Banach}
{\bf(Convergence rate of functional approximations)} 
Let $F$ be a real-valued, continuously 
differentiable functional on a compact and
convex subset $K$ of a real Banach space $X$ 
admitting a basis. Then for all $\theta\in K$ and for any 
finite-dimensional projection 
$P_m$ of the form \eqref{PmBanach} 
we have
\begin{equation}
\left| F([\theta])-F([P_m\theta])\right|\leq 
\sup_{\eta\in K}\left\| F'([\eta])\right\|
\left\|\theta-P_m\theta\right\|_X.
\label{estimateMVT_Banach}
\end{equation}
In particular, $F([P_m\theta])$ converges uniformly 
to $F([\theta])$ in $K$ at the same rate as $P_m \theta$ 
converges to $\theta$ in $X$.
\end{lemma}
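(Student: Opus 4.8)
The plan is to carry over the Hilbert-space argument of Lemma \ref{LMVT} essentially verbatim, the only structural change being that the orthogonal projection is replaced by the basis projection \eqref{PmBanach} and $\|\cdot\|_H$ by $\|\cdot\|_X$. Concretely, I would first establish the Banach analogue of the mean value Theorem \ref{MVT}: for a continuously differentiable real-valued $F$ on a convex set $K\subseteq X$ and any $\theta_1,\theta_2\in K$,
\begin{equation}
\left|F([\theta_1])-F([\theta_2])\right|\le \sup_{\eta\in K}\left\|F'([\eta])\right\|\,\left\|\theta_1-\theta_2\right\|_X.
\label{MVTBanach}
\end{equation}
Once \eqref{MVTBanach} is in hand, the lemma follows immediately by setting $\theta_1=\theta$ and $\theta_2=P_m\theta$, exactly as in the Hilbert case.

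To prove \eqref{MVTBanach} I would reduce to a one-dimensional statement by restricting $F$ to the segment $\theta_t=\theta_2+t(\theta_1-\theta_2)$, $t\in[0,1]$, which lies in $K$ by convexity. The scalar function $g(t)=F([\theta_t])$ is continuously differentiable with $g'(t)=F'([\theta_t])(\theta_1-\theta_2)$ by the chain rule for Fr\'echet derivatives, so the fundamental theorem of calculus gives $F([\theta_1])-F([\theta_2])=\int_0^1 F'([\theta_t])(\theta_1-\theta_2)\,dt$. Bounding the integrand by $\|F'([\theta_t])\|\,\|\theta_1-\theta_2\|_X$ and then by the supremum over $K$ yields \eqref{MVTBanach}. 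That supremum is finite because Lemma \ref{COMPFRE_Banach} guarantees each $F'([\eta])$ is a (compact, hence bounded) linear operator, while the continuous differentiability of $F$ makes $\eta\mapsto\|F'([\eta])\|$ a continuous map on the compact set $K$, so it attains a finite maximum $M$.

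The main obstacle is set-theoretic rather than analytic: the inequality \eqref{MVTBanach} requires the whole segment $[\theta,P_m\theta]$ to lie in $K$, and in particular $P_m\theta\in K$. In a Hilbert space this is automatic for balls centered at the origin, since the orthogonal projection satisfies $\|P_m\theta\|_H\le\|\theta\|_H$; in a general Banach space, however, the basis projection \eqref{PmBanach} need not be a contraction, one only having $\|P_m\|\le C$ for the basis constant $C\ge 1$, so $P_m\theta$ may leave $K$. I would handle this by restricting attention to convex compact sets that are $P_m$-invariant (for instance sets cut out by constraints on the coefficients $a_k([\theta])$, for which $P_m\theta$ merely truncates the tail), or, more generally, by replacing the supremum in \eqref{MVTBanach} with one over a fixed enlarged compact convex set containing every segment $[\theta,P_m\theta]$ with $\theta\in K$; the enlarged set is still compact, so $M$ stays finite and the stated rate is unaffected.

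Finally, the convergence-rate conclusion follows by combining the estimate with Lemma \ref{lemma:ApproxProperty}: since $\|\theta-P_m\theta\|_X\to 0$ uniformly over $\theta\in K$, the bound $\left|F([\theta])-F([P_m\theta])\right|\le M\,\|\theta-P_m\theta\|_X$ shows that $F([P_m\theta])$ converges to $F([\theta])$ uniformly on $K$ at the same rate, up to the constant $M$, at which $P_m\theta$ converges to $\theta$ in $X$.
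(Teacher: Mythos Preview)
Your proposal is correct and follows the paper's approach exactly. The paper does not give a separate proof of Lemma~\ref{LMVT_Banach}: it simply states that ``the proofs are the same as in the case of real separable Hilbert spaces, and therefore omitted,'' and in the Hilbert case (Lemma~\ref{LMVT}) the entire proof is the single line ``follows directly from the mean value Theorem~\ref{MVT} by setting $\theta_1=\theta$ and $\theta_2=P_m\theta$.'' Your reduction to a one-dimensional function $g(t)=F([\theta_t])$ is the standard way to establish the mean value inequality, which the paper uses but does not spell out.

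You also flag a genuine technical point that the paper passes over in silence, in both the Hilbert and the Banach versions: applying the mean value theorem requires the segment $[\theta,P_m\theta]$ to lie in $K$, and in particular $P_m\theta\in K$. The paper implicitly assumes this without comment. Your observation that this is automatic for orthogonal projections onto balls in Hilbert space but not for Schauder-basis projections in a general Banach space is correct, and your two suggested fixes (restrict to $P_m$-invariant convex compacta, or take the supremum of $\|F'\|$ over a fixed enlarged compact convex set containing all the relevant segments) are both sound ways to close the gap.
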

\noindent 
Convergence rate estimates for 
$\left\|\theta-P_m\theta\right\|_X$ are 
available, e.g., in \cite{DeVore,Matveev2002,Figiel}.
\begin{theorem}
\label{thm:derivatives_Banach}
{\bf (Uniform approximation of first-order Fr\'echet  derivatives)}
Let $X$ be a real Banach space admitting a basis,
$K$ a compact subset of $X$, $\theta\in K$ 
and $P_m$ the projection operator \eqref{PmBanach}. 
If $F$ is continuously differentiable on $K$ with Fr\'echet 
derivative $F'([\theta])$, then the sequence of operators 
$F'([P_m \theta])$ converges uniformly to 
$F'([\theta])$. In other words, for all $\epsilon>0$ 
there exists 
$m_\epsilon\in \mathbb{N}$ 
such that  
\begin{equation}
\left\|F'([\theta]) - F'([P_m\theta]) \right\|=
\sup_{\substack{\eta\in H\\\eta\neq 0}} \frac{\left| 
F'([\theta])\eta -F'([P_m\theta])\eta \right|}
{\left\|\eta \right\|_X}<\epsilon,
\end{equation}
for all $m\geq m_\epsilon$, and for all $\theta \in K$.
\end{theorem}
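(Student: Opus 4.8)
The plan is to reproduce verbatim the structure of the proof of Theorem \ref{thm:derivatives}, replacing each Hilbert-space ingredient by its Banach-space counterpart established above. First I would fix an arbitrary direction $\eta\in X$ and introduce the scalar functional $G_\eta([\theta])=F'([\theta])\eta$. Because $F$ is continuously differentiable, $G_\eta$ is, for each fixed $\eta$, a continuous nonlinear functional of $\theta$ on $X$; hence Lemma \ref{UCONVHS_BANACH} applies and yields uniform convergence in $\theta$ on the compact set $K$, i.e.\ for every $\eta$ and every $\epsilon_\eta>0$ there is $m_\eta$ with $|G_\eta([\theta])-G_\eta([P_m\theta])|<\epsilon_\eta$ for all $m\ge m_\eta$ and all $\theta\in K$. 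This is exactly the Banach analogue of inequality \eqref{ineq}.

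Next I would invoke Lemma \ref{COMPFRE_Banach} to conclude that $F'([\theta])$ is a compact, and in particular bounded, linear operator for each $\theta\in K$. Consequently the difference $F'([\theta])-F'([P_m\theta])$ is a bounded linear functional on $X$, that is, an element of the dual $X^*$, and the quantity we must estimate is precisely its dual norm, $\sup_{\eta\ne 0}|G_\eta([\theta])-G_\eta([P_m\theta])|/\|\eta\|_X$. For each fixed $\theta$ this gives the Banach counterpart of bound \eqref{bd}, namely $|G_\eta([\theta])-G_\eta([P_m\theta])|\le \gamma_m(\theta)\|\eta\|_X$ with $\gamma_m(\theta)=\|F'([\theta])-F'([P_m\theta])\|$.

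The hard part is upgrading the convergence of the first step, which is only pointwise in the direction $\eta$, to convergence of $\gamma_m(\theta)$ uniformly over $\theta\in K$, since the Hilbert-space device of expanding $\eta$ in an orthonormal basis is unavailable and the unit ball of $X$ is not compact. I would handle this by using continuous differentiability directly: the map $\theta\mapsto F'([\theta])$ is norm-continuous from $K$ into $X^*$, and since $K$ is compact the Heine--Cantor theorem makes it uniformly continuous there. Combining this with the approximation property of Lemma \ref{lemma:ApproxProperty}, which gives $\sup_{\theta\in K}\|\theta-P_m\theta\|_X\to 0$, then forces $\gamma_m(\theta)<\epsilon$ simultaneously for all $\theta\in K$ once $m$ is large, which is the claimed uniform operator convergence. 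The one technical point to check is that $P_m\theta$ need not lie in $K$, so uniform continuity must be applied on a set containing both $\theta$ and its projections; this is harmless because $\{P_m\theta:\theta\in K,\ m\in\mathbb{N}\}\cup K$ is bounded (indeed relatively compact, as $P_m\theta\to\theta$ uniformly on $K$), so one only needs $F$ to be continuously differentiable on a bounded neighbourhood of $K$, which is already implicit in the hypotheses.
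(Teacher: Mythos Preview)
Your proposal is correct, and in its decisive step it actually goes beyond what the paper does. The paper gives no separate proof for this Banach-space statement; it simply asserts that the Hilbert-space argument of Theorem~\ref{thm:derivatives} carries over verbatim. You begin by reproducing that argument (introduce $G_\eta([\theta])=F'([\theta])\eta$, apply Lemma~\ref{UCONVHS_BANACH} direction by direction, invoke Lemma~\ref{COMPFRE_Banach} for boundedness), but you then rightly flag that the paper's ``combining \eqref{ineq} and \eqref{bd}'' step is not fully justified as written: pointwise-in-$\eta$ convergence with $m_\eta$ depending on $\eta$, together with mere boundedness of the difference, does not by itself force convergence of the dual norm, since the unit ball of $X$ is not compact. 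Your remedy---use the hypothesis of continuous differentiability to conclude that $\theta\mapsto F'([\theta])$ is norm-continuous into $X^*$, apply Heine--Cantor on a compact set, and then feed in the uniform approximation property of Lemma~\ref{lemma:ApproxProperty}---is correct and in fact self-contained; the preliminary $G_\eta$ steps become unnecessary once you argue this way. Your treatment of the technicality that $P_m\theta$ need not lie in $K$ (enlarge to the relatively compact set $K\cup\{P_m\theta:\theta\in K,\ m\in\mathbb{N}\}$, which works because $P_m\theta\to\theta$ uniformly on $K$) is the right fix; note only that it requires $F$ to be continuously differentiable on a neighbourhood of $K$ rather than literally on $K$, an assumption the paper also uses implicitly when it evaluates $F'([P_m\theta])$.
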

\begin{lemma}
\label{thm:higherderivatives_Banach}
{\bf (Convergence rate of first-order Fr\'echet derivatives)}
Let $X$ be a real Banach admitting a 
basis, and let $F([\theta])$ be a 
nonlinear functional with continuous 
first- and second-order Fr\'echet derivatives.
Then for all $\theta$ in a convex compact
subset $K$ of $X$, and for any projection 
$P_m$ of the form \eqref{PmBanach} we have
\begin{equation}
\left\| F'([\theta])-F'([P_m\theta])\right\| \leq 
{\sup_{\zeta\in K}}\left\| F'' ([\zeta])\right\|\left\|\theta-P_m\theta\right\|_X.
\label{FfD2_banach}
\end{equation}
In particular, $F'([P_m\theta])$ converges uniformly 
to $F'([\theta])$ in $K$ at the same rate 
as $P_m \theta$ converges to $\theta$.
\end{lemma}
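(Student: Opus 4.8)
The plan is to reproduce, almost verbatim, the Hilbert-space argument behind Lemma~\ref{thm:higherderivatives}, substituting the Banach norm $\left\|\cdot\right\|_X$ for $\left\|\cdot\right\|_H$ throughout and invoking the approximation property (Lemma~\ref{lemma:ApproxProperty}) in place of the Parseval-based estimates that were available in the Hilbert setting. First I would fix an arbitrary nonzero $\eta\in X$ and introduce the scalar-valued auxiliary functional $G_\eta([\theta])=F'([\theta])\eta$. Because $F$ has a continuous second-order Fr\'echet derivative, $G_\eta$ is itself continuously Fr\'echet differentiable on $X$, with derivative $G'_\eta([\theta])\xi=F''([\theta])\eta\xi$, i.e.\ the second Fr\'echet derivative of $F$ regarded as a continuous bilinear form on $X\times X$.

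Next I would apply the mean value inequality on the convex compact set $K$ to $G_\eta$, evaluated at the endpoints $\theta_1=\theta$ and $\theta_2=P_m\theta$. Convexity of $K$ guarantees that the segment $t\theta+(1-t)P_m\theta$ ($t\in[0,1]$) stays in $K$, so the inequality
\[
\left|G_\eta([\theta])-G_\eta([P_m\theta])\right|\leq \sup_{\zeta\in K}\left\|G'_\eta([\zeta])\right\|\left\|\theta-P_m\theta\right\|_X
\]
holds. Writing $\left\|G'_\eta([\zeta])\right\|\leq \left\|F''([\zeta])\right\|\,\left\|\eta\right\|_X$, dividing both sides by $\left\|\eta\right\|_X$, and taking the supremum over all nonzero $\eta$ converts this pointwise scalar bound into the operator-norm estimate~\eqref{FfD2_banach}.

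The remaining routine work is to verify that $\sup_{\zeta\in K}\left\|F''([\zeta])\right\|$ is finite. Since $F''([\zeta])$ is a continuous bilinear form it is bounded for each fixed $\zeta$, and the map $\zeta\mapsto\left\|F''([\zeta])\right\|$ is continuous on the compact set $K$, hence attains a finite maximum there. With the operator-norm estimate in hand, uniform convergence of $F'([P_m\theta])$ to $F'([\theta])$ on $K$ at the same rate as $\left\|\theta-P_m\theta\right\|_X\to 0$ follows directly from Lemma~\ref{lemma:ApproxProperty}.

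The only genuine obstacle relative to the Hilbert case is the mean value step itself: Theorem~\ref{MVT} was stated in a real separable Hilbert space, so here I would instead invoke the standard mean value inequality for Fr\'echet-differentiable maps into $\mathbb{R}$ or $\mathbb{C}$ (valid because $G_\eta$ is scalar-valued and the segment lies in the convex set $K$). Everything else is a faithful transcription of the proof of Lemma~\ref{thm:higherderivatives}, which is precisely why the paper is entitled to omit it.
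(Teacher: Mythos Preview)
Your proposal is correct and follows essentially the same argument the paper uses for the Hilbert-space version (Lemma~\ref{thm:higherderivatives}), which the paper explicitly says carries over verbatim to the Banach setting. The only addition you make---noting that Theorem~\ref{MVT} was stated for Hilbert spaces and appealing instead to the general mean value inequality for scalar-valued Fr\'echet-differentiable maps on a convex set---is exactly the right patch, and your observation that $\zeta\mapsto\|F''([\zeta])\|$ attains its supremum on the compact $K$ matches the paper's reasoning.
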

Regarding the extension of the approximation result for 
the first-order functional derivative, i.e., Lemma \ref{FunctionalDerivativeApprox}, we can leverage various
generalizations of the Riesz representation theorem 
\eqref{first_order_L} to specific Banach spaces. 
Hereafter we consider the $L^p$ generalization. 
More general versions may involve measure theory, e.g., 
in the case of spaces of continuous functions defined 
on compact subsets of $\mathbb{R}^n$ (see Eq. \eqref{Haus}).
\begin{lemma}
\label{FunctionalDerivativeApprox_BANACH}
{\bf (Uniform approximation of first-order functional derivatives)}
Let $\Omega$ be a compact subset of $\mathbb{R}^n$,  
$K$ a compact subset of $L^p(\Omega)$ ($1< p<\infty$),
and $P_m$ the projection operator \eqref{PmBanach}. 
If $F$ is continuously differentiable on $K$ 
with Fr\'echet derivative $F'([\theta])$, then the 
sequence $\delta F([P_m\theta])/\delta \theta (x)$
converges uniformly to 
$\delta F([\theta])/\delta \theta (x)$
in $L^q(\Omega)$, where $1/p +1/q=1$.
In other words, for all $\epsilon>0$ there 
exists $m_\epsilon \in \mathbb{N}$  
such that  for all $m\geq m_\epsilon$
\begin{equation}
\left\| \frac{\delta F([\theta])}{\delta 
\theta(x)}-\frac{\delta F([P_m\theta])}
{\delta \theta(x)}\right\|_{L^q(\Omega)} 
<\epsilon, \qquad \forall \theta\in K.
\label{67_banach}
\end{equation}
\end{lemma}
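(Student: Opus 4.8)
The plan is to mirror the Hilbert-space argument of Lemma \ref{FunctionalDerivativeApprox}, replacing the Riesz representation in $H$ with the duality $(L^p(\Omega))^{*}\cong L^q(\Omega)$ that holds for $1<p<\infty$. First I would observe that, by Lemma \ref{COMPFRE_Banach}, the Fr\'echet derivative $F'([\theta])$ is a compact---hence bounded---linear functional on $L^p(\Omega)$ for every $\theta\in K$, and likewise $F'([P_m\theta])$ is bounded for every $m$. Since the dual of $L^p(\Omega)$ is isometrically $L^q(\Omega)$ when $1<p<\infty$, the $L^p$ Riesz representation theorem produces a unique function $\delta F([\theta])/\delta\theta(x)\in L^q(\Omega)$ such that $F'([\theta])\eta=\int_{\Omega}(\delta F([\theta])/\delta\theta(x))\,\eta(x)\,dx$ for all $\eta\in L^p(\Omega)$. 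This is precisely the integral representation \eqref{FGD} specialized to the $L^p$--$L^q$ pairing, and the same holds with $\theta$ replaced by $P_m\theta$.

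The key step is to exploit that the Riesz map sending a functional to its $L^q$ density is a linear isometry from $(L^p(\Omega))^{*}$ onto $L^q(\Omega)$. Applying it to the difference functional $\eta\mapsto (F'([\theta])-F'([P_m\theta]))\eta$, whose density is $\delta F([\theta])/\delta\theta(x)-\delta F([P_m\theta])/\delta\theta(x)$ by linearity, yields the identity
\begin{equation}
\left\| \frac{\delta F([\theta])}{\delta\theta(x)} - \frac{\delta F([P_m\theta])}{\delta\theta(x)} \right\|_{L^q(\Omega)} = \left\| F'([\theta]) - F'([P_m\theta]) \right\|,
\nonumber
\end{equation}
where the right-hand side is the operator (dual) norm. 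Equivalently, one argues as in the proof of Lemma \ref{FunctionalDerivativeApprox} that the $L^q$ norm of the density is the smallest constant $M([\theta])$ for which $|(F'([\theta])-F'([P_m\theta]))\eta|\le M([\theta])\|\eta\|_{L^p(\Omega)}$ holds for all $\eta$, and this smallest constant is exactly the dual norm.

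Finally, I would invoke Theorem \ref{thm:derivatives_Banach}, which already establishes that $\|F'([\theta])-F'([P_m\theta])\|<\epsilon$ uniformly for $\theta\in K$ once $m\ge m_\epsilon$. Combining this with the isometry identity gives \eqref{67_banach} with the same $m_\epsilon$. The only genuinely delicate point is the restriction $1<p<\infty$: it is needed both for the dual of $L^p(\Omega)$ to be \emph{exactly} $L^q(\Omega)$, so that the functional derivative is an honest $L^q$ density rather than a signed measure (as would arise for $p=\infty$ or on $C^{(0)}(\Omega)$, cf.\ \eqref{Haus}), and for the representing map to be a surjective isometry, which is what converts operator-norm convergence into $L^q$ convergence. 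Everything else is a verbatim transcription of the Hilbert-space proof with the inner product $(\cdot,\cdot)_H$ replaced by the $L^p$--$L^q$ duality pairing.
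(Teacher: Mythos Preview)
Your proposal is correct and follows essentially the same route as the paper: invoke Lemma \ref{COMPFRE_Banach} to get boundedness of $F'([\theta])$, use the $L^p$--$L^q$ Riesz representation to produce the density in $L^q(\Omega)$, and then convert the uniform operator-norm convergence from Theorem \ref{thm:derivatives_Banach} into $L^q$ convergence via the identification of the dual norm with the $L^q$ norm of the density. Your phrasing in terms of the Riesz map being an isometry is slightly cleaner than the paper's ``smallest constant $M([\theta])$'' formulation, but you yourself note they are equivalent, and the underlying argument is identical.
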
}
\begin{proof}
{By Lemma \ref{COMPFRE_Banach} 
the Fr\'echet derivative $F'(\theta)$ is a 
compact linear operator in $L^p(\Omega)$ 
for each $\theta \in K$. Hence, $F'([\theta])\eta$ is a bounded 
linear functional in $L^{p}(\Omega)$ for each $\theta\in K$. 
By using the the Riesz representation theorem
we conclude that there exists a unique function
$\delta F([\theta])/\delta \theta(x)\in L^q(\Omega)$ 
with $1/q =1- 1/p$ 
such that 
\begin{equation}
F'([\theta])\eta =\int_{\Omega} \frac{\delta F([\theta])}
{\delta \theta(x)}\eta(x)dx.
\end{equation}
By applying Theorem \ref{thm:derivatives_Banach} 
we obtain that 
\begin{equation}
\left|\int_{\Omega} \left(\frac{\delta F([\theta])}
{\delta \theta(x)}-\frac{\delta F([P_m\theta])}
{\delta \theta(x)}\right)\eta(x)dx\right|<\epsilon
\left\|\eta\right\|_{L^p(\Omega)},\qquad \forall 
\eta\in L^p(\Omega)\setminus\{0\}, 
\qquad \forall\theta\in K,
\label{106}
\end{equation}
As is well-known, the norm of the linear functional 
$\left(F([\theta])-
F([P_m \theta])\right)\eta$ (linear functional of 
$\eta\in L^p(\Omega)$) is 
\begin{equation}
M([\theta])=\left\| \frac{\delta F([\theta])}{\delta 
\theta(x)}-\frac{\delta F([P_m\theta])}
{\delta \theta(x)}\right\|_{L^q(\Omega)}.
\end{equation}
By definition $M([\theta]$) is the smallest number such that 
\begin{equation}
\left|\left(F([\theta])-F([P_m \theta])\right)\eta\right|\leq 
M([\theta])\left\|\eta\right\|_{L^p(\Omega)}.
\end{equation}
This fact, together with \eqref{106} allow us to conclude that 
$M([\theta])<\epsilon$ for all $\theta\in K$. This proves the theorem.}

\end{proof}

\noindent
{A few comments on Lemma \ref{FunctionalDerivativeApprox_BANACH} 
are necessary at this point. First, compact 
subsets of $L^p(\Omega)$ are identified by the 
equicontinuity and the equitight conditions in Theorem \ref{lemma:subsetsLp}. Second, we excluded the 
case $p=1$ as the Banach space $L^1(\Omega)$ does not 
admit a basis.}
{Regarding approximation 
of FDEs in real Banach spaces with a basis, 
we have the following results. 
\begin{lemma}
\label{thm:consistency_Banach}
{\bf (Consistency of cylindrical approximations to FDEs)}
Let $X$ be a real Banach space with 
a basis. Consider a functional $F\in \mathcal{F}(X)$ 
and a densely defined  closed linear operator 
$\mathcal{L}\in \mathcal{C}(\mathcal{F})$. 
If $\mathcal{L}([\theta])F([\theta])$ is 
continuous in $\theta$, then the 
sequence of operators $\{\mathcal{L}_m\}$ 
defined in \eqref{decomposition} is 
consistent with $\mathcal{L}$ on every 
compact subset $K$ of $X$, provided $
\left\|R_m([\theta])\right\|\rightarrow 0$
as $m\rightarrow \infty$ for all $\theta\in K$.
\end{lemma}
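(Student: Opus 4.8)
The plan is to reproduce the argument used for the Hilbert-space result Lemma \ref{thm:consistency} essentially verbatim, substituting the Banach-space uniform convergence result (Lemma \ref{UCONVHS_BANACH}) for its Hilbert counterpart. First I would start from the decomposition \eqref{decomposition}, which is exactly what \emph{defines} $\mathcal{L}_m$ and the residual $R_m$,
\[
B_m\left(\mathcal{L}([\theta])F([\theta])\right) = \mathcal{L}_m([\theta])F_m([\theta]) + R_m([\theta]).
\]
Solving for $\mathcal{L}_m([\theta])F_m([\theta])$ and inserting the result into the difference $\mathcal{L}([\theta])F([\theta]) - \mathcal{L}_m([\theta])F_m([\theta])$ yields the pointwise identity
\[
\left|\mathcal{L}([\theta])F([\theta]) - \mathcal{L}_m([\theta])F_m([\theta])\right| = \left|\mathcal{L}([\theta])F([\theta]) - B_m\left(\mathcal{L}([\theta])F([\theta])\right) + R_m([\theta])\right|,
\]
which is the exact analogue of \eqref{87}.

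The key step is to control the projection error $\mathcal{L}([\theta])F([\theta]) - B_m(\mathcal{L}([\theta])F([\theta]))$ uniformly over the compact set $K$. Here I would invoke the hypothesis that $\mathcal{L}([\theta])F([\theta])$ is continuous in $\theta$: since this is a continuous functional on a Banach space $X$ possessing a basis, Lemma \ref{UCONVHS_BANACH} applies directly and gives, for every $\epsilon>0$, an index $m_\epsilon$ with
\[
\left|\mathcal{L}([\theta])F([\theta]) - B_m\left(\mathcal{L}([\theta])F([\theta])\right)\right| \leq \epsilon, \qquad \forall m\geq m_\epsilon, \quad \forall \theta\in K.
\]
This is precisely the point at which the separable-Hilbert structure is not needed; all that is required is that $P_m\theta\to\theta$ uniformly on $K$ (the approximation property, Lemma \ref{lemma:ApproxProperty}) together with continuity of the functional.

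Combining the two displays with the triangle inequality yields
\[
\left|\mathcal{L}([\theta])F([\theta]) - \mathcal{L}_m([\theta])F_m([\theta])\right| \leq \epsilon + \left|R_m([\theta])\right|, \qquad \theta\in K,
\]
so that the standing hypothesis $\|R_m([\theta])\|\to 0$ for all $\theta\in K$ forces $\mathcal{L}_m([\theta])F_m([\theta])\to\mathcal{L}([\theta])F([\theta])$ uniformly on $K$. A second application of Lemma \ref{UCONVHS_BANACH} to the continuous functional $F$ gives $F_m\to F$ uniformly on $K$. Both defining conditions of Definition \ref{def:compatibility} are then satisfied, and $\{\mathcal{L}_m\}$ is consistent with $\mathcal{L}$.

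I expect the only genuine subtlety to be that the Banach projection $P_m$ from \eqref{PmBanach} is merely bounded rather than a norm-one orthogonal projection, so the pointwise convergence $P_m\theta\to\theta$ must be upgraded to uniform convergence on $K$ through the approximation property (Lemma \ref{lemma:ApproxProperty}) rather than through the Parseval--Dini route available in the Hilbert case. Once Lemma \ref{UCONVHS_BANACH} is in hand this subtlety is already absorbed, and the remaining manipulations are purely formal and identical to the proof of Lemma \ref{thm:consistency}.
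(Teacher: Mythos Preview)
Your proposal is correct and matches the paper's approach exactly: the paper explicitly omits the proof of Lemma~\ref{thm:consistency_Banach}, stating that it is the same as in the Hilbert case (Lemma~\ref{thm:consistency}) with the Banach-space uniform convergence result Lemma~\ref{UCONVHS_BANACH} substituted for Lemma~\ref{UCONVHS}. Your identification of the only nontrivial change---replacing the Parseval/Dini argument by the approximation property in Lemma~\ref{lemma:ApproxProperty}---is precisely the modification the paper has in mind.
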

\begin{theorem}
\label{thm:FDEconvergence_Banach}
{\bf (Convergence of cylindrical approximations to FDEs)}
Suppose that the initial value problem \eqref{thefde} is 
well-posed in the time interval $[0,T]$ ($T$ finite), 
and that $\mathcal{L}([\theta])\in 
\mathcal{C}(\mathcal{F})$ generates a 
strongly continuous semigroup in $[0,T]$.
Then the FDE approximation \eqref{thepde} is 
stable and consistent (in the sense 
of Definitions \ref{def:compatibility} and \ref{def:stability})
in a compact subset $K$ of a real  Banach 
space $X$ admitting a basis if and only if it is convergent, i.e.,
\begin{equation}
\max_{t\in [0,T]}\max_{\theta\in K}
\left| F_m([\theta],t) - F([\theta],t) \right|
\rightarrow 0 
\label{convergence_Banach}
\end{equation}
as $m\rightarrow \infty$, provided 
$F_m([\theta],0)\rightarrow F_0([\theta])$. 
\end{theorem}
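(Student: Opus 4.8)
The plan is to deduce the stated equivalence from the Trotter--Kato approximation theorem for abstract Cauchy problems in Banach spaces \cite{engel1999one,Guidetti2004}, following the same route as the proof of the Hilbert-space analogue Theorem \ref{thm:FDEconvergence}. The abstract setting is already assembled: by hypothesis $\mathcal{L}([\theta])\in\mathcal{C}(\mathcal{F})$ generates a strongly continuous semigroup on $\mathcal{F}(X)$, the cylindrical operators $\mathcal{L}_m([\theta])$ act on the approximating spaces $\mathcal{F}_m(X)$, and the two scales are linked by the bounded operators $B_m:\mathcal{F}(X)\to\mathcal{F}_m(X)$ of \eqref{Bm}. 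The first task is to record the two structural properties of these connecting maps required by Trotter--Kato: uniform boundedness, $\sup_m\|B_m\|<\infty$, which follows from the uniform boundedness of the canonical projections $P_m$ in \eqref{PmBanach} (the basis constant of $X$ bounds $\sup_m\|P_m\|$); and strong convergence, $B_mF\to F$ for every $F\in\mathcal{F}(X)$, which is exactly the uniform-on-compacta statement of Lemma \ref{UCONVHS_BANACH} together with the approximation property of Lemma \ref{lemma:ApproxProperty}.

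With the connecting maps fixed, I would verify the forward implication by checking the two hypotheses of Trotter--Kato. Stability is assumed outright for \eqref{thepde} through Definition \ref{def:stability}, i.e. $\|e^{t\mathcal{L}_m}\|\le Me^{\omega t}$ with $M,\omega$ independent of $m$. Consistency in the sense of Definition \ref{def:compatibility} is supplied by Lemma \ref{thm:consistency_Banach}: for each $F$ in the domain of $\mathcal{L}$ one has $B_mF\to F$ and $\mathcal{L}_mB_mF\to\mathcal{L}F$ uniformly on the compact set $K$, so the operators converge on a core. Under the generation hypothesis, Trotter--Kato then yields strong convergence of the approximating semigroups, $e^{t\mathcal{L}_m}B_m\to e^{t\mathcal{L}}$ uniformly for $t\in[0,T]$. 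Applying both sides to the initial data and using $F_m([\theta],0)\to F_0([\theta])$ converts semigroup convergence into convergence of solutions, and a final application of Lemma \ref{UCONVHS_BANACH} upgrades this to the uniform-in-$\theta$ estimate \eqref{convergence_Banach} on $K$.

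The converse is likewise contained in the Trotter--Kato equivalence under the generation hypothesis. If the scheme converges in the sense of \eqref{convergence_Banach}, then the convergent operator family $\{e^{t\mathcal{L}_m}B_m\}$ is pointwise bounded on $[0,T]$ and hence, by the uniform boundedness principle, uniformly bounded there; this is precisely the stability bound of Definition \ref{def:stability}. Consistency is then recovered from the resolvent characterization underlying the equivalence in \cite[p.~210]{engel1999one}. Thus convergence implies both stability and consistency, closing the equivalence.

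I expect the main obstacle to be bookkeeping at the interface between the paper's definitions and the precise hypotheses of Trotter--Kato, rather than any new analytic difficulty. Concretely, one must confirm that Definition \ref{def:compatibility} (existence of \emph{some} approximating sequence with $\mathcal{L}_mF_m\to\mathcal{L}F$) matches the core-convergence hypothesis of \cite[p.~210]{engel1999one}, and that the orthogonal-projection arguments of the Hilbert-space proof survive when orthogonality and Parseval monotonicity are unavailable. This last point is exactly where the Banach setting departs from the Hilbert one: the monotone convergence of $\|\theta-P_m\theta\|$ is replaced by the non-monotone but still uniform approximation property of Lemma \ref{lemma:ApproxProperty}, and the uniform bound $\sup_m\|P_m\|<\infty$ coming from the basis constant plays the role previously served by $\|P_m\|=1$. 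Once these substitutions are in place, the Hilbert-space argument transfers without further change.
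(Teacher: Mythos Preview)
Your proposal is correct and follows the same approach as the paper: both defer the result to the Trotter--Kato approximation theorem in \cite[p.~210]{engel1999one}. The paper itself offers no argument beyond that citation, so your sketch---verifying the connecting-map properties via Lemmas \ref{lemma:ApproxProperty} and \ref{UCONVHS_BANACH}, then invoking Trotter--Kato for both directions---is in fact more detailed than what the paper provides.
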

The proof of this theorem can be found 
in \cite[p. 210]{engel1999one}. 
We emphasize that the sequence of steps to 
prove convergence of functional approximations 
to FDEs in Banach spaces admitting a basis is 
the same a)-c) listed after 
Theorem \ref{thm:FDEconvergence}.}

\section{Numerical examples}
\label{sec:numerics}

In this section we provide numerical demonstrations 
of the approximation theorems we developed 
for nonlinear functionals and functional differential 
equations. To this end, we consider the function space 
defined by the following closure 
of a Sobolev sphere with raidius $\rho$
\begin{equation}
K=\overline{\left\{\theta\in H_p^{s}([0,2\pi]): 
\left\|\theta\right\|_{H_p^s} \leq \rho \right\}}\subseteq L_p^2([0,2\pi]). 
\label{periodicFspace}
\end{equation}
We have seen in section \ref{sec:functionalonmetric} 
that $K$ is a convex compact subset of $L_p^2([0,2\pi])$. 
Hence, any real-valued continuous functional 
$F([\theta])$ defined on $K$ can be represented 
as the limit of a uniformly convergent sequence 
of functionals of the form $F([P_m \theta])$, 
where $P_m$ is the projection operator \eqref{proj_op}.
We can sample elements from \eqref{periodicFspace} 
by taking a truncated Fourier series of the form  
\begin{equation}
\theta(x)=\sum_{k=-N}^N c_ke^{ikx},\qquad c_k=c_{-k}^*,
\label{FS}
\end{equation}
and then choosing the modulus of the 
complex numbers $\{c_0,\ldots,c_N\}$ within an 
ellipsoid in $\mathbb{R}^{N+1}$. In fact, we have 
\begin{align}
\left\|\theta\right\|^2_{H_p^s}=&
\sum_{k=-N}^N (1+k^2+k^4+\cdots k^{2s}) 
\left| c_k \right|^2 \nonumber\\
=& c_0^2 + 2\sum_{k=1}^N (1+k^2+k^4+\cdots k^{2s}) \left|c_k\right|^2.
\label{HNRM}
\end{align}
Hence, the condition $\left\|\theta\right\|^2_{H_p^s}\leq \rho^2$ implies that 
\begin{equation}
c_0^2 + 2\sum_{k=1}^N (1+k^2+k^4+\cdots k^{2s}) \left|c_k\right|^2\leq \rho^2,
\end{equation}
which defines the interior of an ellipsoid in the 
variables $\{|c_0|,\ldots,|c_N|\}$.

\subsection{Generation of test functions with prescribed Fourier spectrum}
The decay rate of the modulus of the Fourier coefficients 
$\left|c_k\right|$ in the series expansion 
\eqref{FS} is related to the degree 
of smoothness of $\theta$, i.e., the value of 
$s$ in \eqref{periodicFspace} (see \cite[\S 2]{spectral}). 
Hence, by sampling $\theta$ from a space of periodic 
functions with a prescribed spectral decay $\left|c_k\right|$ 
we can study the effects of  the regularity parameter 
$s$ in \eqref{periodicFspace} on 
the rate of convergence of the nonlinear functional 
approximations we developed in section 
\ref{sec:finiteDimapprox}, section \ref{sec:F} and 
section \ref{sec:FD}.
To sample test functions from \eqref{periodicFspace}, 
we represent $c_k$ in \eqref{FS} in polar form, prescribe 
the decay of the amplitudes 
$|c_k|$ ($k\geq 0$) and introduce a uniformly distributed 
random shift $\vartheta_k\in[0,2\pi]$ subject to the constraint 
$\vartheta_k=-\vartheta_{-k}$. This yields
\begin{equation}
\theta(x) = c_0 + \sum_{\substack{k=-N\\k\neq 0}}^N |c_k| e^{i(kx+\vartheta_k)}, \qquad c_k=c_{-k}^*.
\label{samples}
\end{equation}
We study two types of decay rates of the 
Fourier spectrum.  The first is a 
power-law decay of the form
\begin{equation}
c_0=a(0), \qquad |c_k| = \frac{a(k)}{k^\alpha} \qquad 
\text{(algebraic decay)}, 
\label{powerlaw}
\end{equation}
where $\alpha\geq 1$ and $k=1,\ldots, N$.
In equation \eqref{powerlaw} $a(0)$ is a uniformly 
distributed random variable in $[-10,10]$ 
and $\{a(1),\ldots, a(N)\}$ is a sequence of 
i.i.d. uniformly distributed random variables in $[0,10]$. 
The algebraic decay \eqref{powerlaw} defines 
functions in a Sobolev sphere \eqref{periodicFspace} 
with index $s=\alpha$. The radius of such sphere 
can be computed by substuting \eqref{powerlaw}
 into \eqref{HNRM}, and 
then evaluating the supremum.
The second power spectrum we consider 
has an exponential decay of the form
\begin{equation}
c_0=b(0), \qquad |c_k| = \frac{b(k)}{\beta^k} \qquad 
\text{(exponential decay)},
\label{exponential}
\end{equation}
where $\beta > 1$ and $k=1,\ldots, N$.
The random sequence $\{b(0),b(1),\cdots, b(N)\}$ has 
the same properties as the sequence 
$\{a(0),a(1),\ldots, a(N)\}$ in \eqref{powerlaw}. 
The spectrum \eqref{exponential} defines 
 functions in a Sobolev sphere \eqref{periodicFspace}
with index $s\rightarrow \infty$.

In Figure \ref{fig:inputsignals} we plot one sample of the 
random spectra \eqref{powerlaw} and \eqref{exponential}, 
together with the corresponding sample functions 
\eqref{samples} for $N=500$,
$\alpha \in \{1.5, 2, 2.5, 3\}$ and 
$\beta \in \{1.2, 1.5, 2, 3\}$.  
In the numerical examples presented hereafter we 
choose $N$ large enough so that the contribution of 
the tail of the spectrum is negligible in the series expansion 
\eqref{samples}. This allows us to generate
highly accurate approximations of $\theta$ in 
the space \eqref{periodicFspace}, 
which will then be projected onto a lower-dimensional 
subspace generated by a second trigonometric basis.
\begin{figure}[t]
\centerline{\footnotesize\hspace{0.4cm}Power law decay $|c_k|\sim 1/k^\alpha$ \hspace{4.5cm} Exponential decay $|c_k|\sim 1/\beta^k$}
\centerline{
\includegraphics[height=6cm]{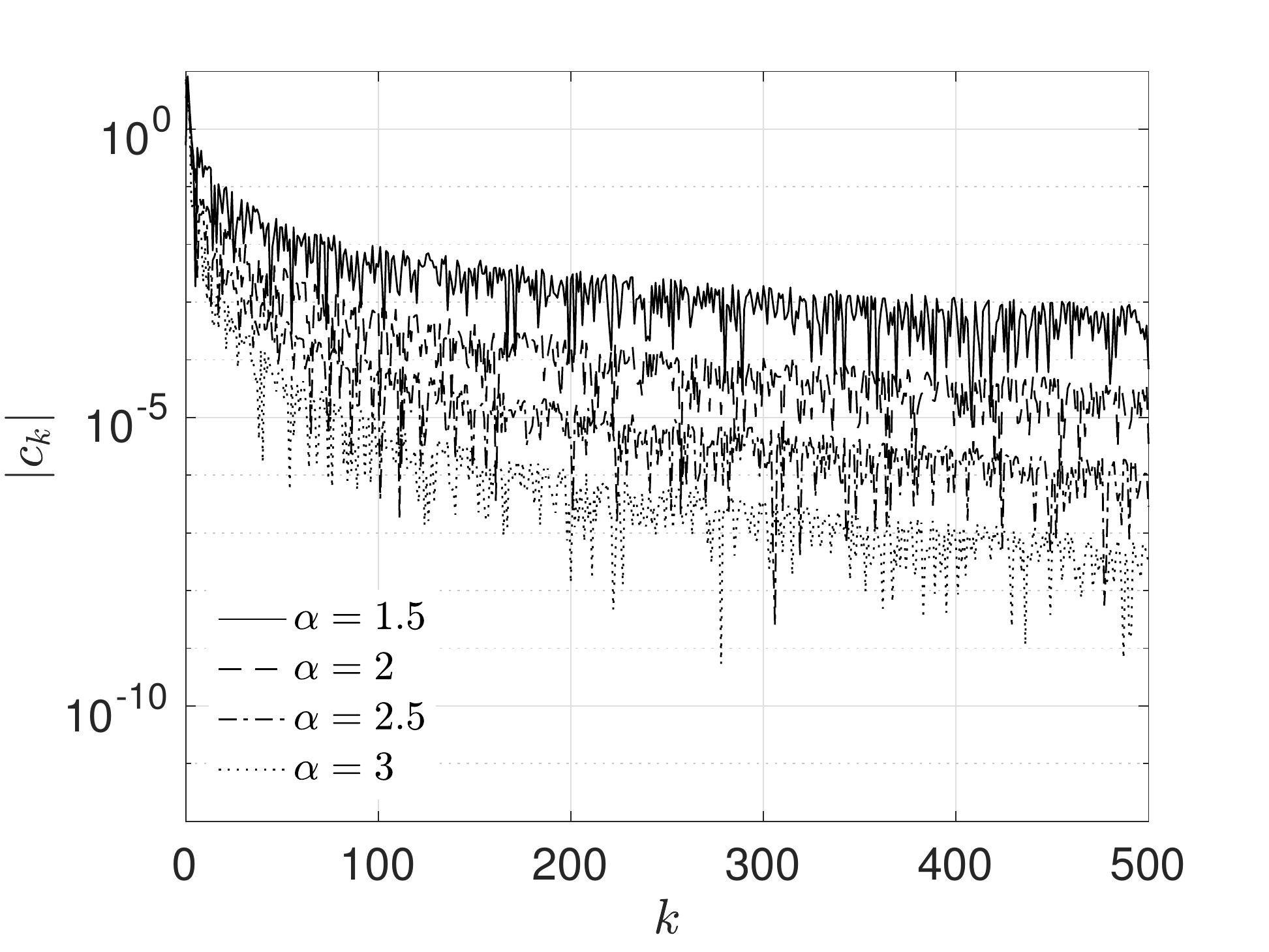}
\includegraphics[height=6cm]{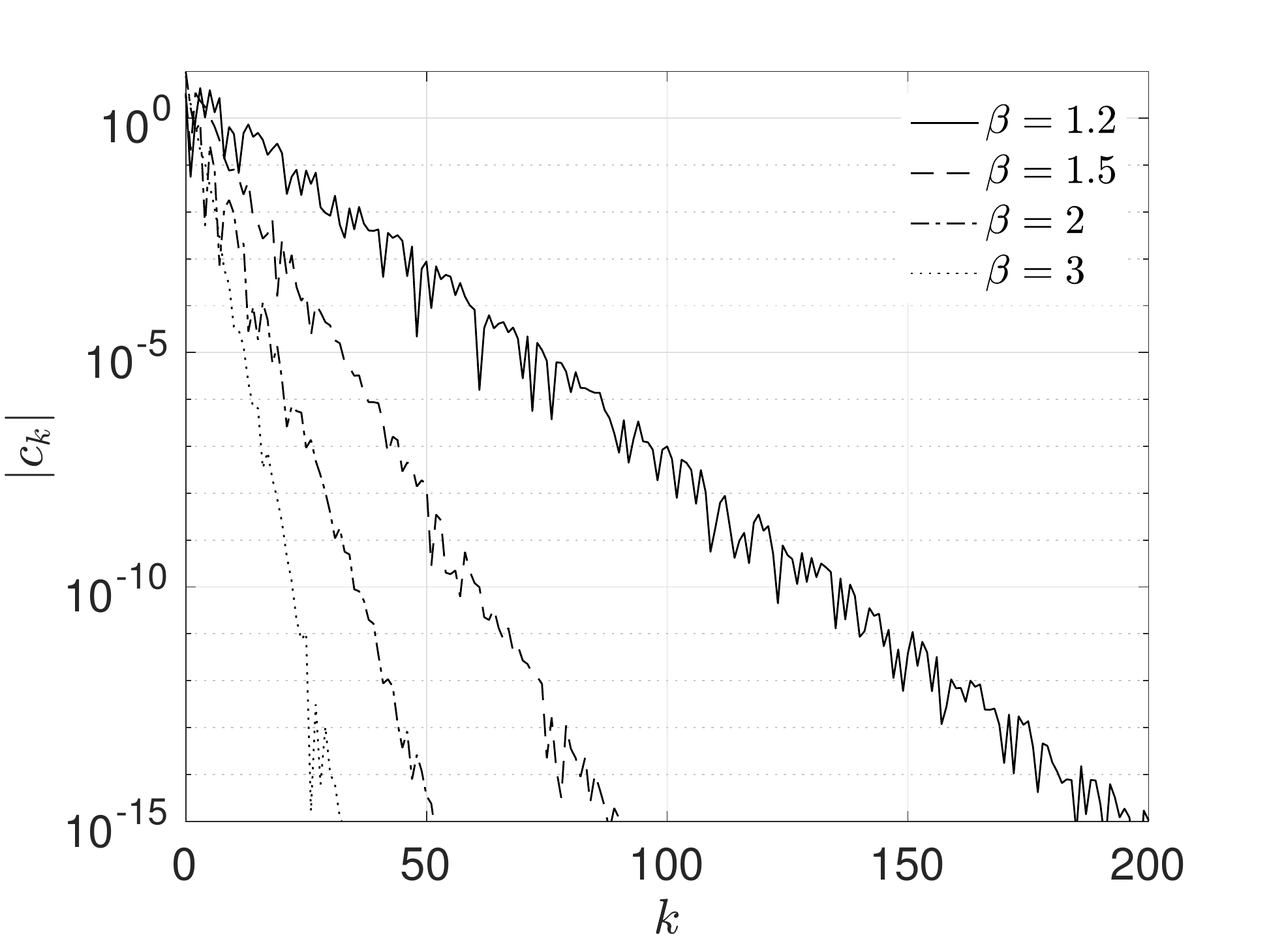}
}\hspace{0.1cm}
\centerline{
\includegraphics[height=6cm]{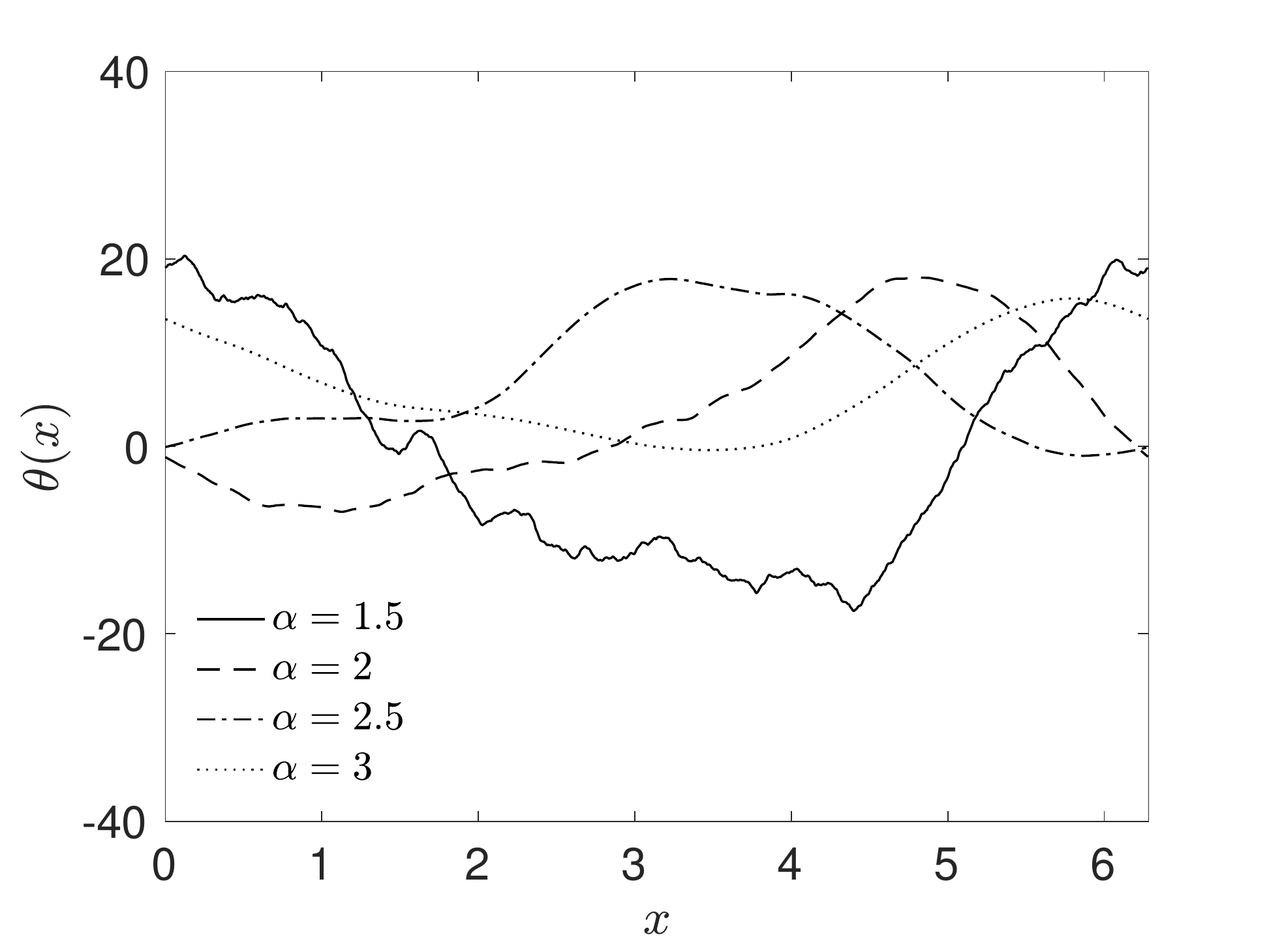}
\includegraphics[height=6cm]{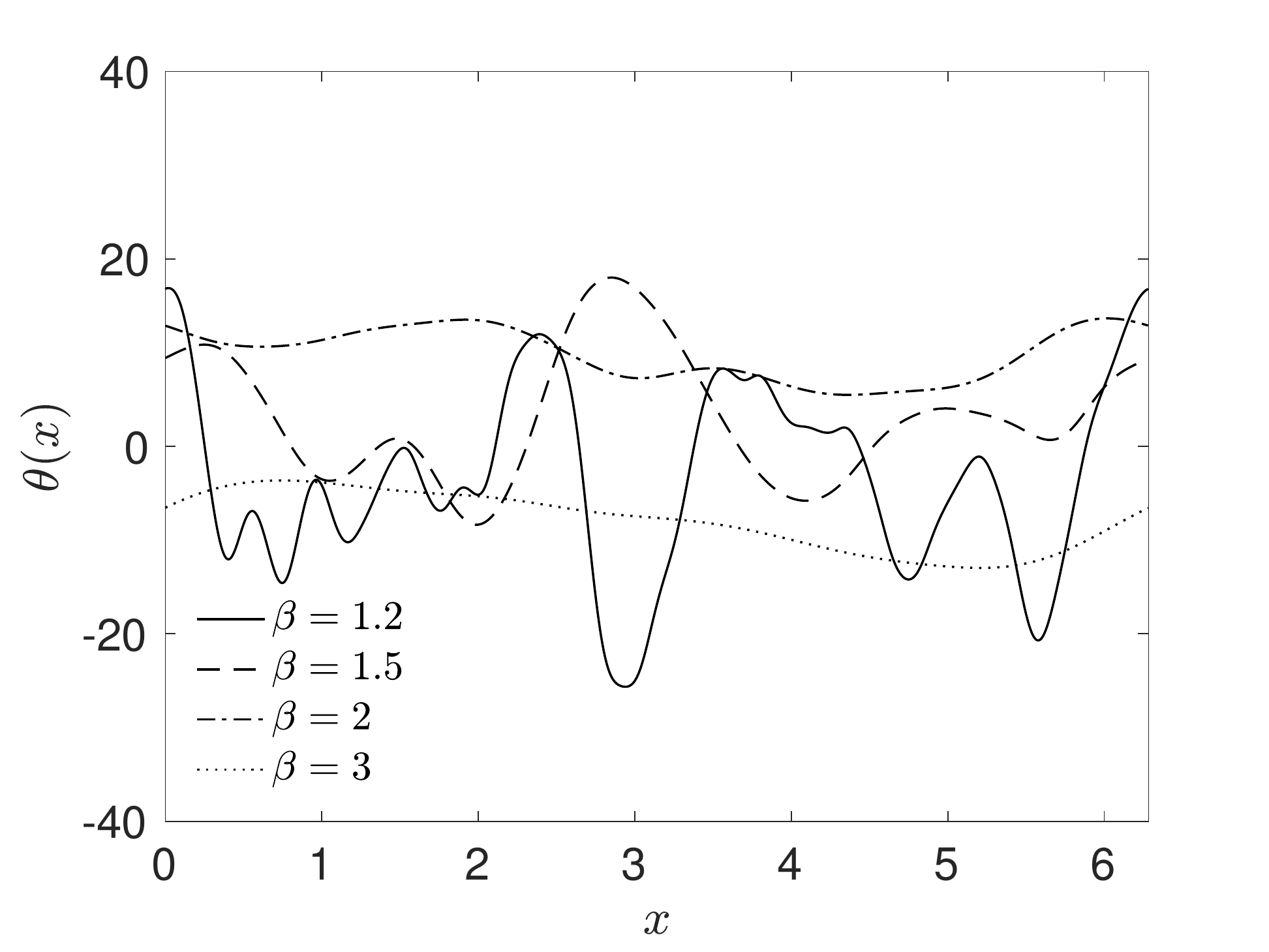}
}
\caption{Random spectra \eqref{powerlaw} and \eqref{exponential}
and corresponding sample functions \eqref{samples} 
for $N=500$.}
\label{fig:inputsignals}
\end{figure}
Specifically, we chose the following orthonormal basis  
consisting of discrete trigonometric polynomials 
\cite[p. 29]{spectral}
\begin{equation}
\displaystyle
\varphi_k(x)= \displaystyle \frac{1}{\sqrt{2\pi(m+1)}}
\frac{\displaystyle \sin\left((m+1)\frac{x-x_{k}}{2}\right)}
{\displaystyle \sin\left(\frac{x-x_{k}}{2}\right)},\qquad 
x_k=\frac{2\pi}{m+1}k,\qquad k=0,\ldots, m\quad \text{($m$ even)}.
\label{trigono}
\end{equation}
which yields the projection operator
\begin{align}
P_{m}\theta = \sum_{k=0}^{m} a_k \varphi_k(x), \qquad a_k = (\theta, \varphi_k)_{L_p^2([0,2\pi])}, \qquad k = 0, 1, \ldots, m.
\label{theta_m}
\end{align}
As is well known,
if the first $(s-1)$ derivatives of $\theta$ 
are all continuous, and if the $s$-th derivative 
is in $L_p^2([0,2\pi])$  
then the $L_p^2([0,2\pi])$ distance  between 
$\theta$ and $P_m\theta$ as defined in \eqref{theta_m} 
decays as $1/m^s$. On the other hand, if $\theta$ is 
of class $C^{\infty}$ then then $P_m\theta$ converges 
to $\theta$ exponentially fast 
in $L_p^2([0,2\pi])$ (see \cite[\S 2.3]{spectral}).

\subsection{Approximation of nonlinear functionals}
\label{sec:nlinF}
Consider the nonlinear functional
\begin{equation}
F([\theta])=\int_0^{2\pi} \sin(x)\sin(\theta(x))^2 dx.
\label{Ftest}
\end{equation}
The Fr\'echet differential of $F([\theta])$ is given by
\begin{equation}
F'([\theta])\eta = \int_0^{2\pi} \sin(x)\sin(2\theta(x))\eta(x) dx, 
\label{GD}
\end{equation}
which is  a linear operator in $\eta$. 
We have shown in 
section \ref{sec:functionalonmetric} that $F'([\theta])$ 
is compact in the function space \eqref{periodicFspace}
and therefore it is bounded and continuous\footnote{Recall 
that a linear functional in a Hilbert space is bounded if 
and only if it is continuous.}. 
In fact, for all $\theta\in K$ and 
$\eta\in L^2_p([0,2\pi])$ it follows from \eqref{GD} that
\begin{equation}
 \left|F'([\theta]) \eta\right|\leq 
 \left\|\sin(x)\sin(2\theta)\right\|_{L_p^2([0,2\pi])}
 \left\|\eta\right\|_{L_p^2([0,2\pi])}\quad \Rightarrow \quad 
 \left\|F'([\theta])\right\| \leq \sqrt{\pi}.
\end{equation}
 Plugging this result into the mean value Theorem \ref{MVT} yields  the spectral convergence result
\begin{equation}
\left| F([\theta])-F([P_m\theta])\right|    \leq \sqrt{\pi}
\left\|\theta-P_m\theta\right\|_{L_p^2([0,2\pi])}                \leq 
\frac{\sqrt{\pi} C}{m^s}\left\|\theta \right\|_{H_p^s([0,2\pi])}   \leq 
\frac{\sqrt{\pi} C\rho}{m^s}, \qquad \forall \theta \in K.
\label{convr}
\end{equation}  
The last two inequalities follow from well-known Fourier
series approximation theory \cite[p. 42]{spectral}, 
and from the fact that $\theta$ is in the closure of a
Sobolev sphere with radius bounded by $\rho$.

Next, we determine the convergence rate of 
the first-order Fr\'echet and functional derivative 
approximations. To this end, we notice that 
the second-order Fr\'echet derivative 
of \eqref{GD}, i.e.,  
\begin{equation}
F''([\theta])\eta\psi = \int_{0}^{2\pi} 2\sin(x)\cos(2\theta(x)) \psi(x)\eta(x) dx, 
\end{equation}
is a continuous bilinear operator 
on the compact set $K \times K$. Therefore, by 
equation \eqref{FD2}, the first-order Fr\'echet derivative 
must converge at the same rate as \eqref{convr}. 
The first-order functional derivative of $F$, i.e., the kernel 
of the integral operator \eqref{GD} is 
\begin{equation}
\frac{\delta F([\theta])}{\delta \theta(x)}= \sin(x)\sin(2\theta(x)).
\label{exactpDF}
\end{equation}
As easily seen, if we evaluate \eqref{exactpDF} at  $P_m\theta$
we obtain the approximated functional derivative 
\begin{equation}
\frac{\delta F([P_m\theta])}{\delta \theta(x)}= \sin(x)\sin\left(2
\sum_{k=1}^m a_k\varphi_k(x)\right),\qquad a_k=(\theta,\varphi_k)_{L_p^2([0,2\pi])},
\label{123}
\end{equation}
which is an element of $L_p^2([0,2\pi])$ that converges 
to \eqref{exactpDF} uniformly in $\theta\in K$ in 
the $L_p^2([0,2\pi])$ norm.  This is because 
bounded and continuous functions 
such as $\sin(x)$ preserve $L^2$ convergence 
under composition (see \cite[Theorem 7]{Bartle}).  
This result is also in agreement with Lemma \ref{FunctionalDerivativeApprox}.
Hereafter we provide a numerical verification 
of the convergence rate we just predicted. 
To this end, in Figure \ref{fig:functionals} we plot 
\begin{equation}
\epsilon_0(m) = \sup_{\theta\in K} |F([\theta])-F([P_m\theta])| 
\label{epsilon0}
\end{equation}
versus $m$. The error $\epsilon_0(m)$ is computed numerically 
for each $m$ by taking the maximum over $10^3$ 
sample functions of the form \eqref{samples}, 
with $N=1000$, and different spectra of the 
form \eqref{powerlaw} and \eqref{exponential}
(see Figure \ref{fig:inputsignals}).
\begin{figure}
\centerline{\footnotesize\hspace{0.6cm}Power law decay 
$|c_k|\sim 1/k^\alpha$ \hspace{5cm} Exponential 
decay $|c_k|\sim 1/\beta^k$}
\centerline{
\includegraphics[height=6.3cm]{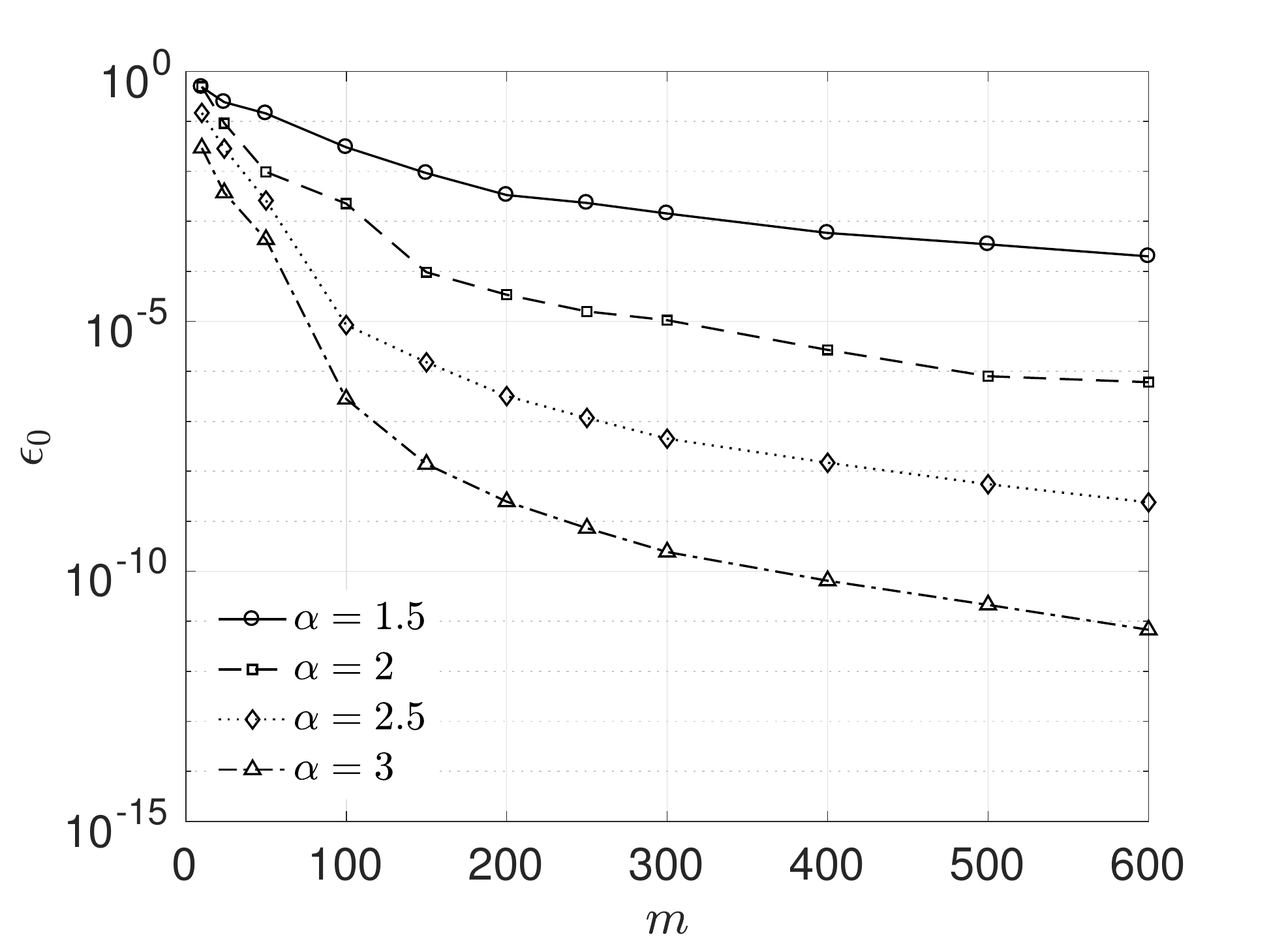}
\includegraphics[height=6.3cm]{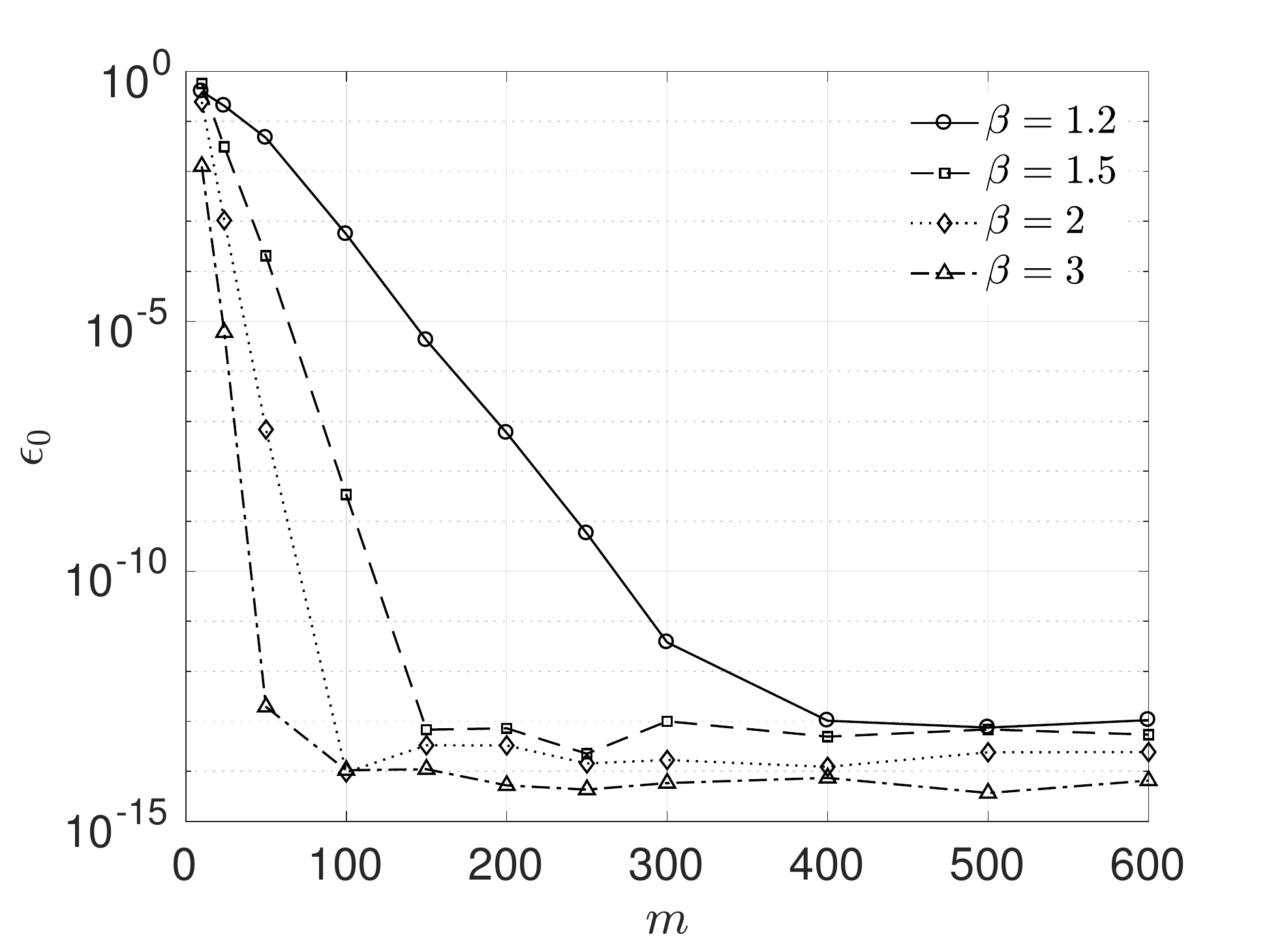}
}
\caption{Functional approximation error \eqref{epsilon0} versus 
the number of Fourier modes in \eqref{theta_m} 
for functions $\theta$ with spectra \eqref{powerlaw} 
(power law decay) and \eqref{exponential} (exponential decay). 
The functional \eqref{Ftest}
is continuously Fr\'echet differentiable. 
Therefore, by Lemma \ref{LMVT} we have that 
$F([P_m\theta])$ 
converges to $F([\theta])$ at the same 
rate as $P_m\theta$ converges to $\theta$.}
\label{fig:functionals}
\end{figure}
The error in the Fr\'echet derivative is defined as
\begin{equation}
\epsilon_1(m) = \sup_{\substack{\eta,\theta\in K\\\eta\neq 0}} 
\frac{\left| F'([\theta])\eta-
F'([P_m \theta])\eta \right|}{\left\|\eta\right\|_{L_p^2([0,2\pi])}},
\label{epsilon1}
\end{equation}
and is computed as follows: 
for each given $\theta$, we determine $P_m\theta$ and then 
approximate the supremum over $\eta$ using $10^3$ 
sample functions $\eta$. This is done for $10^3$ functions 
$\theta$ sampled from $K$ as before. Notice that 
$\eta\in K$ has the same form as $\theta$ and 
therefore it is taken from the same 
ensemble as $\theta$ is taken from. 
The results of our calculations are shown in 
Figure \ref{fig:functional_derivatives}. As expected, 
the approximated functional derivative 
$F'([P_m\theta])$ converges to $F'([\theta])$ at the 
same rate as $P_m\theta$ converges to $\theta$ in 
$L^2_p([0,2\pi])$. The reason is that 
the Fr\'echet derivative 
\eqref{GD} is continuously 
Fr\'echet differentiable\footnote{The 
functional \eqref{Ftest} admits 
continuous Fr\'echet derivatives to any 
desired. In particular, we have 
\begin{align}
F''([\theta])\eta_1\eta_2 = &\,\,
2\int_{0}^{2\pi} \sin(x)\cos(2\theta(x))
\eta_1(x)\eta_2(x)dx,\nonumber\\ 
F'''([\theta])\eta_1\eta_2\eta_3 = &
-4\int_{0}^{2\pi} \sin(x)\sin(2\theta(x))
\eta_1(x)\eta_2(x)\eta_3(x)dx.\nonumber
\end{align}
This implies that the mean value formula 
\eqref{FD2} can applied to any of 
the Fr\'echet derivatives, by simply 
redefining the operator norm appearing 
at the right hand side of the inequality.}, 
and therefore the mean value formula 
\eqref{FD2} holds.
\begin{figure}
\centerline{\footnotesize\hspace{0.6cm}Power law decay 
$|c_k|\sim 1/k^\alpha$ \hspace{5cm} Exponential 
decay $|c_k|\sim 1/\beta^k$}
\centerline{
\includegraphics[height=6.3cm]{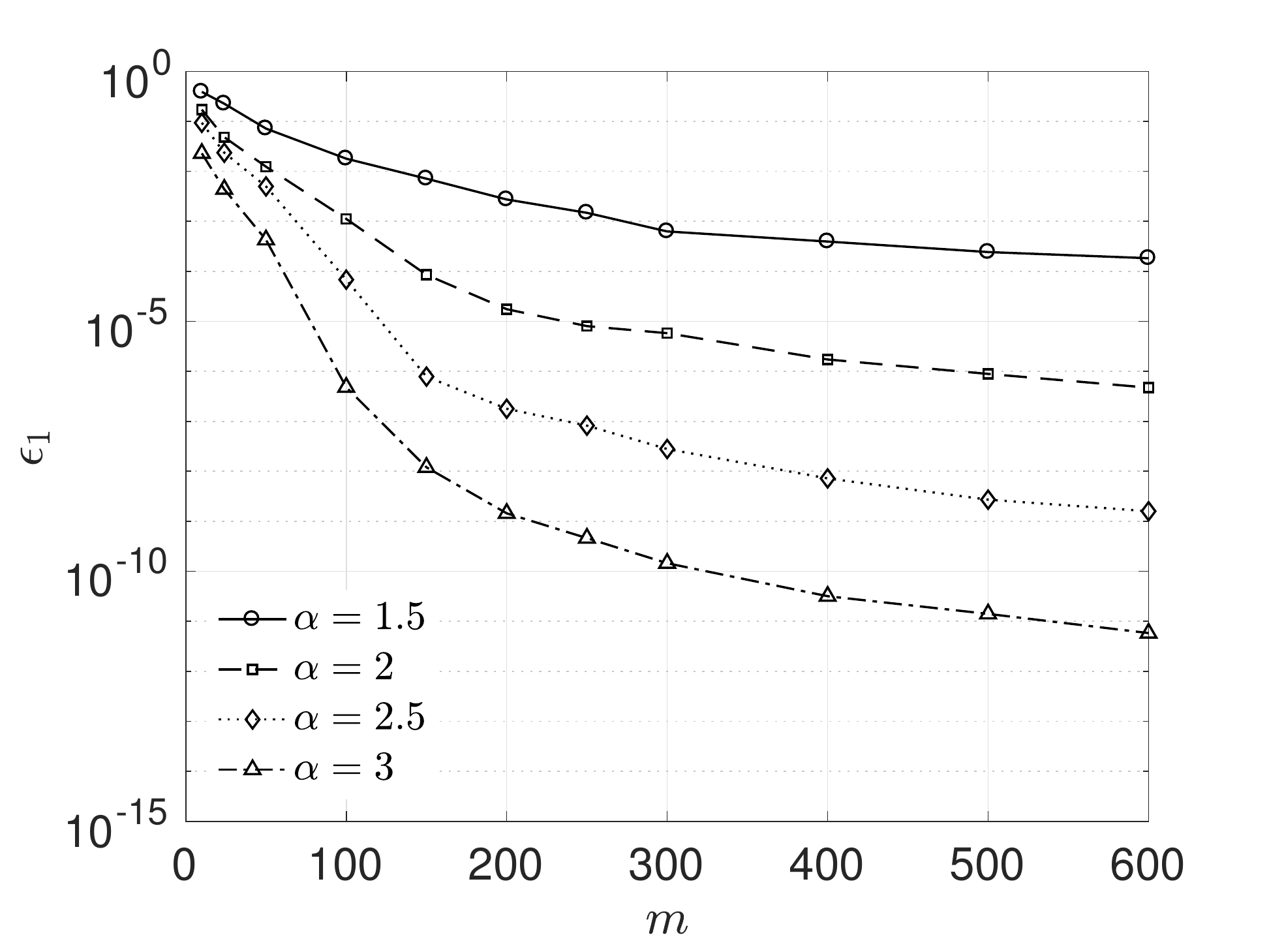}
\includegraphics[height=6.3cm]{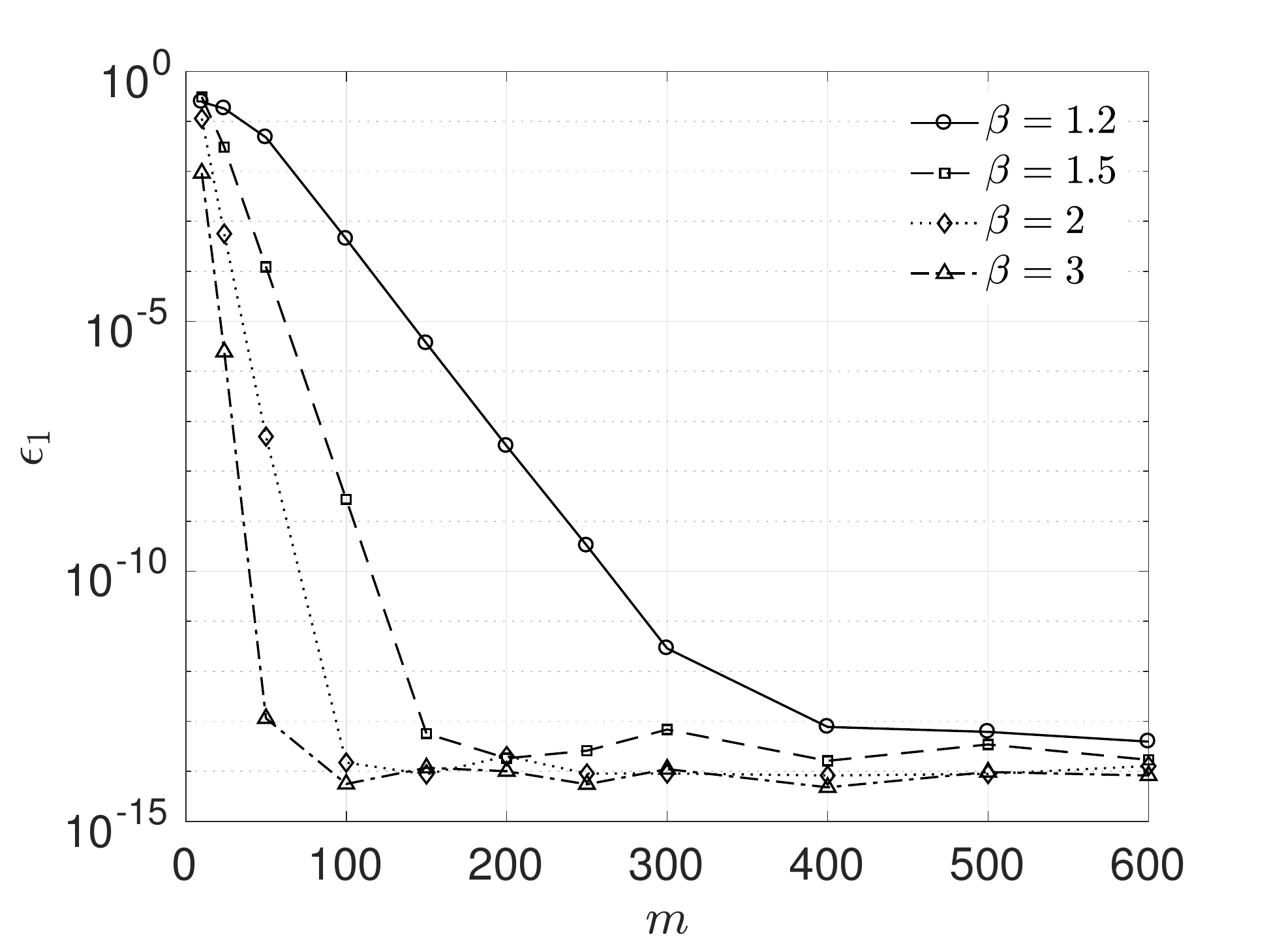}
}
\caption{Fr\'echet derivative approximation 
error \eqref{epsilon1} versus the number of 
Fourier modes in \eqref{theta_m}, and for 
test functions $\theta$ with spectra \eqref{powerlaw} 
(power law decay) and \eqref{exponential} 
(exponential decay). Note that, as expected, the 
approximated Fr\'echet  derivative $F'([P_m \theta])$ 
converges to $F'([\theta])$ at the same 
rate as $P_m\theta$ converges to $\theta$. 
The reason is that the functional 
$F([\theta])$ is continuously Fr\'echet differentiable 
to any desired order. Hence, the mean value formula 
\eqref{FD2} holds. This is also the reason 
why the convergence plots are nearly identical to 
those in Figure \ref{fig:functionals} 
(compare \eqref{FD2} with \eqref{estimateMVT}).}
\label{fig:functional_derivatives}
\end{figure}

\subsection{Approximation of functional differential equations}
\label{sec:FDEconvergenceNumerics}

In this section we provide a simple example of 
convergence analysis that shows how fast 
the solution of the multivariate PDE 
\eqref{example2C} converges to the solution of 
the FDE \eqref{example2} as we send $m$ to infinity. 
To this end, we first examine the analytical solution 
of the FDE \eqref{example2}.

\subsubsection{Analytical solution}
It was shown in \cite[p. 76]{venturi2018numerical} 
that the analytical solution of the FDE \eqref{example2C} in the 
function space \eqref{periodicFspace} is
\begin{equation}
F([\theta],t)= F_{0}([\theta(x-t)]).
\end{equation}
Clearly, if $F_0$ is invariant under 
translations, i.e., if $F_0([\theta(x-t)])=F_0([\theta(x)])$, 
then $F([\theta],t)= F_{0}([\theta])$, i.e., the 
solution is constantly equal to the initial condition 
$F_0([\theta])$ at each time. Examples of such 
translation-invariant functionals  are
\begin{equation}
\widehat{F}_{0}([\theta]) = 
\exp\left[-\int_{0}^{2\pi}\theta(x)^2dx\right], 
\quad \text{and}\quad
\widetilde{F}_{0}([\theta]) = \int_{0}^{2\pi}
\sin\left(\theta(x)\right)^2dx.
\end{equation}
On the other hand, the initial condition 
\begin{equation}
F_{0}([\theta]) = \int_{0}^{2\pi}\sin(x)
\sin\left(\theta(x)\right)^2dx
\label{IC}
\end{equation}
is not translation-invariant. The solution to the initial 
value problem \eqref{example2}, with $F_0$ 
given in \eqref{IC}, is 
\begin{equation}
F([\theta],t) = \int_{0}^{2\pi}\sin(x)
\sin\left(\theta(x-t)\right)^2dx,
\label{FDEsol}
\end{equation}
which is periodic in $t$ with period $2\pi$.
It is easy to verify by direct calculation that \eqref{FDEsol} 
is indeed a solution to \eqref{example2}. To this end, let us 
define $\partial_x=\partial/\partial x$.
We begin by noting that 
\begin{equation}
\theta(x-t)=e^{-t\partial_x}\theta(x).
\label{exptransf}
\end{equation}
The first-order functional derivative of \eqref{FDEsol} 
is obtained by analyzing its  Fr\'echet differential 
\begin{align}
dF_\eta([\theta],t)=&\int_{0}^{2\pi} 2\sin(x)\sin(e^{-t\partial_x}\theta) \cos(e^{-t\partial_x}\theta) e^{-t\partial_x}\eta dx \nonumber\\
=& \int_{0}^{2\pi}  e^{t\partial_x} \left[\sin(x)\sin(2e^{-t\partial_x}\theta) \right]\eta dx. 
\end{align}
Here we utilized the fact that the operator adjoint of the 
semigroup $e^{-t\partial_x}$ relative to standard 
$L_p^2([0,2\pi])$ inner product is $e^{t\partial_x}$.
Hence, the first-order functional derivative of \eqref{FDEsol} is  
\begin{equation}
\frac{\delta F([\theta],t)}{\delta \theta(x)} = e^{t\partial_x}\left[ \sin(x)\sin(2e^{-t\partial_x}\theta)\right].
\end{equation}
Using again the fact that $\partial_x$ is skew-symmetric 
relative to the $L_p^2([0,2\pi])$ 
inner product we obtain   
\begin{align}
\int_{0}^{2\pi} \frac{\partial }{\partial x}
\left(\frac{\delta F([\theta],t)}{\delta \theta(x)}\right)\theta(x)dx = &\int_{0}^{2\pi}  \partial_x e^{t\partial_x} \left[\sin(x)\sin(2e^{-t\partial_x}\theta)\right]\theta dx  \nonumber\\
 = &-\int_{0}^{2\pi}  \sin(x)\sin(2e^{-t\partial_x}\theta)e^{-t\partial_x} \partial_x\theta dx \nonumber\\
 = &-\int_{0}^{2\pi}  \sin(x)\sin(2\theta(x-t)) \partial_x\theta(x-t) dx.
 \label{dFdt}
\end{align}
On the other hand, a temporal differentiation 
of \eqref{FDEsol} yields
\begin{equation}
\frac{\partial F([\theta],t)}{\partial t} = 
\int_{0}^{2\pi} \sin(x)\sin(2\theta(x-t))\partial_t \theta(x-t) dx.
\label{dFdt1}
\end{equation}
By setting the equality between \eqref{dFdt} and 
\eqref{dFdt1} we conclude that \eqref{FDEsol} is 
a solution to \eqref{example2} if and only if
\begin{equation}
\frac{\partial \theta(x-t)}{\partial t}+
\frac{\partial \theta(x-t)}{\partial x}=0,
\label{pdeADV}
\end{equation}
which is clearly an identity, given \eqref{exptransf}. 
This proof can be generalized to arbitrary 
Fr\'echet differentiable initial conditions $F_0$.

\subsubsection{FDE approximation and convergence analysis}
\label{sec:convFDE}
We have seen in section \ref{sec:FDE} that 
the cylindrical approximation of the FDE 
\eqref{example2} yields the multivariate PDE 
\eqref{example2C}. By using integration by parts 
it can be shown that the matrix of coefficients
\begin{equation}
C_{jk}=\int_0^{2pi} \varphi_j(x)
\frac{\partial \varphi_k(x)}{\partial x}dx
\label{Cij}
\end{equation}
is skew-symmetric since the basis 
functions $\varphi_j$ are periodic. 
The initial condition appearing in \eqref{example2C} 
is obtained by evaluating \eqref{IC} on the 
range of $P_m$. This  yields the cylindrical functional 
\begin{equation}
f_0(a) = \int_{0}^{2\pi}\sin(x)
\sin\left(\sum_{k=0}^m a_j\varphi_j(x)\right)^2dx,\qquad 
a_j=(\theta,\varphi_k)_{L_p^2([0,2\pi])}.
\label{HDIC}
\end{equation}
The solution to the initial value problem 
\eqref{example2C} with initial condition given in 
\eqref{HDIC} is obtained as 
\begin{equation}
f(a,t) = f_0\left(e^{tC} a\right), \qquad a=[a_0,\ldots, a_m]^T.
\label{PDEsol}
\end{equation}
We have seen in section \ref{sec:nlinF} that 
\eqref{HDIC} converges uniformly 
to $F_0([\theta])$ as as $m$ goes to infinity 
at the same rate as $\left\|\theta - P_m\theta\right\|_{L_p^2([0,2\pi])}$
goes to zero. We also know that the residual of the 
finite-dimensional PDE approximation \eqref{example2C} 
goes to zero as we send $m$ to infinity (Example 1 
in section \ref{sec:FDE}), and that 
\eqref{example2C}-\eqref{HDIC} 
is stable in the $L^{\infty}$ norm (Example 2 in 
section \ref{thm:consistencyORDER}). By 
Theorem \ref{thm:FDEconvergence} this is sufficient 
to guarantee that \eqref{PDEsol} converges uniformly 
in $\theta$ to the FDE solution
\eqref{FDEsol} as we increase $m$ 
(Theorem \ref{thm:FDEconvergence}).
Hereafter we calculate the convergence rate of 
such approximation, and show that it can be 
exponential depending on degree of
smoothness of $\theta\in K$, which is measured 
by the index $s$ in \eqref{periodicFspace}.
To this end, we begin with 
\begin{align}
\left| F([\theta],t) - 
f(a_0,\ldots, a_m,t)\right|^2 &= 
\left| F_0([e^{-t\partial_x}\theta]) - 
f_0(e^{tC}a)\right|^2\nonumber\\
&=\left|\int_{0}^{2\pi} 
\sin(x)\left[\sin^2\left(e^{-t\partial_x}\theta(x)\right)-
\sin^2\left(\sum_{j=0}^m [e^{tC}a]_j\varphi_j(x)\right)\right]dx
\right|^2. \label{f1}
\end{align}
Recall that for any $a,b\in \mathbb{R}$ 
we have
\begin{equation}
\sin^2(a)-\sin^2(b)= \sin(a+b)\sin(a-b).
\end{equation}
Hence, from equation \eqref{f1} it follows that 
\begin{align}
\left| F([\theta],t) - 
f(a_0,\ldots, a_m,t)\right|^2&\leq 
\int_{0}^{2\pi}\left|\sin\left(e^{-t\partial_x}\theta(x)-\sum_{j=0}^m [e^{tC}a]_j\varphi_j(x)\right)\right|^2dx\nonumber\\
&\leq\int_{0}^{2\pi}\left| e^{-t\partial_x}\theta(x)-\sum_{j=0}^m [e^{tC}a]_j\varphi_j(x)\right|^2dx\nonumber\\
&=\left\| e^{-t\partial_x}\theta(x)-\sum_{j=0}^m [e^{tC}a]_j\varphi_j(x)\right\|^2_{L_p^2([0,2\pi])}.
\label{f2}
\end{align}
At this point, we recall that  
$\theta(x,t)=e^{-t\partial_x}\theta(x)$ is the 
exact solution to the advection equation  \eqref{pdeADV},
while the function $\displaystyle\theta_m(x,t)=\sum_{j=0}^m 
\left[e^{tC}a\right]_j\varphi_j(x)$
is the solution to the Fourier-Galerkin 
discretization of \eqref{pdeADV} 
\begin{equation}
\frac{da_j(t)}{dt}-\sum_{k=1}^m a_k C_{jk}=0.
\label{fgs}
\end{equation}
It is well-known that the 
Galerkin scheme \eqref{fgs} is stable in 
the $L_p^2([0,2\pi])$ norm (see, e.g.,  \cite[\S 6.1.1]{Canuto}), 
and that the solution $\theta_m(x,t)$ converges 
to $\theta(x,t)$ at a rate that depends only on the 
smoothness of $\theta(x,0)$ (initial condition). 
This implies that 
\begin{equation}
\left| F([\theta],t) - 
f(a_0,\ldots, a_m,t)\right|\leq 
\frac{C}{m^s}\left\|\theta \right\|_{H_p^s}
\leq \frac{C\rho}{m^s},
\label{spectralCONV}
\end{equation}
where the parameter $s$ measures the 
regularity of $\theta$. 
If $\theta$ is infinitely differentiable, 
then $f(a_0,\ldots, a_m,t)$ converges to 
$F([\theta],t)$ exponentially fast in $m$.
To validate \eqref{spectralCONV} numerically, 
in Figure \ref{fig:FDE} we plot the error 
\begin{equation}
\epsilon_0(m,t) = \sup_{\theta\in K} \left| F([\theta],t)-
f(a_0,\cdots,a_m,t)\right|
\end{equation}
at time $t=\pi$ in the case where $\theta$ has power law or 
exponential decaying Fourier coefficients. It is seen that 
the cylindrical approximation $f(a_0,\cdots,a_m,t)$ 
indeed converges to $F([\theta],t)$ at the same 
rate at which $P_m\theta$ converges to $\theta$, 
which depends on the smoothness of $\theta\in K$.
It is worthwhile noticing that the convergence plots in 
Figures \ref{fig:functionals}-\ref{fig:FDE} are essentially 
a rescaled version of the same plot. The reason is that 
the FDE solution has continuous Fr\'echet derivatives up 
to any desired order. Hence, by the mean value Theorem 
\ref{MVT}, the convergence slopes are determined by the 
rate at which $\left\|\theta-P_m\theta\right\|_{L_p^2([0,2\pi])}$ 
goes to zero. 
\begin{figure}
\centerline{\footnotesize\hspace{0.7cm}Power law decay $|c_k|\sim 1/k^\alpha$ \hspace{4.8cm} Exponential decay $|c_k|\sim 1/\beta^k$}
\centerline{
\includegraphics[height=6.3cm]{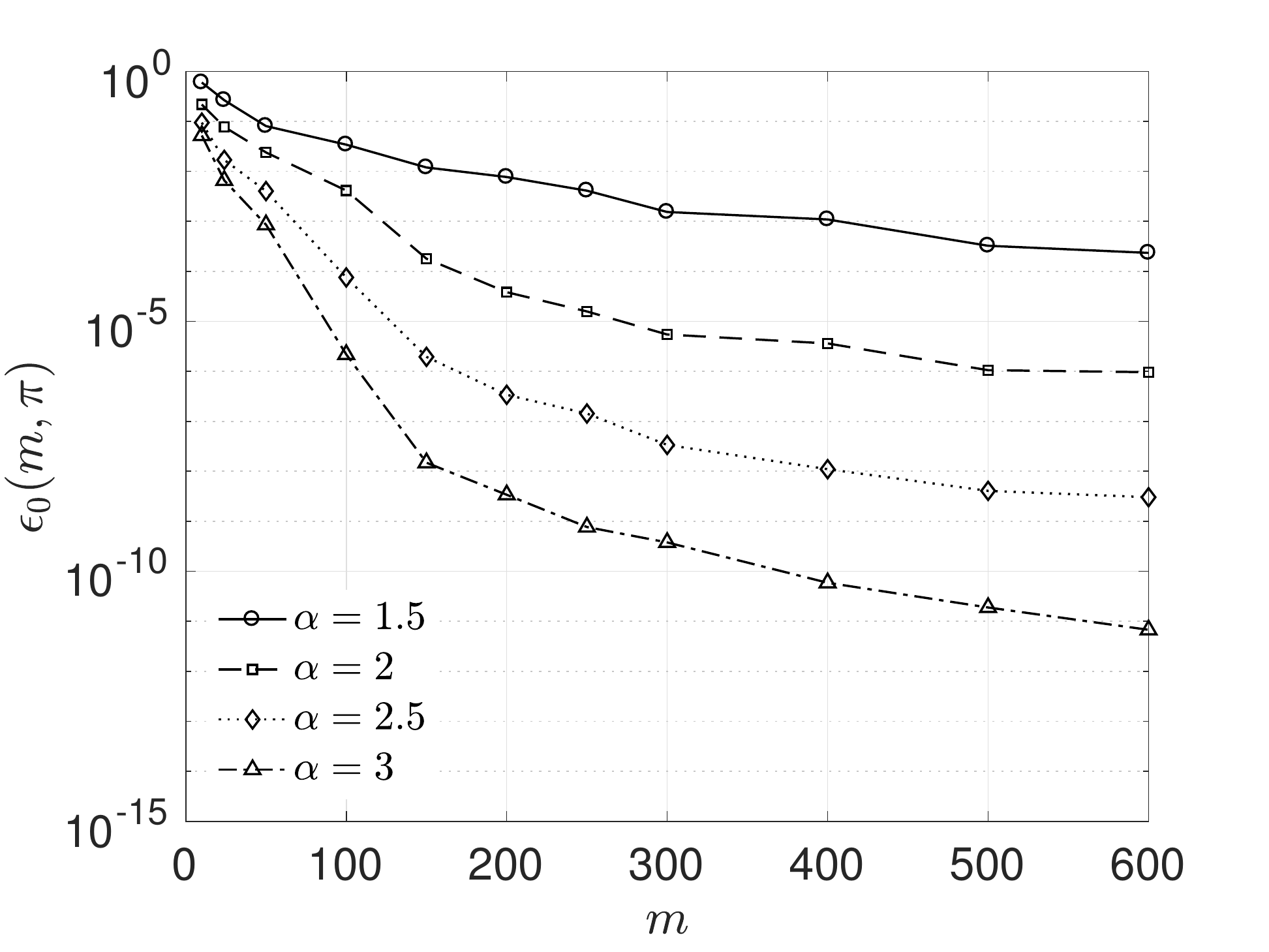}
\includegraphics[height=6.3cm]{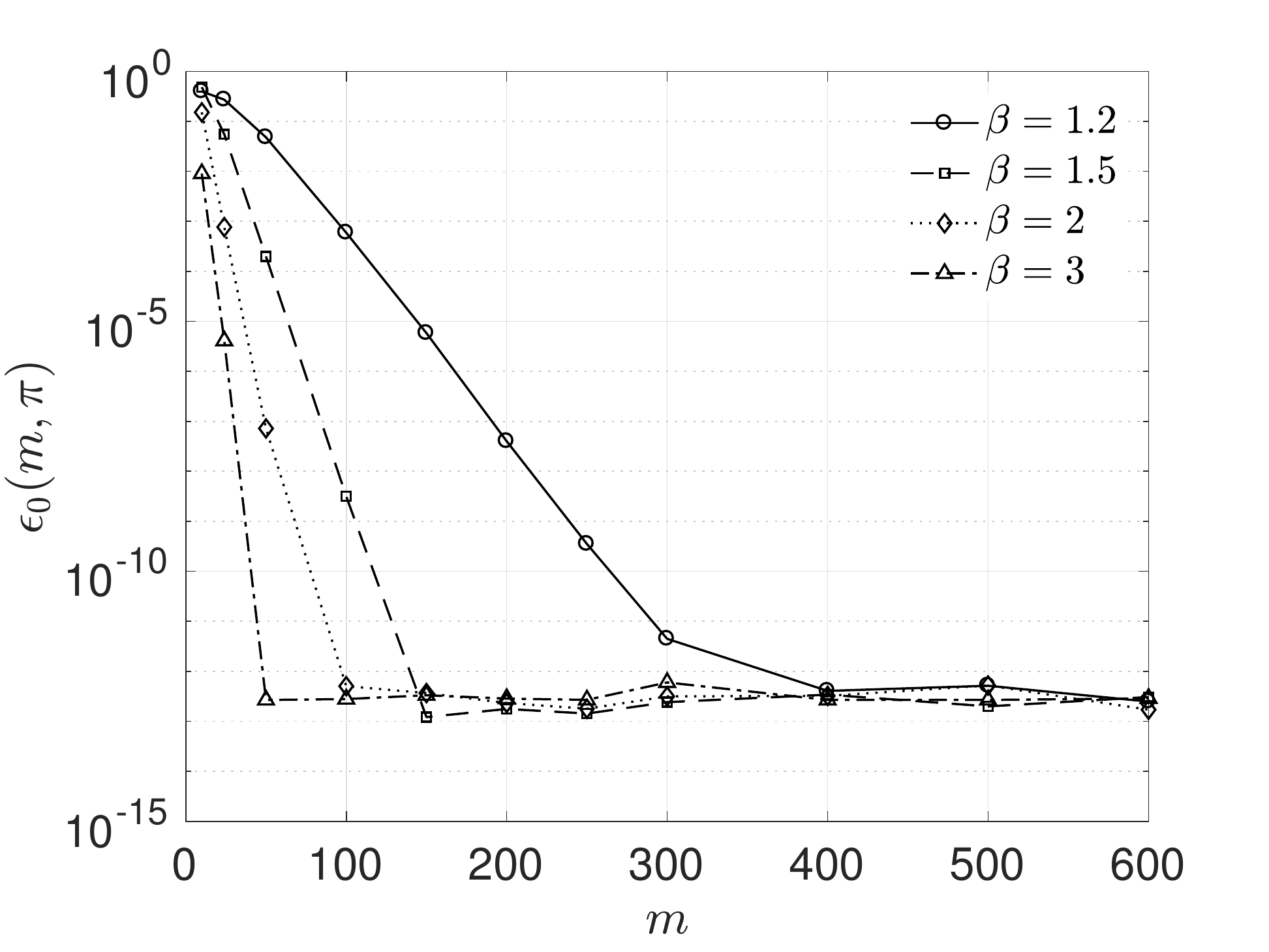}
}
\caption{Convergence of the PDE solution \eqref{PDEsol} 
to the FDE solution \eqref{FDEsol} as we increase the 
number of variables $m$. Note that the convergence 
rate of the PDE \eqref{example2C} to the FDE \eqref{example2} is, 
as before, the same as the convergence 
rate of $P_m\theta$ to $\theta$ in the $L_p^2([0,2\pi])$ 
norm.}
\label{fig:FDE}
\end{figure}

\section{Conclusions}
\label{sec:Conclusion}

We established rigorous convergence 
results for cylindrical  approximations 
of nonlinear functionals, functional derivatives, 
and functional differential equations (FDEs) 
defined on a compact subset of a 
{real Banach space $X$ admitting a basis}. 
Such approximations are 
constructed by restricting the domain 
of the functionals and the FDEs 
to the range of a finite-dimensional 
projection acting on $X$.
In this setting, we proved that 
continuous functionals and FDEs can be 
approximated  by multivariate 
functions and multidimensional 
partial differential equations (PDEs), 
respectively. The convergence rate of 
such functional approximation can be 
exponential, depending on the 
regularity of the functional (in particular 
its Fr\'echet differentiability), and its 
domain. {Rapidly converging
approximations allow us to represent 
nonlinear functionals and FDEs 
in terms of multivariate functions and PDEs
involving fewer independent variables.}
We also provided necessary and sufficient 
conditions for consistency, stability and 
convergence of functional approximations 
schemes to compute the solution of 
linear FDEs. {The main results are 
Theorem \ref{thm:FDEconvergence} and Theorem 
\ref{thm:FDEconvergence_Banach}, which are} 
based on the Trotter-Kato 
approximation theorem for abstract 
evolution equations in Banach spaces. 
The results presented in this paper 
open the possibility to utilize 
techniques for high-dimensional 
function representation such as deep neural 
networks \cite{Raissi,Raissi1,Zhu2019} 
and numerical tensor methods 
\cite{Alec2019,Bachmayr,Bram2019,
parr_tensor,approx_rates,Kolda,Alec2020,Alec2021,Bram2019} to 
approximate nonlinear functionals in 
terms of high-dimensional functions, 
and to compute approximate solutions of 
functional differential equations by 
solving high-dimensional PDEs.

\vspace{0.5cm}
\noindent 
{\bf Acknowledgements} 
This research was  supported by the U.S. Army 
Research Office (ARO) through the 
grant W911NF1810309.
Parts of this paper were completed while 
Daniele Venturi was in residence at the 
Institute for Computational and Experimental 
Research in Mathematics (ICERM) in Providence, RI, 
during the semester program ``Model and 
dimension reduction in uncertain and dynamic 
systems'', supported by the NSF-DMS grant 
1439786. 


\vspace{0.5cm}
\noindent 
{\bf Conflict of interest statement} 
On behalf of all authors, the corresponding author states that there is no conflict of interest.

\appendix

\section{Cylindrical approximation of functional integrals in real separable Hilbert spaces}
\label{sec:functionalintegral}
In this appendix we study approximation of 
functional integrals defined on a real separable 
Hilbert space $H$, with particular 
emphasis on integrals involving cylindrical functionals, 
i.e., functionals of the form \eqref{cyl}. 
This topic was first investigated by Friedrichs 
and Shapiro \cite{Friedrichs,Friedrichs_Book}, 
and it fits the framework of functional approximations
we discussed in section \ref{sec:finiteDimapprox}.
To describe the method, we first recall that $\{P_m\}$ 
is a hierarchical and complete sequence of 
orthogonal projections, 
i.e.,  $P_m\subseteq P_{m+1}$ (meaning that 
the range of $P_m$ is a subset of the range of $P_{m+1}$).  
Following Friedrichs, Shapiro and Sokorohod
\cite{Friedrichs,Friedrichs_Book,Skorohod} (see 
also \cite[Appendix B.1]{venturi2018numerical})
we define the functional integral over $H$ 
relative to a measure $\mu([\theta])$ as 
\begin{equation}
\int_H F([\theta])d\mu([\theta]) = 
\lim_{m\rightarrow \infty} \int_{D_m} F([P_m\theta])d\mu([P_m\theta]), 
\label{fInt}
\end{equation}
{where $D_m$ is defined in \eqref{Dspan}}.
We pointed out in section \ref{sec:finiteDimapprox} that 
$F([P_m\theta])=f(a_1,\cdots, a_m)$ is a $m$-dimensional 
function depending on the variables 
$a_k=(\theta,\varphi_k)_H$, which are the 
coordinates of $\theta$ relative to the orthonormal 
basis $\{\varphi_1,\varphi_2,\ldots,\varphi_m\}$.
The finite-dimensional measure in each subspace 
$D_m$ can be taken, e.g., 
as a Gaussian product measure
\begin{equation}
d\mu([P_m\theta]) = \frac{1}{(\sqrt{2\pi})^{m}}\exp\left[ -\frac{1}{2}\sum_{k=1}^m a_k^2\right]da_1\cdots da_m,
\label{functionalmeasure}
\end{equation}
which is is absolutely continuous \cite[\S 17]{Skorohod}, and 
invariant \cite[Ch. VI]{Friedrichs_Book} under 
unitary coordinate transformations\footnote{Recall that 
the coordinate system of $D_m$ is $(a_1,\ldots,a_m)$, 
and it depends on the choice of the orthonormal basis 
$\{\varphi_1,\varphi_2,\ldots,\varphi_m\}$.} 
in $D_m$. Such transformations
are induced by unitary transformations of the basis 
functions $\{\varphi_1,\varphi_2,\ldots\}$ in $H$.
There is a well-defined integration theory for
\begin{equation}
\int_{D_m} F([P_m\theta])d\mu([P_m\theta]) = 
\frac{1}{(\sqrt{2\pi})^{m}} \int_{\mathbb{R}^m} f(a_1,\ldots, a_m)
\exp\left[ -\frac{1}{2}\sum_{k=1}^m a_k^2\right]da_1\cdots da_m.
\label{fdint}
\end{equation}
At this point we recall that cylindrical functionals $F([P_m\theta])$ 
converge to $F([\theta])$ uniformly in $\theta$, 
if $\theta$ is chosen a compact subset of $H$. 
Also, cylindrical functionals are completely invariant 
in the sense of \cite[Ch V, \S III]{Friedrichs_Book}.
This guarantees that the limit in \eqref{fInt} 
exists, and that the functional integral is well-defined.
This allows us to define the inner 
product between two cylindrical 
functionals $F$ and $G$ as 
(see \cite[\S 5.1]{venturi2018numerical})
\begin{equation}
\left(F,G\right)_{\mathcal{F}}=\lim_{m\rightarrow \infty} 
\int_{D_m} F([P_m\theta])G([P_m\theta])d\mu([P_m\theta]),
\label{IP}
\end{equation}
where $\mathcal{F}$ is the vector space of functionals 
defined on the Hilbert space $H$. The inner product 
\eqref{IP} induces the norm
\begin{equation}
\left\| F([\theta])\right\|^2_{\mathcal{F}}=
\left(F,F\right)_{\mathcal{F}}.
\label{normFUNC}
\end{equation}

\vs
\noindent
{\em Example 1:} Consider the nonlinear functional  
\begin{equation}
F([\theta])=\frac{\pi}{\pi+\left(\theta,\sin(x)\right)_{L_p^2([0,2\pi])}^2}
\label{fin1}
\end{equation}
in the space of square-integrable periodic functions in 
$[0,2\pi]$, i.e., $L^2_p([0,2\pi])$. We are interested 
in computing the functional integral 
\begin{equation}
\int_{L_p^2([0,2\pi])} F([\theta])d\mu([\theta]),
\label{fin2}
\end{equation}
where the measure $d\mu([\theta])$ is the limit of the 
product measure \eqref{functionalmeasure}
as $m\rightarrow \infty$. 
To this end, we first project $\theta$ onto the orthonormal Fourier basis 
\begin{equation}
P_{2m+1} \theta = \frac{a_0}{2\pi}+
\sum_{k=1}^m a_k \frac{\sin(k x)}{\sqrt{\pi}}+
\sum_{k=1}^m b_k \frac{\cos(k x)}{\sqrt{\pi}},
\label{g4}
\end{equation}
where $a_k=\left(\theta,\sin(k x)\right)_{L_p^2([0,2\pi])}/\sqrt{\pi}$ 
and $b_k=\left(\theta,\cos(k x)\right)_{L_p^2([0,2\pi])}/\sqrt{\pi}$, 
($k=1,\ldots,m$).
A substitution of \eqref{g4} into \eqref{fin1} yields
\begin{align}
\int_{D_m} F([P_m\theta])d\mu([P_m\theta]) &=
\frac{1}{\sqrt{2\pi}} \int_{-\infty}^\infty  
 \frac{e^{-a_1^2/2}}{1+a_1^2}da_1\nonumber\\
 &= -\sqrt{\frac{e\pi}{2}}\left(\text{erf}\left(\frac{\sqrt{2}}{2}\right)-1\right),
\label{fdint00}
\end{align}
independently of $m$. Hence, the functional integral 
\eqref{fin2} is 
\begin{equation}
\int_{L_p^2([0,2\pi])} \frac{\pi d\mu([\theta])}
{\pi+\left(\theta,\sin(x)\right)_{L_p^2}^2} = 
-\sqrt{\frac{e\pi}{2}}
\left(\text{erf}\left(\frac{\sqrt{2}}{2}\right)-1\right).
\end{equation}

\section{Distance between function spaces and approximability of nonlinear functionals}
\label{sec:approximability_of_functionals}
A key concept when approximating a nonlinear functional 
$F([\theta])$ by restricting its domain $D(F)$ to a 
finite-dimensional space functions $D_m$ is the 
distance between $D_m$ and $D(F)$. 
Such distance can be quantified in different 
ways (see, e.g., \cite{Pinkus}). For example we 
can define the {\em deviation} of $D_m$ from 
$D(F)$ as
\begin{equation}
E(D_m,D(F)) = \adjustlimits \sup_{\theta\in D(F)}
\inf_{\theta_m\in D_m} \left\|\theta-\theta_m\right\|_{H}.
\label{functionaldeviation}
\end{equation}
The number $E$ measures the extent to which the worst 
element of $D(F)$ can be approximated from $D_m$. One 
may also ask how well we can approximate 
$D(F)\subseteq X$ with  $m$-dimensional subspaces 
of $H$ which are allowed to vary within $H$. 
A measure of such approximation is given by 
the Kolmogorov $m$-width 
\begin{equation}
d_m(H,D(F))= \adjustlimits \inf_{D_m\subseteq H} 
\sup_{\theta\in D(F)}
\inf_{\theta_m\in D_m} \left\|\theta-\theta_m\right\|_{H}, 
\label{knw}
\end{equation}
which quantifies the error of the {best approximation} 
to the elements of $D(F)$ by elements in a vector 
subspace $D_m\subseteq H$ of  dimension at most $m$.
The Kolmogorov $m$-width can be rigorously 
defined, e.g., for nonlinear functionals in Hilbert 
spaces (\cite{Pinkus}, Ch. 4). 
It should be emphasized that for a given domain 
of interest $D(F)$, finding the optimal basis 
spanning $D_m$ and minimizing the deviation 
$E(D,D_m)$ is not an easy task. In some cases, 
however, asymptotic results are available, e.g., 
in the case of periodic Sobolev spaces 
\cite{approx_rates}. 
It is important to emphasize that the approximation 
error and the computational complexity of 
approximating a nonlinear functional 
depends on the domain $D(F)$ and the 
choice of basis $\{\varphi_1,\varphi_2,\ldots\}$ 
spanning $D_m$. In particular, an accurate functional 
approximation may be low-dimensional in one function 
space (i.e., for small $\epsilon$, $m$ is also small) and 
high-dimensional in another (i.e., for small $\epsilon$, 
$m$ must be taken very large) -- see \S 3.1.2 in 
\cite{venturi2018numerical} for examples.

\bibliographystyle{plain}
\bibliography{main}

\end{document}